\newtheorem{theorem}{\bf Theorem}[section]
\newtheorem{lemma}[theorem]{\bf Lemma}
\newtheorem{proposition}[theorem]{\bf Proposition}
\newtheorem{corollary}[theorem]{\bf Corollary}
\newtheorem{remark}[theorem]{\sc Remark}
\newtheorem{example}[theorem]{\sc Example}
\newcommand{\pr }{\mathrm{Pr} }
\newcommand{\prp}{\mathrm{pr} }
\newcommand{\ep}{\epsilon}
\DeclareMathOperator{\frat}{Frat}
\DeclareMathOperator{\PSL}{PSL}
\DeclareMathOperator{\A}{A}
\DeclareMathOperator{\Sym}{S}
\DeclareMathOperator{\PSp}{PSp}
\begin{document}
\title[Commuting probability]{Commuting probability for the Sylow subgroups of a finite group}
\thanks{The first three authors are members of GNSAGA (INDAM), 
and the fourth author was  supported by  FAPDF and CNPq.
}

\author[E. Detomi]{Eloisa Detomi}
\address{Dipartimento di Matematica \lq\lq Tullio Levi-Civita\rq\rq, Universit\`a degli Studi di Padova, Via Trieste 63, 35121 Padova, Italy} 
\email{eloisa.detomi@unipd.it}
\author[A. Lucchini]{Andrea Lucchini}
\address{Dipartimento di Matematica \lq\lq Tullio Levi-Civita\rq\rq, Universit\`a  degli Studi di Padova, Via Trieste 63, 35121 Padova, Italy} 
\email{lucchini@math.unipd.it}
\author[M. Morigi]{Marta Morigi}
\address{Dipartimento di Matematica, Universit\`a di Bologna\\
Piazza di Porta San Donato 5 \\ 40126 Bologna \\ Italy}
\email{marta.morigi@unibo.it}
\author[P. Shumyatsky]{Pavel Shumyatsky}
\address{Department of Mathematics, University of Brasilia\\
Brasilia-DF \\ 70910-900 Brazil}
\email{pavel@unb.br}

\subjclass[2020]{20D20; 20E45; 20P05} 
\keywords{Commuting probability, Sylow subgroups, Simple groups}

\begin{abstract} 
 For subsets $X,Y$ of a finite group $G$, let $\pr(X,Y)$ denote the probability that two random elements $x\in X$ and $y\in Y$ commute.  Obviously, a finite group $G$ is nilpotent if and only if $\pr(P,Q)=1$ whenever $P$ and $Q$ are Sylow subgroups of $G$ of coprime orders. 
 
Suppose that $G$ is a finite group in which for any distinct primes $p,q\in\pi(G)$ there is a Sylow $p$-subgroup $P$ and a Sylow $q$-subgroup $Q$ of $G$ such that $\pr(P,Q) \ge \epsilon$. We show that $F_2(G)$ has $\epsilon$-bounded index in $G$.

If $G$ is a finite soluble group in which for any prime $p\in\pi(G)$ there is a Sylow $p$-subgroup $P$ and a Hall $p'$-subgroup $H$ such that $\pr(P,H)\ge \epsilon$, then $F(G)$ has $\epsilon$-bounded index in $G$.

Moreover, we establish  criteria for nilpotency and solubility of $G$ such as: If for any primes $p,q\in\pi(G)$ the group $G$ has a Sylow $p$-subgroup $P$ and a Sylow $q$-subgroup $Q$ with $\pr(P,Q)>2/3$, then $G$ is nilpotent. If for any primes $p,q\in\pi(G)$ the group $G$ has a Sylow $p$-subgroup $P$ and a Sylow $q$-subgroup $Q$ with $\pr(P,Q)>2/5$, then $G$ is soluble.

  \end{abstract}
 \maketitle

\section{Introduction} 
Given two subsets $X,Y$ of a finite group $G$, we write $\pr(X,Y)$ for the
  probability that random elements $x\in X$ and $y\in Y$ commute. 
The number $\pr(G,G)$ is called the commuting probability of $G$.  It is well-known that $\pr(G,G)\leq5/8$ for any nonabelian group $G$. Another important result is the theorem of P. M. Neumann \cite{neumann} which states that 
 if $G$ is a finite group and $\epsilon$ is  a positive number such that $\pr(G,G)\geq\epsilon$,  
 then $G$ has a normal subgroup $R$ such that both the index $|G:R|$ 
and the order of the commutator subgroup $[R,R]$ are $\epsilon$-bounded  (see also \cite{eberhard}).

Throughout the article we use the expression ``$(a,b,\dots)$-bounded" to mean that a quantity is bounded from above by a number depending only on the parameters $a,b,\dots$.

A number of further results on commuting probability in finite groups can be found in \cite{gr,bgmn}.
There are several recent papers studying $\pr(H,G)$, where $H$ is a subgroup of $G$  (see for example \cite{DS,Erf,nath}).
In particular, it was proved in  \cite{DS}  that if  $H$ is a subgroup of a finite group $G$ and $\pr(H,G)\geq\epsilon>0$, 
 then there is a normal subgroup $T\leq G$ and a subgroup $B\leq H$ such that the indices $|G:T|$ and $|H:B|$, and the order of the commutator subgroup $[T,B]$  are $\epsilon$-bounded. 
 
 The present article grew out of the observation that a finite group $G$ is nilpotent if and only if $\pr(P,Q)=1$ whenever $P$ and $Q$ are Sylow subgroups of $G$ of coprime orders. Here we consider the situation where for any primes $p,q\in\pi(G)$
  with $p\ne q$ there is a Sylow $p$-subgroup $P$ and a Sylow $q$-subgroup $Q$ of $G$ such that $\pr(P,Q) \ge \epsilon$. At the start of the work it was natural to expect that the hypotheses imply that the group $G$ is ``nearly nilpotent" in some sense. In particular, we were hoping to prove that the index of the Fitting subgroup $|G:F(G)|$ is $\ep$-bounded. However, this turned out to be false (see the example in Section 4). On the other hand, we were able to establish the following theorem. Recall that $F_i(G)$ stands for the $i$th term of the upper Fitting series of the group $G$.

\begin{theorem}\label{main1}
 Let $G$ be a finite group, and let  $\epsilon>0$ be a real number such that for any distinct primes  $p,q\in\pi(G)$  
 there is a Sylow $p$-subgroup $P$ and a Sylow $q$-subgroup $Q$ of $G$ such that $\pr(P,Q) \ge \epsilon$.
Then $F_2(G)$ has $\epsilon$-bounded index in $G$.
\end{theorem}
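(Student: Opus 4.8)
The plan is to control, modulo $F(G)$, the two independent sources of non-nilpotency in $G$: the non-abelian composition factors, and the faithful action of the top quotient on the Fitting subgroup. Two elementary facts drive every reduction. First, for subgroups $A,B\le G$ one has
$$\pr(A,B)=\frac{1}{|A|}\sum_{a\in A}\frac{|C_B(a)|}{|B|},$$
so that whenever $\langle A,B\rangle=B\rtimes A$ the number $\pr(A,B)$ is exactly the average proportion of $B$ fixed by an element of $A$. Second, for $N\trianglelefteq G$ a count of the (equal-sized) fibres of the map $A\times B\to(AN/N)\times(BN/N)$ shows that $\pr(AN/N,BN/N)\ge\pr(A,B)$; since Sylow subgroups map onto Sylow subgroups, the hypothesis is inherited by every quotient of $G$. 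Consequently I may pass to $\overline G=G/F(G)$, where $F(\overline G)=F_2(G)/F(G)$, and the theorem reduces to showing that $|\overline G:F(\overline G)|$ is $\epsilon$-bounded.

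Next I would bound the non-abelian composition factors of $G$. Let $R$ be the soluble radical and pass to the quotient $\hat G=G/R$ (legitimate, by monotonicity), so that $F^*(\hat G)$ is the socle $S_1\times\cdots\times S_k$, a product of non-abelian simple groups, and $\hat G$ embeds in $\bigl(\prod\aut(S_i)\bigr)\rtimes(\text{permutations})$. Projecting Sylow subgroups of $\hat G$ onto this product and using the multiplicativity $\pr(A_1\times A_2,B_1\times B_2)=\pr(A_1,B_1)\pr(A_2,B_2)$, together with the analogous computation in the relevant wreath products, shows that for a fixed prime pair $\pr$ decays geometrically in $k$; hence $k$ is $\epsilon$-bounded once each factor admits a prime pair with non-commuting Sylow subgroups. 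To bound the individual orders $|S_i|$ I need the stronger input that for every sufficiently large non-abelian simple group $S$ there are primes $p\ne q$ in $\pi(S)$ with $\pr(P,Q)$ small for \emph{all} Sylow $p$- and $q$-subgroups; combined with monotonicity this yields, for a group with an unbounded $\hat G$, a prime pair violating the hypothesis. Thus $|G:R|$ is $\epsilon$-bounded.

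It then remains to treat the soluble radical $R$, for which $F(R)=F(G)$ and $F_2(R)\le F_2(G)$, so that (together with the bound on $|G:R|$) it suffices to bound $|R:F_2(R)|$ in the soluble case. Here $\overline R=R/F(R)$ is soluble, whence $C_{\overline R}(F(\overline R))\le F(\overline R)$ and $\overline R/F(\overline R)$ acts faithfully and (block-wise) coprimely on $A=F(\overline R)/\Phi(F(\overline R))=\bigoplus_r A_r$. For a prime $p$ dividing $|\overline R/F(\overline R)|$, faithfulness forces a Sylow $p$-subgroup $\overline P$ to act non-trivially on some $A_r$ with $r\ne p$; applying monotonicity to a quotient isomorphic to $A_r\rtimes(\overline P/C)$ converts the hypothesis $\pr(\overline P,\overline R_r)\ge\epsilon$ into the statement that the average fixed-point proportion of $\overline P$ on $A_r$ is at least $\epsilon$. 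A bound for faithful coprime linear actions with large average fixed-point ratio then bounds $|\overline P|$; it is precisely faithfulness that excludes the phenomenon exhibited by the example in Section~4. Feeding these Sylow bounds, with the pairwise conditions $\pr(\overline P,\overline Q)\ge\epsilon$, into an induction on the number of primes dividing $|\overline R/F(\overline R)|$ yields the required $\epsilon$-bound.

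I expect the main obstacle to be the simple-group estimate of the second step: proving that in every large non-abelian simple group one can locate a pair of primes for which \emph{no} choice of Sylow subgroups has commuting probability bounded below. This is the only ingredient that genuinely requires the classification, and establishing it uniformly across the families of simple groups — rather than the essentially formal monotonicity reductions or the coprime-action bound — is where the real difficulty lies; a secondary technical point is the careful transfer of the hypothesis to the soluble radical $R$ once $|G:R|$ has been bounded.
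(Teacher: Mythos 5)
Your overall architecture matches the paper's (split $G$ along the soluble radical, bound the semisimple quotient via a simple-group estimate, handle the soluble part through coprime actions on the Fitting subgroup), but two essential ingredients are left unproven, and one of them is also used incorrectly. The first is the simple-group estimate. You rightly identify as the crux the claim that a nonabelian simple $S$ with $\prp^*(S)\ge\epsilon$ has $\epsilon$-bounded order, but you leave it open and assert that it genuinely requires the classification. It does not: the paper's Lemma \ref{lem:simple} proves it elementarily. If the largest prime $p\in\pi(S)$ exceeds $(2-\epsilon)/\epsilon$, then Lemma \ref{lem:easy} forces $|Q_j:C_{Q_j}(P)|\le 2/\epsilon$ for every other prime $p_j$, hence $[P,Q_j]=1$ whenever $p_j>2/\epsilon$; a nontrivial $z\in Z(P)$ then has centralizer of $\epsilon$-bounded index and simplicity gives $|S|\le (|S:C_S(z)|)!$. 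If instead $p\le(2-\epsilon)/\epsilon$, then $|\pi(S)|$ is bounded and a variant of the same argument applied to the largest Sylow subgroup (via Lemmas \ref{lem:H_0} and \ref{lem}) finishes without CFSG. As written, this step of your proposal is simply absent.

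The second gap is in the soluble step. Your plan bounds, for each pair of primes $p\ne r$, the index $|\overline P:C_{\overline P}(A_r)|$, and then runs an induction on the number of primes dividing $|\overline R/F(\overline R)|$. Two problems. (i) Faithfulness of $\overline R/F(\overline R)$ on $F(\overline R)$ does not give faithfulness of $\overline P$ on any single component $A_r$; it only gives $C_{\overline P}(F(\overline R))=1$ when $O_p=1$, so what you actually obtain is $|\overline P|\le\prod_r|\overline P:C_{\overline P}(A_r)|$, a product over \emph{all} primes $r$. (ii) Nothing in your argument bounds the number of such primes, so the induction yields only an $(\epsilon,s)$-bound with $s=|\pi(G)|$ --- precisely the situation of the paper's Remark \ref{oss}, and precisely the phenomenon of the Section 4 example, where each factor $|A:C_A(C_i)|$ equals $2$ yet the product $2^s$ is unbounded. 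The missing ingredient is the paper's Lemma \ref{lem:keysol}: for $p>(2/\epsilon)^{6/\epsilon}$ every Sylow $p$-subgroup already lies in $F(G)$ (proved via Lemma \ref{lem:H_0} and the Frattini/coprime-action argument of Corollary \ref{cor:easy}). Only after passing to $G/F(G)$, where consequently $|\pi(G/F(G))|$ is $\epsilon$-bounded, does the product over primes become controllable; without that reduction your scheme cannot produce a bound depending on $\epsilon$ alone.
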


As a related development, we studied finite groups with the property that $\pr(P,H) \ge \epsilon$ whenever $P$ is a Sylow $p$-subgroup while $H$ is a Hall $p'$-subgroup. To guarantee the existence of the Hall subgroups we focused on soluble groups. As expected, the groups in question turned out to be nearly nilpotent in the sense that the Fitting subgroup has bounded index.

\begin{theorem}\label{main2}
Let $G$ be a finite soluble group, and let  $\epsilon>0$ be a real number such that for any prime $p\in\pi(G)$ there is a Sylow $p$-subgroup $P$ and a Hall $p'$-subgroup $H$ of $G$ such that $\pr(P,H) \ge \epsilon$. Then $F(G)$ has $\epsilon$-bounded index in $G$.
\end{theorem}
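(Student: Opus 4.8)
The plan is to bound $|G:F(G)|$ by induction on $|G|$, reducing first to the case $\frat(G)=1$. Since commuting probability does not decrease under passing to a quotient, i.e. $\pr(\overline X,\overline Y)\ge\pr(X,Y)$ for the images in $G/N$, and since images of Sylow and Hall subgroups are again Sylow and Hall subgroups, the hypothesis is inherited by every quotient of $G$. If $\frat(G)\ne1$ it contains a minimal normal subgroup $N$ of $G$; then $F(G/N)=F(G)/N$, so $|G/N:F(G/N)|=|G:F(G)|$ and the inductive hypothesis applied to $G/N$ gives the required bound. Hence we may assume $\frat(G)=1$, in which case $F:=F(G)$ is abelian, $C_G(F)=F$, $F=\bigoplus_p O_p(G)$ with each $O_p(G)$ elementary abelian, and $G/F$ acts faithfully on $F$. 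It now suffices to bound $|G/F|$.

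Fix a prime $p\in\pi(G)$ and choose $P\in\mathrm{Syl}_p(G)$ and a Hall $p'$-subgroup $H$ with $\pr(P,H)\ge\epsilon$. Put $V_p=O_p(G)$ and $W_p=\bigoplus_{q\ne p}O_q(G)$. Both are normal in $G$; moreover $V_p\le P$ while $W_p\le O_{p'}(G)\le H$. The elementary fact that $|B:C_B(g)|\ge|A:C_A(g)|$ whenever $A\trianglelefteq B$ and $g$ is arbitrary (because $AC_B(g)$ is then a subgroup of $B$) yields, after averaging over $H$ and over $P$ respectively,
\[
\pr(V_p,H)\ge\pr(P,H)\ge\epsilon,\qquad \pr(W_p,P)\ge\pr(P,H)\ge\epsilon .
\]
By the Cauchy--Frobenius formula, for any action of a finite group $A$ on a finite group $W$ one has $\pr(W,A)=k/|W|$, where $k$ is the number of $A$-orbits on $W$. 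Thus $H$ has at least $\epsilon|V_p|$ orbits on $V_p$ and $P$ has at least $\epsilon|W_p|$ orbits on $W_p$, and in both cases the action is coprime.

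The analytic core is a coprime orbit lemma, which I would isolate and prove separately: there is a function $f$ such that if a group $K$ acts faithfully and coprimely on a nontrivial elementary abelian $q$-group $V$ with at least $\epsilon|V|$ orbits, then $|K|\le f(\epsilon)$. Writing $|V|=q^n$ and using $\pr(V,K)=|K|^{-1}\sum_{k\in K}q^{-\dim[V,k]}\ge\epsilon$, together with $\dim[V,k]\ge1$ for every $k\ne1$ (faithfulness and coprimality), a weighting argument shows that an $\epsilon$-bounded proportion of the elements of $K$ satisfy $\dim[V,k]=O(\log_q(1/\epsilon))$; analysing the resulting large common fixed spaces then forces the multiplicities and dimensions of the nontrivial irreducible constituents, and the characteristic $q$ itself, to be $\epsilon$-bounded, whence $|K|\le f(\epsilon)$. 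Applying this lemma to the images of $H$ in $\GL(V_p)$ and of $P$ in $\GL(W_p)$ gives $|H:C_H(V_p)|\le f(\epsilon)$ and $|P:C_P(W_p)|\le f(\epsilon)$ for every $p$.

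It remains to assemble these bounds. Embedding $G/F$ into $\prod_p G/C_G(O_p(G))$, and checking that the number of primes with $O_p(G)\not\le Z(G)$ is $\epsilon$-bounded by the same orbit estimates (each such prime consuming a definite amount of commuting probability), the task reduces to bounding each $|G/C_G(O_p(G))|$. Its $p'$-part is the order of the image of $H$ in $\GL(O_p(G))$, which is at most $f(\epsilon)$ by the previous paragraph. The main obstacle is the $p$-part, namely $|G/F|_p=|P:O_p(G)|$, which records the action of a Sylow $p$-subgroup on $O_p(G)$ itself: this is a non-coprime action to which the hypothesis has no direct access. Here I would exploit solubility: the bound $|P:C_P(W_p)|\le f(\epsilon)$ shows that a subgroup of $P$ of $\epsilon$-bounded index centralizes $W_p$, and, since $C_G(F)=F$ and $O_p(C_G(O_{p'}(G)))=O_p(G)$, the relations among $O_{p'}(G)$, $O_{p',p}(G)$ and $C_G(O_{p'}(G))$ (in the spirit of the Hall--Higman lemmas) should force this centralizing part into $O_p(G)$, collapsing the residual $p$-local action and bounding $|P:O_p(G)|$. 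Extracting control of the $p$-part of $G/F$ from purely coprime information is where I expect the real difficulty to lie.
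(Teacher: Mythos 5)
Your reduction to $\frat(G)=1$, the passage from $\pr(P,H)$ to $\pr(V_p,H)$ and $\pr(W_p,P)$, and the coprime orbit lemma are all sound in spirit: the lemma is true, and the paper proves exactly this kind of statement by combining a generation argument (Lemma \ref{lem:H_0}) with the fact that a coprime action in which every orbit has size at most $m$ yields $[Q,H]$ of $m$-bounded order (Lemma \ref{zero}); your sketch via irreducible constituents would need to be fleshed out, but I do not regard it as the gap. The genuine gap is exactly where you place it: the $p$-part of $G/F(G)$. Your proposed mechanism --- that $C_P(W_p)$, which has $\epsilon$-bounded index in $P$, should be forced into $O_p(G)$ by Hall--Higman-type relations --- is false as stated. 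Already in $G=\Sym_4$ one has $\frat(G)=1$, $F(G)=O_2(G)$ is the Klein four-group, $W_2=1$, so $C_P(W_2)=P$ is the whole dihedral Sylow $2$-subgroup, which is not contained in $O_2(G)$. More structurally, in your setup $C_P(W_p)/O_p(G)$ acts faithfully on the elementary abelian $p$-group $O_p(G)$, and this is a non-coprime action about which the hypothesis $\pr(P,H)\ge\epsilon$ says nothing directly; no amount of information about the centralizers of $W_p$ and $V_p$ inside $F(G)$ itself can bound it.

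The paper's resolution is to avoid the decomposition $F(G)=V_p\oplus W_p$ altogether and instead, for each fixed $p$, pass to $\bar G=G/O_p(G)$. There $O_p(\bar G)=1$, so $F(\bar G)$ is a $p'$-group contained in every Hall $p'$-subgroup of $\bar G$; hence $\pr(\bar P,F(\bar G))\ge\pr(\bar P,\bar H)\ge\epsilon$, the coprime machinery produces a subgroup $C$ of $\bar P$ of $\epsilon$-bounded index with $[F(\bar G),C,C]=1$, coprimality upgrades this to $[F(\bar G),C]=1$, and then $C\le C_{\bar G}(F(\bar G))\le F(\bar G)$ forces $C=1$ because $C$ is a $p$-group while $F(\bar G)$ is a $p'$-group. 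This bounds $|P:O_p(G)|$ outright, with a bound independent of $p$, so only boundedly many primes contribute to $G/F(G)$ and the theorem follows. The point you are missing is that $F(G/O_p(G))$ is in general strictly larger than the image of $F(G)$ (in $\Sym_4$ it has order $3$ while $F(G)/O_2(G)=1$), and it is precisely the coprime action of $\bar P$ on this larger group that controls the $p$-part which your $W_p$ cannot see.
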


A significant part of this paper deals with criteria for nilpotency and solubility of a finite group in terms of the commuting probability of Sylow subgroups. It will be convenient to introduce the following notation. 

For  a finite group $G$ and two sets of primes $\pi_1$ and $\pi_2$ we define  
\[ \prp^*_G (\pi_1, \pi_2) \]
 to be the maximum real number $\ep$ with the property that for every pair of distinct primes $p \in\pi_1$ and $q\in \pi_2$ there exists a  Sylow $p$-subgroup $P$ and a Sylow $q$-subgroup $Q$ of $G$ such that $\pr (P,Q) \ge \epsilon$. Here and throughout, we slightly abuse the standard terminology and assume that $G$ has trivial Sylow $p$-subgroup whenever $p$ does not divide $|G|$.

  If $\pi_1=\{p\}$, for short we will   write $\prp^*_G ( p, \pi_2)$.
  If the set $\pi$ contains the set of prime divisors of $|G|$, that is $\pi(G)\subseteq\pi$, we write 
$\prp^* (G)$ in place of  $\prp^*_G (\pi, \pi)$.

 Theorem \ref{main1} can now be restated in the following form:
 {\it If $G$ is a finite group such that $\prp^*(G) \geq\ep$, then $|G:F_2(G)|$ is $\epsilon$-bounded}.

Note  that for the symmetric group $\Sym_3$ and the alternating group $\A_5$ we have $\prp^* (\Sym_3)=2/3$ and $\prp^* (\A_5)=2/5.$
 Our main results on nilpotency and solubility are as follows.
\begin{theorem}\label{main3}
Let $G$ be a finite group. \begin{enumerate}
\item If $p_1, p_2$ are the smallest prime divisors of $|G|$ and $\prp^*(G)> (p_1 + p_2 - 1)/(p_1p_2)$, then G is nilpotent. In particular, if $\prp^*(G)> 2/3$, then G is nilpotent.
\item If $\prp^*(G)> 2/5$, then G is soluble.
\end{enumerate}
\end{theorem}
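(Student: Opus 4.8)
My plan rests on a single inequality, which I would isolate as a lemma and use as the engine for both parts: if $P$ is a $p$-subgroup and $Q$ a $q$-subgroup of $G$ with $p\ne q$ and $[P,Q]\ne 1$, then $\pr(P,Q)\le (p+q-1)/(pq)$. To prove it I would write $\pr(P,Q)=\frac{1}{|P|\,|Q|}\sum_{x\in P}|Q\cap C_G(x)|$, noting that $Q\cap C_G(x)$ is always a subgroup of $Q$. The subgroup $P_0=P\cap C_G(Q)$ is proper because $[P,Q]\ne 1$, so $|P_0|\le |P|/p$, and each $x\in P_0$ contributes the full $|Q|$; for $x\notin P_0$ the group $Q\cap C_G(x)$ is a proper subgroup of $Q$ and so has order at most $|Q|/q$. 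Combining,
\[
\pr(P,Q)\le \frac{|P_0|}{|P|}+\Bigl(1-\frac{|P_0|}{|P|}\Bigr)\frac1q\le \frac1p+\Bigl(1-\frac1p\Bigr)\frac1q=\frac{p+q-1}{pq}.
\]
Since $(p+q-1)/(pq)=\frac1p+\frac1q-\frac1{pq}$ is strictly decreasing in each of $p,q$, it attains its maximum over pairs of distinct primes dividing $|G|$ at the two smallest such primes $p_1<p_2$.

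For part (1) I would assume $\prp^*(G)>(p_1+p_2-1)/(p_1p_2)$. For any distinct $p,q\in\pi(G)$ the hypothesis yields Sylow subgroups $P,Q$ with $\pr(P,Q)\ge\prp^*(G)>(p_1+p_2-1)/(p_1p_2)\ge (p+q-1)/(pq)$, so the contrapositive of the lemma forces $[P,Q]=1$. Conjugating a centralizing pair shows that in fact \emph{every} Sylow $p$-subgroup centralizes \emph{some} Sylow $q$-subgroup. Fixing one Sylow $p$-subgroup $P$ and letting $q$ range over the remaining primes, $C_G(P)$ then contains a full Sylow $q$-subgroup for every $q\ne p$, so $|G:C_G(P)|$ is a power of $p$; as $C_G(P)\le N_G(P)$ and $|G:N_G(P)|\equiv 1\pmod p$ is a power of $p$, it must equal $1$. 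Thus every Sylow subgroup is normal and $G$ is nilpotent. The ``in particular'' statement follows because $(p_1+p_2-1)/(p_1p_2)\le (2+3-1)/6=2/3$, while if $\pi(G)$ has at most one element then $G$ is already a $p$-group.

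For part (2) I would run a minimal counterexample argument, reducing to an almost simple configuration and then invoking the classification. The basic monotonicity is that for $N\trianglelefteq G$ and Sylow subgroups $P,Q$ of $G$ one has $\pr(P,Q)\le \pr(PN/N,QN/N)$ (commuting pairs can only increase upon passing to a quotient, while the fibres all have the matching size $|P\cap N|\,|Q\cap N|$), and every Sylow subgroup of $G/N$ arises as such an image; hence $\prp^*(G/N)\ge\prp^*(G)$. Taking $G$ non-soluble with $\prp^*(G)>2/5$ of least order, every proper quotient then has $\prp^*\ge\prp^*(G)>2/5$ and smaller order, so by minimality it is soluble. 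Consequently $G$ has a unique minimal normal subgroup $N=\soc(G)$, which is non-abelian with $C_G(N)=1$; thus $N\cong S^k$ for a non-abelian simple group $S$ and $G$ is sandwiched between $N$ and $\aut(N)$.

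The remaining and decisive step is to show that such a $G$ in fact satisfies $\prp^*(G)\le 2/5$, contradicting the assumption; this is the main obstacle. Concretely one must exhibit two primes $p,q$ for which \emph{all} Sylow pairs have commuting probability at most $2/5$. The favourable situation is when no nontrivial $p$-element commutes with a nontrivial $q$-element, for then $\pr(P,Q)=(|P|+|Q|-1)/(|P|\,|Q|)$, which already drops to $2/5$ once the Sylow subgroups are as large as the pair $(2,5)$ in $\A_5$ (where $|P|=4$, $|Q|=5$ give exactly $8/20=2/5$). Since $2\in\pi(S)$ and $|\pi(S)|\ge 3$ by the theorems of Burnside and Feit--Thompson, one expects to locate such a pair in every non-abelian simple group; but making this uniform, controlling the enlargement of Sylow subgroups caused by outer automorphisms and by the wreath action (so as to reduce to $k=1$ and $S\le G\le\aut(S)$), and verifying the bound group-by-group, requires the classification of finite simple groups and is where essentially all the work of part (2) resides.
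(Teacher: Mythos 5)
Your part (1) is correct and follows essentially the paper's route: the same centralizer-counting inequality $\pr(P,Q)\le (p+q-1)/(pq)$ when $[P,Q]\ne 1$ (Lemma \ref{lem:easy}(2) in the paper), the same monotonicity in $p,q$, and the same conclusion that $C_G(P)$ contains a Sylow $q$-subgroup for every $q\ne p$; your normalizer argument for normality of $P$ is a harmless variant of the paper's Schur--Zassenhaus step.

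Part (2), however, has a genuine gap. You reduce to a minimal counterexample with a unique non-abelian minimal normal subgroup $N\cong S^k$ and then state that the decisive step -- exhibiting, in every such group, two primes for which \emph{all} Sylow pairs have commuting probability at most $2/5$ -- ``requires the classification of finite simple groups'' and ``is where essentially all the work resides,'' without carrying it out. That step is the theorem; deferring it to an unexecuted group-by-group check leaves the proof incomplete. It is also not true that CFSG is needed: the paper's argument is elementary. First, the hypothesis $\prp^*$ is inherited by normal subgroups as well as by quotients (Lemma \ref{lem:trivial}(2) applied on each side, since $P\cap N$ is a Sylow subgroup of $N$), so a minimal counterexample is outright simple, not merely almost simple -- this removes your worries about $k>1$ and outer automorphisms. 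Then, by Burnside's $p^aq^b$ theorem, $|G|$ has a prime divisor $q\ge 5$; fixing a Sylow $q$-subgroup $Q$, the inequality $\pr(P,Q)\le (4+5-1)/(4\cdot 5)=2/5$ whenever $|P:C_P(Q)|\ge 4$ forces $|P:C_P(Q)|\le 3$ for every $p\ne q$, so $H=QC_G(Q)$ has index dividing $6$. Since $Z(Q)\ne 1$ would be normal if $H=G$, the subgroup $H$ is proper, $G$ embeds in $S_6$, and one is left with $\A_5$ and $\A_6$, both excluded by direct computation ($\prp^*(\A_5)=2/5$ and $\pr(P_2,P_3)=2/9$ in $\A_6$). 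You would need to supply an argument of this kind (or an actual CFSG-based verification) for your proof to be complete.
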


 The following sufficient condition for the solubility of $G$ involves only the set of odd primes. 

\begin{theorem}\label{main-odd}
Let $G$ be a finite group. If $\prp^*_G (2', 2')>7/15$, then $G$ is soluble. 
\end{theorem}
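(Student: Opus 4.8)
The plan is to argue by contradiction through a minimal counterexample, and to reduce the solubility question to a statement about the prime graphs of finite simple groups. Let $G$ be a counterexample of least order, so $G$ is insoluble while $\prp^*_G(2',2')>7/15$. The first ingredient is that the hypothesis passes to proper quotients. Indeed, for a normal subgroup $M\ne 1$ and any Sylow subgroups $P,Q$ of $G$, the quotient map carries commuting pairs in $P\times Q$ onto commuting pairs in $PM/M\times QM/M$, and each pair downstairs has at most $|P\cap M|\,|Q\cap M|$ preimages; comparing cardinalities gives $\pr(P,Q)\le \pr(PM/M,QM/M)$, hence $\prp^*_{G/M}(2',2')\ge \prp^*_G(2',2')>7/15$. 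By minimality $G/M$ must then be soluble, so every proper quotient of $G$ is soluble. A routine argument now yields that $G$ has a unique minimal normal subgroup $N=S^k$ with $S$ nonabelian simple, that $C_G(N)=1$, and that $G/N$ is soluble; in particular $S^k\le G\le \aut(S)\wr \Sym_k$.

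The second ingredient is an elementary upper bound for $\pr(P,Q)$ when $P,Q$ are a $p$- and a $q$-group. For $x\in P$ the set $C_Q(x)$ is a subgroup of $Q$, proper whenever $x$ does not centralise $Q$, so $|C_Q(x)|\le |Q|/q$ for $x\notin C_P(Q)$. Summing $\pr(P,Q)=|P|^{-1}|Q|^{-1}\sum_{x\in P}|C_Q(x)|$ gives
\[
\pr(P,Q)\ \le\ \frac1q+\frac{|C_P(Q)|}{|P|}\cdot\frac{q-1}{q}.
\]
Thus if $C_P(Q)=1$ then $\pr(P,Q)\le \frac1q+\frac1p\cdot\frac{q-1}{q}=\frac{p+q-1}{pq}$. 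Since $\frac{p+q-1}{pq}$ is decreasing in each variable, over distinct odd primes it is maximal at $\{p,q\}=\{3,5\}$, where it equals $7/15$ — exactly the value attained by $\A_5$. Hence it suffices to produce two distinct odd primes $p,q$ with $C_P(Q)=1$ for \emph{every} Sylow $p$-subgroup $P$ and Sylow $q$-subgroup $Q$ of $G$: this forces $\prp^*_G(2',2')\le 7/15$, the desired contradiction.

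I then look inside the socle. Since a nonabelian simple group is never a $\{p,q\}$-group (Burnside), $S$ has at least two odd prime divisors. The core claim, proved through the classification of finite simple groups, is that every nonabelian simple group $S$ possesses two distinct odd primes $p,q$ that are non-adjacent in its prime graph, i.e. $S$ has no element of order $pq$; equivalently, any Sylow $p$- and Sylow $q$-subgroup of $S$ centralise each other trivially. For $S=\PSL(2,q)$ this is immediate from the three pairwise-coprime orders attached to a Sylow subgroup for the defining characteristic and to the two maximal tori, primes in different classes being non-adjacent; the general case is handled family by family using the known description of the Gruenberg--Kegel graph, with Thompson's minimal simple groups as the base examples. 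Non-adjacency of $p,q$ in $S$ propagates to $N=S^k$: an element of a Sylow $p$-subgroup $P_0$ of $N$ centralising a Sylow $q$-subgroup $Q_0$ must do so coordinatewise, so $C_{P_0}(Q_0)=1$ for all such $P_0,Q_0$.

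The remaining step, which I expect to be the main obstacle, is to transfer $C_{P_0}(Q_0)=1$ from $N$ up to $G$ so that the bound of the second paragraph applies to the Sylow subgroups of $G$ themselves. The clean case is when the chosen primes $p,q$ do not divide $|G/N|$: then the Sylow $p$- and $q$-subgroups of $G$ are already Sylow in $N$, and as $N\trianglelefteq G$ they lie in $N$, whence $C_P(Q)=1$ and we are done. Since $G/N$ is soluble with $\pi(G/N)\subseteq \pi(\mathrm{Out}(S))\cup\pi(\Sym_k)$, the task is to guarantee that two non-adjacent odd primes of $S$ survive outside this bounded set, and to dispose of the residual configurations where they cannot be avoided. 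Here one exploits that $C_G(N)=1$ (no outer element centralises $N$) to bound the effect of field, diagonal and graph automorphisms, and the fact that a nontrivial permutation of the simple factors cannot centralise a generic Sylow subgroup of $S^k$; together with the abundance of non-adjacent odd prime pairs in simple groups of large order, this confines the verification to finitely many small groups, with $\A_5$ — attaining exactly $7/15$ — correctly excluded by the strict inequality in the hypothesis.
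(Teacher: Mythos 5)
There is a genuine gap: your final step is announced rather than proved. You reduce to a monolithic group $G$ with socle $N=S^k$ and then need $C_P(Q)=1$ for \emph{every} Sylow $p$- and Sylow $q$-subgroup of $G$ itself, for some pair of odd primes $p,q$; but you only establish (modulo a case-by-case CFSG verification that you also do not carry out) the corresponding statement inside $N$. The transfer to $G$ -- handling the cases where $p$ or $q$ divides $|G/N|$, where the Sylow subgroups of $G$ are strictly larger than those of $N$ and meet the outer automorphism and permutation parts -- is exactly where you say ``I expect this to be the main obstacle,'' and the concluding paragraph is a plan, not an argument. As written, the proof is incomplete, and completing it along these lines would require substantial work with the Gruenberg--Kegel graphs of all simple groups and their automorphism groups.

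You also missed a simplification that makes all of this unnecessary: the hypothesis is inherited not only by quotients but also by normal subgroups, since $P\cap M$ and $Q\cap M$ are Sylow subgroups of $M$ and $\pr(P\cap M,Q\cap M)\ge\pr(P,Q)$ by monotonicity in each argument (Lemma \ref{lem:trivial}(2)). Hence a minimal counterexample has every proper normal subgroup and every proper quotient soluble, so it is a nonabelian \emph{simple} group, and the entire socle/automorphism analysis disappears. The paper then finishes without the classification: fix the smallest odd prime $q\in\pi(G)$ and a Sylow $q$-subgroup $Q$; for each odd $p\ne q$ your own inequality gives $\pr(P,Q)\le(p+q-1)/(pq)\le 7/15$ whenever $[P,Q]\ne 1$, so every such $P$ centralizes a conjugate of $Q$, whence $|G:QC_G(Q)|$ is a power of $2$. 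A nontrivial $z\in Z(Q)$ then has conjugacy class of $2$-power size, contradicting Burnside's $p^\alpha$-lemma. Your elementary bound in the second paragraph is correct and is precisely the paper's Lemma \ref{lem:remark}; the extremal role of $\{3,5\}$ and of $\A_5$ is also correctly identified. The defect is in the reduction and in the unexecuted endgame, not in the estimates.
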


In some cases the solubility of $G$ can be detected using only the probabilities $\Pr(P,Q)$, where $P$ is a fixed Sylow 2-subgroup and $Q$ ranges over Sylow subgroups of odd order of $G$.

\begin{theorem}\label{main-2-2'} Let $G$ be a finite group. 
 If $\prp^*_G(2, 2')>2/5$, then $G$ is soluble.
\end{theorem}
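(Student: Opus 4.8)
The plan is to argue by contradiction, taking $G$ to be a counterexample of least order, and to reduce to the situation in which $G$ has a unique minimal normal subgroup equal to a direct power of a nonabelian simple group. The first observation is that if $\varphi\colon G\to \bar G$ is a surjective homomorphism, then images of Sylow subgroups are Sylow subgroups and $\pr(\varphi(P),\varphi(Q))\ge \pr(P,Q)$, so the hypothesis $\prp^*_G(2,2')>2/5$ is inherited by every quotient of $G$. Since the class of finite soluble groups is closed under subgroups, quotients and extensions, minimality forces the soluble radical of $G$ to be trivial and $G$ to have a unique minimal normal subgroup $N=S_1\times\cdots\times S_k\cong S^k$ with $S$ nonabelian simple; moreover $C_G(N)=1$, so that $N\le G\le \aut(N)=\aut(S)\wr\Sym_k$. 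By the Feit--Thompson theorem $|G|$ is even, so $2\in\pi(G)$ and the fixed prime $2$ is available.

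The engine of the argument is the centralizer formula
\[
\pr(P,Q)=\frac{1}{|P|\,|Q|}\sum_{x\in P}|C_Q(x)|.
\]
Specializing to the case where $Q$ has prime order $q$ gives
\[
\pr(P,Q)=\frac{1}{q}+\frac{q-1}{q}\cdot\frac{|C_P(Q)|}{|P|},
\]
and in particular, when no nontrivial $2$-element centralizes $Q$, one has $\pr(P,Q)=\tfrac1q+\tfrac{q-1}{q|P|}$. This last quantity is at most $2/5$ as soon as $q\ge 5$ and $|P|\ge 4$, or $q=3$ and $|P|\ge 16$, with equality exactly in the $\A_5$-configuration $q=5$, $|P|=4$. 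Thus the task reduces to a purely group-theoretic statement: for every nonabelian finite simple group $S$ there is an odd prime $q\in\pi(S)$ and a Sylow $q$-subgroup $Q$ whose nontrivial elements are centralized by no involution, with $q$ and $|P|$ large enough that $\pr(P,Q)\le 2/5$ for $P\in\mathrm{Syl}_2(\aut(S))$. A convenient supply of such $q$ comes from primes associated to large (Singer-type) cyclic tori in the groups of Lie type and from the known centralizer data for the alternating and sporadic groups.

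Having fixed $q$ for the socle, I would transfer the bound to $G$ itself. If $k=1$ then $G$ is almost simple and one compares Sylow subgroups of $G$ with those of $\aut(S)$; the centralizer formula is monotone enough that enlarging $P$ to a Sylow $2$-subgroup of $\aut(S)$ and controlling how $q$ meets the outer automorphism group only decreases $\pr(P,Q)$. If $k\ge2$ then the Sylow subgroups of $N$ are direct products over the $k$ simple factors (permuted by $\Sym_k$), so the relevant commuting probability factorizes as a product of the single-factor probabilities and is therefore even smaller; one must still check that the permutation part of the wreath product and the outer automorphisms do not push $\pr(P,Q)$ back above $2/5$. In every case this produces an odd prime $q\in\pi(G)$ for which $\pr(P,Q)\le 2/5$ for all $P\in\mathrm{Syl}_2(G)$ and $Q\in\mathrm{Syl}_q(G)$, contradicting $\prp^*_G(2,2')>2/5$.

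The main obstacle is the simple-group step, and specifically the fact that here the prime $2$ is not at our disposal, unlike in Theorem \ref{main3} where both primes may be chosen. One must therefore verify, uniformly across the classification, that a Sylow $2$-subgroup of each simple group is large enough relative to a well-chosen odd prime $q$ to force $\pr(P,Q)\le 2/5$; the sharpness of the bound at $\A_5$ means that the rank-one groups of Lie type and the smallest alternating and sporadic groups require individual attention, and the passage from $S$ to $\aut(S)$ and to the wreath product $\aut(S)\wr\Sym_k$ must be carried out without destroying the estimate.
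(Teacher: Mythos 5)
Your proposal has a genuine gap: the entire weight of the argument is placed on the claim that every nonabelian finite simple group $S$ admits an odd prime $q$ for which $\pr(P,Q)\le 2/5$ for all Sylow $2$-subgroups $P$ and Sylow $q$-subgroups $Q$ -- not just of $S$ but of every group between $S^k$ and $\aut(S)\wr\Sym_k$ -- and this claim is only described, never proved. You flag it yourself as ``the main obstacle'': verifying it means running through the classification (Zsigmondy/Singer primes for the groups of Lie type, centralizer data for alternating and sporadic groups, plus control of outer automorphisms and the wreath-product step), and none of that is carried out, so what you have is a plan rather than a proof. A secondary inefficiency: the hypothesis $\prp^*_G(2,2')>2/5$ passes not only to quotients but also to normal subgroups, since $\pr(P\cap N,Q\cap N)\ge\pr(P,Q)$ by Lemma \ref{lem:trivial}(2); hence a minimal counterexample is already simple, and the whole discussion of $N\le G\le\aut(S)\wr\Sym_k$ is unnecessary.

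The paper's route avoids the classification entirely, and the contrast shows what your reduction throws away. It notes that $\prp^*_G(2,2')>2/5$ implies $\prp^*_G(2,3')>2/5$ and invokes Proposition \ref{2-q}(2): in a simple counterexample, for each prime $q\ne 2,3$ the bound $\pr(P,Q)>2/5=(4+5-1)/(4\cdot 5)$ forces $|P:C_P(Q)|\le 2$ by Lemma \ref{lem:easy}, hence $\frat(P)\le C_P(Q)$. If $\frat(P)\ne 1$, a nontrivial $z\in Z(P)\cap\frat(P)$ has centralizer of $3$-power index, contradicting Burnside's $p^{\alpha}$-lemma; otherwise $P$ is elementary abelian, all involutions are conjugate by Walter--Bender, and since for each $q\ne 3$ some involution centralizes a Sylow $q$-subgroup, every involution has centralizer of $3$-power index -- Burnside again. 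Thus the one odd prime your sketch discards, namely $3$, does all the work uniformly, with no case analysis over simple groups. To make your approach complete you would have to actually carry out the classification-wide verification (a CFSG-dependent project comparable to the paper's Proposition \ref{7}), or replace it by a uniform argument of this Burnside type.
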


The proof of the above theorem does not use the classification of finite simple groups. In fact, the use of the classification enables us to establish the following stronger result.

\begin{theorem}\label{main-2-q} Let $G$ be a finite group. 
 \begin{enumerate}
 \item If $\prp^*_{G}(2,5')>1/2$, then $G$ is soluble.
 \item If $\prp^*_G(2, 7')>5/12$, then $G$ is soluble.
 \item    If $\prp^*_G (2, p') >2/5$ for a prime $p\neq 5,7$, then $G$ is soluble.
\end{enumerate}
\end{theorem}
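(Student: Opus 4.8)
The plan is to argue by minimal counterexample and reduce each of the three assertions to a statement about finite non-abelian simple groups, which is then decided using the classification.

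First I would record that commuting probability cannot decrease under passage to a quotient: if $N\trianglelefteq G$ and $\bar P,\bar Q$ denote the images of subgroups $P,Q$, the natural $|P\cap N|\,|Q\cap N|$-to-one map $P\times Q\to\bar P\times\bar Q$ carries commuting pairs to commuting pairs, whence $\pr(P,Q)\le\pr(\bar P,\bar Q)$; since every Sylow subgroup of $G/N$ is the image of one of $G$, this gives $\prp^*_{G/N}(2,\pi_2)\ge\prp^*_G(2,\pi_2)$. Thus the hypothesis of each part is inherited by quotients, and in a counterexample $G$ of least order every proper quotient is soluble. As an extension of a soluble group by a soluble group is soluble, $G$ must then have a unique minimal normal subgroup $M=\soc(G)=S_1\times\cdots\times S_k$ with the $S_i$ all isomorphic to a fixed non-abelian simple group $S$, together with $C_G(M)=1$ and $G/M$ soluble.

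It is convenient to call an odd prime $\ell$ \emph{$c$-bad} for a group if $\pr(P,Q)\le c$ for every Sylow $2$-subgroup $P$ and every Sylow $\ell$-subgroup $Q$; then $\prp^*_G(2,p')\le c$ precisely when $G$ has a $c$-bad prime different from $p$. Theorem~\ref{main-2-2'} already supplies one $(2/5)$-bad prime in any non-soluble group, so part (3) amounts to showing that $G$ as above has \emph{two} distinct $(2/5)$-bad primes, the only exceptions being the groups whose unique $(2/5)$-bad prime is $5$ or $7$; parts (1) and (2) then require only that in those two exceptional situations a second bad prime appears at the coarser levels $1/2$ and $5/12$, and this is delivered by the prime $3$, since one finds $\pr(P,Q)=1/2$ for suitable Sylow subgroups of $\A_5$ with $|Q|=3$ and $\pr(P,Q)=5/12$ for suitable Sylow subgroups of $\PSL(2,7)$ with $|Q|=3$. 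Because commuting probability on a direct power factorises as $\pr(P_S,Q_S)^k$, passing from $S$ to $S^k$ only decreases the probabilities and hence only enlarges the set of bad primes; checking in addition that the transitive wreath action and the soluble quotient $G/M$ enlarge the Sylow subgroups without creating new commuting, the problem comes down to counting bad primes in the single simple group $S$.

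The classification input is organised around the remark that if a Sylow $\ell$-subgroup $Q$ is cyclic of prime order, self-centralising, and centralised by no involution, then $\pr(P,Q)=(|P|+\ell-1)/(|P|\,\ell)$; as $4\mid|S|$ this is at most $2/5$ for every $\ell\ge5$, and for $\ell=3$ as soon as $|P|\ge16$. Thus an odd prime can escape being $(2/5)$-bad only when $\ell=3$ with $|P|\in\{4,8\}$, or when $\ell$ divides the order of an involution centraliser, so that $\ell$-elements genuinely commute with $2$-elements. Running through the classification, one verifies that such ``good'' primes are too scarce to leave any simple group with fewer than two bad primes apart from $\A_5$ and $\PSL(2,7)$: for large $n$ the alternating group $\A_n$ is handled by its two largest prime divisors (the finitely many small $n$ being checked directly); the Suzuki groups, having no element of order $3$ and Sylow $2$-subgroups of order $q^2$, make every odd prime bad; and the decisive family $\PSL(2,q)$ is controlled by the fact that of its two cyclic tori, of orders $(q-1)/2$ and $(q+1)/2$, exactly one has even order, so that only the primes dividing that torus can be good, forcing two bad primes for every $\PSL(2,q)$ other than $\PSL(2,5)\cong\A_5$ and $\PSL(2,7)$. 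The remaining groups of Lie type, as well as the sporadic groups, have more prime divisors or larger Sylow $2$-subgroups and are disposed of in the same way.

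The main obstacle is the uniform analysis of the low-rank families, above all $\PSL(2,q)$ together with its almost simple overgroups: for each $q$ one must determine exactly which odd primes divide an involution centraliser, bound the corresponding $\pr(P,Q)$ from above, and confirm that replacing $P$ by the larger Sylow $2$-subgroup arising from field and diagonal automorphisms (and, when $k>1$, the wreath action permuting the factors) does not push a bad prime back above $2/5$. Isolating $\A_5$ and $\PSL(2,7)$ as the only exceptions, and verifying that for them the auxiliary prime $3$ attains exactly $1/2$ and $5/12$ even after passing to $\aut(S)$, is precisely what separates the three thresholds appearing in the statement.
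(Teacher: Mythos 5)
Your overall strategy (minimal counterexample plus a sweep through the simple groups) points in the right direction, but as written the proposal has two genuine gaps. First, you only establish that the hypothesis passes to quotients, and you are therefore left with a minimal counterexample having a unique minimal normal subgroup $M=S_1\times\cdots\times S_k$, forcing you to control Sylow $2$-subgroups built from the wreath action and from $\aut(S)$ --- a substantial analysis that you defer and never carry out. The hypothesis is in fact inherited by \emph{normal subgroups} as well: by Lemma \ref{lem:trivial}(2), $\pr(P\cap N,Q\cap N)\ge\pr(P,Q)$, and $P\cap N$ is a Sylow subgroup of $N$. With this observation the minimal counterexample is immediately a nonabelian simple group, and the entire almost-simple/wreath discussion disappears.

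Second, and more seriously, the classification step --- which is the actual content of the theorem --- is asserted rather than proved: ``running through the classification, one verifies that such good primes are too scarce'' is not an argument, and your organising criterion (which odd primes divide an involution centraliser) gives no finite list to check. The paper instead extracts from the hypothesis a single involution $z$ with $|G:C_G(z)|=3^ap^b$: for each prime $q\notin\{2,3,p\}$ the bound $\pr(P,Q)\le(4+3-1)/12=1/2$ forces $|P:C_P(Q)|\le 2$, so $C_P(Q)\supseteq\frat(P)$, and one takes $z\in Z(P)\cap\frat(P)$ (or, when $P$ is elementary abelian, uses the Walter--Bender theorem that all involutions are conjugate). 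This reduces everything to the Li--Li tables of simple groups possessing a subgroup whose index is a product of two prime powers, which are then eliminated using Zsigmondy primes and a handful of direct computations, isolating $\PSL(2,7)$. Without an equivalent funnel into a finite, citable list, your sweep cannot be completed. A smaller point: part (1) of the theorem does not need the classification at all --- the paper derives it from Burnside's $p^\alpha$-lemma together with the conjugacy of involutions in groups with elementary abelian Sylow $2$-subgroups --- whereas your plan routes it through the same unproved classification statement.
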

We remark that Theorem \ref{main-2-q} is the only result in this paper whose proof requires the classification of finite simple groups.

We also give some sufficient conditions for a Sylow subgroup of $G$ to be contained in the soluble radical (see Propositions \ref{3sol} and \ref{psol} in Section 3). 
 Examples show that the numerical bounds given in Theorems \ref{main3},  \ref{main-odd}, \ref{main-2-2'} and  \ref{main-2-q} are sharp.

In the next section we establish some auxiliary results on the commuting probability for Sylow subgroups. Section 3 contains proofs of 
Theorems \ref{main3} - \ref{main-2-q}, 
 as well as some related results. In Section 4 we provide proofs of Theorems \ref{main1} and \ref{main2}.

\section{General comments on commuting probabilities}

If $G$ is a finite group and $X,Y$ are subsets of $G$, we write $\pr(X,Y)$ for the probability that two random elements $x\in X$ and $y\in Y$ commute. Thus,
\[ \pr(X,Y) =\frac{ | \{ (x,y) \in X\times Y \mid xy=yx \} |}{|X|\,|Y|}.\]

Note that 
 $ \pr(X,Y) = \pr(Y,X)$ and 
\[   \pr(X,Y) =\frac 1 { |Y|} \sum_{y\in Y}\frac{|C_{X}(y)| }{|X| }= \frac 1 {|X|} \sum_{x\in X} \frac{|C_{Y}(x)| }{|Y| },\]
where, as usual, $C_{Y}(x)$ denotes the set of all elements of $Y$ centralizing $x$.

The next lemma is useful when considering quotient groups. Essentially, this is a variant of Lemma 2.3 in \cite{DS-commprob}. 
\begin{lemma}\label{quot}  Let $N$ be a normal subgroup of a finite group 
$G$, and let $H,K\leq G$. Then $$\pr(H,K)\leq \pr(HN/N,KN/N)\pr(N\cap H,N\cap K).$$
\end{lemma}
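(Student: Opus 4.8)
The plan is to convert the statement into a counting inequality and prove that by a coset-intersection argument. For subsets $X,Y$ write $C(X,Y)=|\{(x,y)\in X\times Y : xy=yx\}|$, so that $\pr(X,Y)=C(X,Y)/(|X||Y|)$, and abbreviate $\bar{x}=xN$. Using $|H|=|HN/N|\,|N\cap H|$ and the analogous identity for $K$, the four factors $|HN/N||KN/N||N\cap H||N\cap K|$ coincide with $|H||K|$, so after clearing denominators the asserted inequality is equivalent to the purely combinatorial claim
\[
C(H,K)\ \le\ C(HN/N,\,KN/N)\cdot C(N\cap H,\,N\cap K).
\]
I would establish this last inequality directly.

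First I would exploit the second displayed formula for $\pr$, which gives $C(H,K)=\sum_{h\in H}|C_K(h)|$, and analyse $|C_K(h)|$ one coset of $N\cap K$ at a time. For fixed $h$ and a fixed coset $\bar{k}\in KN/N$, the elements of $K$ lying in that coset form a coset of $N\cap K$, and those among them centralising $h$ are its intersection with the subgroup $C_G(h)$; such an intersection is either empty or a coset of $(N\cap K)\cap C_G(h)=C_{N\cap K}(h)$. Thus each coset $\bar k$ contributes either $0$ or $|C_{N\cap K}(h)|$ to $|C_K(h)|$, and it can contribute only when $\bar k$ centralises $\bar h$ in $KN/N$. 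Counting the admissible cosets inside $C_{KN/N}(\bar h)$ yields the pointwise bound $|C_K(h)|\le |C_{KN/N}(\bar h)|\,|C_{N\cap K}(h)|$.

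Next I would sum over $h\in H$, grouping the summands by the image $\bar h=hN$. Since $|C_{KN/N}(\bar h)|$ depends only on $\bar h$, this gives
\[
C(H,K)\ \le\ \sum_{\bar h}\,|C_{KN/N}(\bar h)|\sum_{h\in H,\ hN=\bar h}|C_{N\cap K}(h)|.
\]
The inner sum runs over a coset $h_0(N\cap H)$ of $N\cap H$ inside $H$. The key remaining point is the inequality $\sum_{h\in h_0(N\cap H)}|C_{N\cap K}(h)|\le C(N\cap H,\,N\cap K)$: for each fixed $l\in N\cap K$ the set of $h$ in the coset centralising $l$ is $h_0(N\cap H)\cap C_G(l)$, which is empty or a coset of $C_{N\cap H}(l)$, hence of size at most $|C_{N\cap H}(l)|$; summing over $l\in N\cap K$ recovers exactly $C(N\cap H,N\cap K)$. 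Feeding this back and recognising $\sum_{\bar h}|C_{KN/N}(\bar h)|=C(HN/N,KN/N)$ completes the proof, and dividing by $|H||K|$ returns the probabilistic form.

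The only delicate point, and the main obstacle, is this last coset-translation step: the inner sum is taken over the coset $h_0(N\cap H)$ rather than over the subgroup $N\cap H$ itself, and $h_0$ need not lie in $N$. The estimate survives because intersecting a coset of a subgroup with a centralising subgroup $C_G(l)$ can only shrink the count relative to intersecting the subgroup itself; once this coset-versus-subgroup comparison is isolated, the rest is bookkeeping. This is the same mechanism underlying the cited variant in \cite{DS-commprob}.
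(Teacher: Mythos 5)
Your proposal is correct and takes essentially the same route as the paper: both arguments bound $|C_K(h)|$ by $|C_{KN/N}(hN)|\,|C_{N\cap K}(h)|$, group the sum over $h\in H$ by cosets of $N\cap H$, and then use the observation that the elements of such a coset centralizing a fixed $l\in N\cap K$ form either the empty set or a coset of $C_{N\cap H}(l)$. The only cosmetic difference is in the pointwise bound: you partition $C_K(h)$ into cosets of $N\cap K$, while the paper writes $|C_K(h)|=\frac{|NC_K(h)|}{|N|}|C_{N\cap K}(h)|$ via the isomorphism theorem and then bounds the first factor by $|C_{KN/N}(hN)|$.
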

\begin{proof}  Let $\bar{G}=G/N$, $\bar{H}=HN/N$ and $\bar{K}=KN/N$. Write 
$\bar{H_0}$ for the set of cosets $(N\cap H)h$ with $h\in H$. If $S_0=(N\cap H)h\in\bar{H_0}$, write $S$ for the coset $Nh\in\bar{H}$. Of course, we have a natural one-to-one correspondence between $\bar{H_0}$ and $\bar{H}$.

Write
$$|H||K|\pr(H,K)=\sum_{x\in H}|C_K(x)|=\sum_{S_0\in\bar{H_0}}\sum_{x\in S_0}\frac{|NC_K(x)|}{|N|}|C_{N\cap K}(x)| $$

$$\leq\sum_{S_0\in\bar{H_0}}\sum_{x\in S_0}|C_{\bar{K}}(Nx)||C_{N\cap K}(x)|=\sum_{S\in\bar{H}}|C_{\bar{K}}(S)|\sum_{x\in S_0}|C_{N\cap K}(x)|=  $$

$$=\sum_{S\in\bar{H}}|C_{\bar{K}}(S)|\sum_{y\in N\cap K}|C_{S_0}(y)|.$$

Let $y \in N\cap K$ such that $|C_{S_0}(y)|\neq 0$ and choose $y_0\in C_{S_0}(y)$. So $S_0=(N\cap H)y_0$ and 
$$C_{S_0}(y)=(N\cap H)y_0\cap C_H(y)=C_{N\cap H}(y)y_0$$
whence $|C_{S_0}(y)|=|C_{N\cap H}(y)|.$ 
 Therefore $$|H||K|\pr(H,K)\leq \sum_{S\in\bar{H}}|C_{\bar{K}}(S)|\sum_{y\in N\cap K}|C_{N\cap H}(y)|.$$ Observe that $$\sum_{S\in\bar{H}}|C_{\bar{K}}(S)|=\frac{|H|}{|N\cap H|}\frac{|K|}{|N\cap K|}\pr(\bar{H},\bar{K})$$ and $$\sum_{y\in N\cap K}|C_{N\cap H}(y)|=|N\cap H||N\cap K|\pr(N\cap H,N\cap K).$$
It follows that $\pr(H,K)\leq \pr(\bar{H},\bar{K})\pr(N\cap H,N\cap K)$, as required.
\end{proof}

\begin{lemma}\label{lem:trivial} 
 Let $G$ be a finite group and let $H, K$ be subgroups of $G$.  Then 
\begin{enumerate}
\item If $N$ is a normal subgroup of $G$, then $\pr (HN/N,KN/N) \ge \pr (H,K)$.
\item If $H_0 \le H$, then $\pr (H_0,K) \ge \pr (H,K)$. 
\item If $G=G_1 \times G_2$, $H_i \le G_i$   and $K_i \le G_i$, then 
\[\pr (H_1 \times H_2, K_1 \times K_2) = \pr (H_1, K_1) \pr(H_2, K_2).\] 
\end{enumerate}
\end{lemma}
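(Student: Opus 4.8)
The plan is to handle the three parts separately, each being a short consequence of the centralizer formula for $\pr$ recorded at the start of this section, namely $\pr(X,Y) = \frac{1}{|X||Y|}\sum_{x\in X}|C_Y(x)|$.

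For part (1), I would simply invoke Lemma \ref{quot} with the subgroups $H$ and $K$: it gives $\pr(H,K) \le \pr(HN/N, KN/N)\,\pr(N\cap H, N\cap K)$, and since the second factor is a probability and hence at most $1$, the desired inequality $\pr(HN/N, KN/N) \ge \pr(H,K)$ follows immediately. For part (3), I would use that in a direct product the centralizer of $(x_1,x_2)$ inside $K_1\times K_2$ splits as $C_{K_1}(x_1)\times C_{K_2}(x_2)$. Substituting this into the summation formula lets the double sum over $H_1\times H_2$ factor as a product of two single sums, which is exactly $\pr(H_1,K_1)\pr(H_2,K_2)$; this is a routine computation with no real content beyond the splitting of centralizers.

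The only part requiring an actual argument is part (2), which I expect to be the main (if modest) obstacle. I would write $\pr(H,K) = \frac{1}{|K|}\sum_{y\in K}\frac{|C_H(y)|}{|H|}$ and the analogous expression for $H_0$, and then prove the pointwise inequality $\frac{|C_{H_0}(y)|}{|H_0|} \ge \frac{|C_H(y)|}{|H|}$ for every $y\in K$. Here $C_H(y)$ and $H_0$ are subgroups of $H$ with $C_{H_0}(y) = C_H(y)\cap H_0$, so the coset-counting identity $|C_H(y)H_0| = |C_H(y)|\,|H_0|/|C_H(y)\cap H_0|$ together with the trivial bound $|C_H(y)H_0| \le |H|$ yields precisely this inequality. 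Summing over $y\in K$ and dividing by $|K|$ then gives $\pr(H_0,K)\ge\pr(H,K)$.

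The subtlety worth flagging in part (2) is that the product set $C_H(y)H_0$ need not be a subgroup of $H$, so one cannot invoke the second isomorphism theorem directly; however, the counting identity for the cardinality of a product of two subgroups holds regardless of whether the product is a subgroup, so the argument goes through unchanged. Apart from this point, all three statements follow directly from manipulating the defining sum for $\pr$.
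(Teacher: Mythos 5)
Your proposal is correct and follows essentially the same route as the paper: part (1) via Lemma \ref{quot}, part (3) via the splitting of centralizers in a direct product, and part (2) by reducing to the pointwise inequality $|C_{H_0}(y)|/|H_0|\ge |C_H(y)|/|H|$ for each $y\in K$. The only cosmetic difference is in how that pointwise inequality is justified: the paper rewrites $|C_H(y)|/|H|$ as $1/|y^H|$ and uses $|y^{H_0}|\le|y^H|$, whereas you use the product-set counting identity $|C_H(y)H_0|=|C_H(y)|\,|H_0|/|C_{H_0}(y)|\le|H|$ --- both are valid one-line arguments for the same fact.
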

\begin{proof} 
The first statement follows immediately from Lemma \ref{quot}, as $\pr(N\cap K,N\cap H)\le 1$.

For the second statement note that 
\begin{eqnarray*}
 \pr(H,K) &=&\frac 1 { |K|} \sum_{y\in K}\frac{|C_{H}(y)| }{|H| }=\frac 1 { |K|} \sum_{y\in K}\frac{1 }{|y^H| } \\
&\le& \frac 1 { |K|} \sum_{y\in K}\frac{1 }{|y^{H_0}|} =\pr (H_0,K),
\end{eqnarray*}
where $y^H$ (resp. $y^{H_0}$) is the set of all $H$-conjugates (resp $H_0$-conjugates) of $y$. 

For the third statement,  
  for every $(y_1,y_2) \in K_1 \times K_2$ we have that 
 $C_{H_1 \times H_2}((y_1,y_2) ) = C_{H_1}(y_1) \times C_{H_2}(y_2)$, whence 
\begin{eqnarray*}
 \pr(H_1 \times H_2, K_1 \times K_2)  &=&\frac 1 { |K_1 \times K_2|} \sum_{(y_1,y_2)\in K_1 \times K_2}\frac{|C_{H_1 \times H_2}((y_1,y_2))| }{|H_1 \times H_2| }\\
&=&  \frac 1 { |K_1||K_2|} \sum_{(y_1,y_2) \in K_1 \times K_2} \frac{|C_{H_1}(y_1) \times C_{H_2}(y_2)| }{|H_1||H_2| } \\
&=&  \pr (H_1, K_1) \pr(H_2, K_2).
\end{eqnarray*}
\end{proof}
 
\begin{remark}\label{nopq}  
If a  finite group $G$ has no  elements of order $pq$, then for every  Sylow $p$-subgroup $P$ and every Sylow  $q$-subgroup $Q$ of $G$ we have 
 \[ \pr(P,Q) = \frac{ p^\alpha+q^\beta-1}{p^\alpha q^\beta}, \]
 where $p^\alpha=|P|$ and $q^\beta=|Q|$.  \end{remark}
 Indeed, $C_Q(x)=1$ for every $1 \neq x \in P$. Hence, 
\begin{eqnarray*}
 \pr(P,Q) &=&\frac 1 { |P| |Q|} \sum_{x \in P}{|C_{Q}(x)| } 
 = \frac   {|Q| + (|P|-1) } { |P| |Q|}.
\end{eqnarray*}

The above remark can be generalized as follows. 

\begin{lemma}\label{lem:remark}
Let $H$ and $K$ be subgroups of a finite group $G$. 
 Then
\[ \pr (H,K) \le \frac{n+m-1}{nm}, \]
 where $n=|H:C_H(K)| $ and $m=\min_{x \in H \setminus C_H(K)} |K:C_K(x)|.$
\end{lemma}
\begin{proof} 
We have $|C_K(x)| \le |K|/m$ for every $x \in H \setminus C_H(K)$, hence
\begin{eqnarray*}
 \pr(H,K) &=&\frac 1 { |K| |H|} \sum_{x \in H}{|C_{K}(x)| }\\
 &=& \frac 1 { |K | |H| } \left( \sum_{x \in C_H(K)}{|C_{K}(x)| }  + \sum_{x \in H\setminus C_H(K)} |C_{K}(x)|  \right) \\
  &\le& \frac 1 { |K | |H| } \left( |C_H(K)| |K|   + (|H| - |C_H(K)| )  \frac{|K|}{m}  \right) \\
    &=& \frac  { (m-1) |C_H(K)|   + |H| }  { |H| m}\\
    &=& \frac  { (m-1)    +n }  { n m},
\end{eqnarray*}
 as claimed.
\end{proof}

We will later require the following fact, which can be established by a straightforward calculation.

\begin{remark}\label{xy} If $x, y \geq 1$ are positive integers, then
	$$\frac{(x+1)+y-1}{(x+1)y}\leq \frac{x+y-1}{xy}.
	$$ 
\end{remark}

\begin{lemma}\label{lem:easy}
Let $P$ be a  Sylow $p$-subgroup and $Q$ be a Sylow $q$-subgroup of a finite group $G$. 
 \begin{enumerate}
 		\item If $|P:C_P(Q)| \ge p^a$, then 
		  \[ \pr (P,Q) \le \frac{p^a+q-1}{p^a q}. \]
		\item   If $[P,Q] \neq 1$, then  \[ \pr (P,Q) \le \frac{p+q-1}{pq}. \]
\end{enumerate}
\end{lemma}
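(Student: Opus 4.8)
The plan is to derive both statements from Lemma \ref{lem:remark}, which bounds $\pr(H,K)$ from above by $\frac{n+m-1}{nm}$ with $n=|H:C_H(K)|$ and $m=\min_{x\in H\setminus C_H(K)}|K:C_K(x)|$, combined with the monotonicity recorded in Remark \ref{xy}.

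For part (1) I would apply Lemma \ref{lem:remark} with $H=P$ and $K=Q$. The hypothesis gives $n=|P:C_P(Q)|\ge p^a$ immediately. For the quantity $m$, observe that for every $x\in P\setminus C_P(Q)$ the centralizer $C_Q(x)=Q\cap C_G(x)$ is a \emph{proper} subgroup of the $q$-group $Q$, so its index $|Q:C_Q(x)|$ is a nontrivial power of $q$ and hence at least $q$; thus $m\ge q$. Since $a\ge 1$ we have $n\ge p>1$, so $P\setminus C_P(Q)$ is nonempty and $m$ is well defined.

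It then remains to check that $\frac{n+m-1}{nm}\le \frac{p^a+q-1}{p^a q}$. Writing $f(x,y)=\frac{x+y-1}{xy}$, Remark \ref{xy} says exactly that $f(x+1,y)\le f(x,y)$, and since $f$ is symmetric in its two arguments the same inequality holds in the second coordinate. Iterating, $f$ is nonincreasing in each integer argument, so from $n\ge p^a$ and $m\ge q$ we get $f(n,m)\le f(p^a,m)\le f(p^a,q)$. Together with Lemma \ref{lem:remark} this is the desired bound. For part (2), note that $[P,Q]\neq 1$ forces $C_P(Q)$ to be a proper subgroup of $P$, whence $|P:C_P(Q)|\ge p$; applying part (1) with $a=1$ yields $\pr(P,Q)\le \frac{p+q-1}{pq}$.

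I do not expect a genuine obstacle here, as the lemma is essentially a packaging of the earlier estimates. The only point that needs care is the reduction to the extremal values $p^a$ and $q$: one must invoke the monotonicity of $f$ in \emph{both} coordinates (using the symmetry of $f$ to transport Remark \ref{xy} to the second variable) and verify the bookkeeping that $m\ge q$ and $n>1$, so that $m$ is well defined and the bound from Lemma \ref{lem:remark} applies.
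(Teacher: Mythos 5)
Your proposal is correct and follows essentially the same route as the paper: both apply Lemma \ref{lem:remark} with $n=|P:C_P(Q)|\ge p^a$ and $m\ge q$, and then reduce to the extremal values via the monotonicity in Remark \ref{xy}. You merely spell out the details (properness of $C_Q(x)$ giving $m\ge q$, and the symmetry/iteration argument for monotonicity in both coordinates) that the paper leaves implicit.
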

\begin{proof} 
Let  $n=|P:C_P(Q)| $ and $m=\min_{x \in P \setminus C_P(Q)} |Q:C_Q(x)|.$ 
If $n \ge p^a > 1$, then  $m \ge q$. 
So, by  Lemma \ref{lem:remark} and Remark \ref{xy} we conclude that 
$\pr (P,Q) \le   ({n+m-1})/({nm}) \le ({p^a+q-1})/({p^aq}).$ 
Moreover, if $[P,Q] \neq 1$, then $|P:C_P(Q)| \ge p$. 
\end{proof}

A proof of the next lemma can be found in Eberhard \cite[Lemma 2.1]{eberhard}. 

\begin{lemma}\label{lem} 
Let $G$ be a finite group and $X$ a symmetric subset of $G$ containing the identity. If $(r+1)|X| > |G|$, then $\langle X \rangle= X^{3r}$.
 \end{lemma}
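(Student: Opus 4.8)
The plan is to interpret the hypothesis geometrically and argue by a counting estimate on the ``balls'' of the word metric determined by $X$. Since $X=X^{-1}$ and $1\in X$, for $g\in\langle X\rangle$ let $d(g)$ be the least $k$ with $g\in X^k$ (and $d(1)=0$); this is a genuine metric, and $X^k=\{g:d(g)\le k\}$ because $1\in X$ forces $X^{k}\subseteq X^{k+1}$, so the sets $X^k$ form an increasing chain. Writing $H=\langle X\rangle$ and $S_k=X^k\setminus X^{k-1}$ for the $k$-th shell, one checks that the chain stabilizes exactly when it reaches $H$ (if $X^{k+1}=X^k$ then $X^k$ is closed under right multiplication by $X$, hence equals $H$). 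Thus it suffices to bound the diameter $D=\max\{k:S_k\neq\emptyset\}$ by $3r$. Note also that $|H|\le|G|<(r+1)|X|$.

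First I would record the only local input needed: for any $g$ with $d(g)=k$ and any $x\in X$, the triangle inequality (using that both $x$ and $x^{-1}$ lie in $X$) gives $k-1\le d(gx)\le k+1$, so the translate $gX$, which has exactly $|X|$ elements, is contained in $S_{k-1}\cup S_k\cup S_{k+1}$. Hence
\[ |S_{k-1}|+|S_k|+|S_{k+1}|\ \ge\ |X|\qquad\text{whenever }S_k\neq\emptyset. \]

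The main step is then a summation over disjoint triples of shells. I would apply the displayed inequality at the centres $k=1,4,7,\dots,3t-2$, which involves the pairwise disjoint blocks $(S_0,S_1,S_2),(S_3,S_4,S_5),\dots,(S_{3t-3},S_{3t-2},S_{3t-1})$; each block has total size at least $|X|$ provided its centre shell is nonempty, i.e.\ provided $3t-2\le D$. Summing yields $|H|\ge t\,|X|$. Assuming for contradiction that $D\ge 3r+1$, the choice $t=r+1$ is admissible (its last centre is $S_{3r+1}$, which is then nonempty), giving $|H|\ge(r+1)|X|>|G|\ge|H|$, a contradiction. Therefore $D\le 3r$, that is $X^{3r}=H=\langle X\rangle$.

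The part requiring the most care is arranging the packing of shells so that the factor is exactly $3$: the naive remark that each strict inclusion $X^k\subsetneq X^{k+1}$ adds at least one element is far too weak (it only bounds $D$ by roughly $|H|$), and the real gain comes from the fact that three consecutive shells must absorb an entire translate of $X$. One should double-check the bookkeeping of the disjoint blocks and of the boundary shells---the roles of $S_0=\{1\}$ and of $S_{k+1}=\emptyset$ at $k=D$---so that exactly $r+1$ full copies of $|X|$ are collected once the diameter is assumed to exceed $3r$.
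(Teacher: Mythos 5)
Your proof is correct, and it is essentially the standard argument: the paper does not prove this lemma itself but cites Eberhard's Lemma 2.1, whose proof likewise rests on the observation that a translate $gX$ of an element at word-distance $k$ lies within distances $k-1,k,k+1$, so that the sets $X^{3i}$ grow by at least $|X|$ per triple step until they stabilize at $\langle X\rangle$. Your bookkeeping with disjoint blocks of three consecutive shells, including the boundary cases $S_0=\{1\}$ and the fact that no shell below the diameter can be empty, is accurate.
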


\begin{lemma}\label{lem:H_0}
 Let $G$ be a finite group and let $H, K$ be subgroups of $G$ with $\pr (H,K) \ge \epsilon >0 $. Then 
 there exists a subset $X$ of $H$ such that for  $H_0=\langle X\rangle$ the following holds.  
 \begin{enumerate}
\item $|H:H_0| \le 2/\epsilon-1$; 
\item $|K: C_K(x)| \le 2/\epsilon$ for every $x \in X$;
\item 
  $|K: C_K(x)| \le (2/\epsilon)^{6/\epsilon}$  for every $x \in H_0$.
\end{enumerate}
\end{lemma}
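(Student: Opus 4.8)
The plan is to take for $X$ the set of elements of $H$ with comparatively large centralizer in $K$, and to control its size through the averaging identity for $\pr(H,K)$. Concretely, I would set $X=\{x\in H : |K:C_K(x)|\le 2/\epsilon\}$ and $H_0=\langle X\rangle$. Since $C_K(x)=C_K(x^{-1})$ the set $X$ is symmetric, and since $C_K(1)=K$ it contains the identity; moreover statement (2) is immediate from the very definition of $X$.

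To obtain (1) I would bound $|X|$ from below. Starting from $\pr(H,K)=\frac 1{|H|}\sum_{x\in H}\frac{1}{|K:C_K(x)|}\ge\epsilon$, I split the sum according to whether $x\in X$ or not: each summand with $x\in X$ is at most $1$, whereas each summand with $x\notin X$ is strictly below $\epsilon/2$. This gives $\epsilon|H|\le |X|+(|H|-|X|)\epsilon/2$, whence $|X|\ge \epsilon|H|/(2-\epsilon)$. As $H_0\supseteq X$, this yields $|H:H_0|\le |H|/|X|\le 2/\epsilon-1$. I would stress that it is the sharper estimate $\epsilon|H|/(2-\epsilon)$, rather than the cruder $\epsilon|H|/2$, that is needed to reach exactly $2/\epsilon-1$.

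For (3) I would apply Lemma \ref{lem} to the symmetric subset $X$ of $H$. From $|X|>\epsilon|H|/2$ one checks that $(r+1)|X|>|H|$ as soon as $r+1\ge 2/\epsilon$; the choice $r=\lceil 2/\epsilon\rceil-1$ satisfies this and, crucially, also has $3r<6/\epsilon$. Lemma \ref{lem} then gives $H_0=X^{3r}$, so every $h\in H_0$ can be written as $h=x_1\cdots x_{3r}$ with all $x_i\in X$. Any element of $K$ centralizing each $x_i$ centralizes $h$, so $C_K(h)\supseteq\bigcap_{i}C_K(x_i)$ and hence $|K:C_K(h)|\le\prod_i|K:C_K(x_i)|\le (2/\epsilon)^{3r}\le(2/\epsilon)^{6/\epsilon}$, which is statement (3).

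The conceptual content here is light: the whole argument rests on the averaging formula and on Lemma \ref{lem}. The one delicate point, which I expect to be the main obstacle, is the coordination of constants—choosing $r=\lceil 2/\epsilon\rceil-1$ so that simultaneously $(r+1)|X|>|H|$ holds (to invoke Lemma \ref{lem}) and $3r<6/\epsilon$ holds (to control the exponent in (3)), while squeezing the lower bound on $|X|$ tightly enough to land on $2/\epsilon-1$ in (1) instead of the weaker $2/\epsilon$.
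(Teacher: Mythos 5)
Your proposal is correct and follows essentially the same route as the paper: the same choice of $X$ as the set of elements with $K$-class size at most $2/\epsilon$, the same averaging estimate giving $|X|\ge \frac{\epsilon}{2-\epsilon}|H|$, and the same appeal to Lemma \ref{lem} to write elements of $H_0$ as products of at most $6/\epsilon$ elements of $X$. Your explicit bookkeeping of the parameter $r$ and of the intersection of centralizers only spells out steps the paper leaves implicit.
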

\begin{proof} 
Set
 \[X=\{x\in H \mid  |x^K|\leq 2/\epsilon\}.\]
 By hypotheses,  $|K: C_K(x)| \le 2/\epsilon$ for every $x \in X$. 
Note that $H \setminus X=\{ x\in H \mid  |C_K(x)| < (\epsilon/2) |K| \}$, 
whence 
\begin{eqnarray*} 
\epsilon |H||K|  &\le  & \sum_{x\in H}|C_K(x)|\le \sum_{x \in X} |K| + \sum_{x  \in H \setminus X} \frac{\epsilon}{2} |K|\\
&\le & |X| |K| +(|H| - |X|)\frac{\epsilon}{2}|K| = \left(1-\frac{\epsilon}{2}\right) |X| |K| +\frac{\epsilon}{2} |H| |K|.
\end{eqnarray*}
Therefore 
$ \epsilon |H| \le   \left(1-{\epsilon}/{2}\right)  |X|+  ({\epsilon}/{2}) |H| $
and $({\epsilon}/{2}) |H| \le   \left(1-{\epsilon}/{2}\right)  |X|$. 
Clearly, $|H_0| \ge |X| \ge \frac{{\epsilon}/{2}}{1-{\epsilon}/{2}}  |H|$ and so the index of 
 $H_0$ in $H$ is at most $2/\epsilon -1$. 
 Moreover, it follows from Lemma 
\ref{lem} that every element of $H_0$ is a product of at most $6/\epsilon$ elements of $X$.
Therefore $|h^K| \le (2/\epsilon)^{6/\epsilon}$ for every $h \in H_0$.
\end{proof}

Let a group $A$ act by automorphisms on a group $G$. We say for short that the group $A$ acts on a group $G$ coprimely if the orders of $A $ and $G$ are
finite and coprime, that is, $(|A|, |G|) = 1$. The following lemma records some standard results on coprime action. In the sequel the lemma will often be used without explicit references.

\begin{lemma} \cite[(24.1),(24.5),(24.6)]{asch} \label{coprime} Let a group $A$ act coprimely on a finite group $G$ and let $\frat(G)$ be the Frattini subgroup of $G$. Then:
\begin{enumerate}
\item If $N$ is a normal $A$-invariant subgroup of $G$, then $C_{G/N}(A)=C_G(A)N/N$.
\item $[G, A, A] = [G, A].$
\item If $G$ is abelian, then $G=[G,A]\times C_G(A).$
\item If $A$ centralizes $G/\frat(G)$, then $A$ centralizes $G$.
\end{enumerate} \end{lemma}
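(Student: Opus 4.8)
The plan is to take the four statements in a logical order, since only the first one uses coprimality in an essential way and the other three are formal consequences of it. In (1) the inclusion $C_G(A)N/N\subseteq C_{G/N}(A)$ is immediate, so the content is the reverse inclusion: every $A$-fixed coset $gN$ should contain an honest $A$-fixed element. I would prove this by letting $N$ act on the set $\Omega=gN$ by right translation (a transitive action) and letting $A$ act on $\Omega$ by conjugation; because $gN$ is $A$-invariant these two actions are compatible with the conjugation action of $A$ on $N$, and since $(|A|,|N|)=1$, Glauberman's lemma on coprime action on a transitive set produces a fixed point $c\in gN\cap C_G(A)$, whence $gN=cN\in C_G(A)N/N$. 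Equivalently one can induct on $|N|$ and reduce to abelian $N$, where the obstruction to lifting a fixed point is a class in $H^1(A,N)$ that vanishes by coprimality. I expect this first part to be the main obstacle, as it is exactly here that the coprimality hypothesis does nontrivial work; everything after is bookkeeping.

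The engine for the remaining parts is the decomposition $G=[G,A]C_G(A)$, which follows at once from (1): in $\bar{G}=G/[G,A]$ the group $A$ acts trivially (as $[\bar{G},A]=1$), so $\bar{G}=C_{\bar{G}}(A)$, and (1) gives $C_{\bar{G}}(A)=C_G(A)[G,A]/[G,A]$; pulling back yields $G=[G,A]C_G(A)$. For (2), set $W=[G,A]$. Writing $G=WC_G(A)$ and using $[bc,a]=[b,a]^c[c,a]$ together with $[c,a]=1$ for $c\in C_G(A)$, I obtain that $[G,A]$ equals the normal closure $[W,A]^{C_G(A)}$ of $[W,A]$ under $C_G(A)$. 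But $C_G(A)$ already normalizes $[W,A]$, since for $c\in C_G(A)$ one has $[w,a]^c=[w^c,a^c]=[w^c,a]\in[W,A]$; hence $[W,A]^{C_G(A)}=[W,A]$ and therefore $[G,A]=[W,A]=[G,A,A]$.

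For (3), with $G$ abelian the decomposition $G=[G,A]C_G(A)$ already writes $G$ as a product of two subgroups, so it remains to show the intersection is trivial. I would use the norm endomorphism $\theta(g)=\prod_{a\in A}g^a$, which is a homomorphism because $G$ is abelian and whose image lies in $C_G(A)$. A direct reindexing shows $\theta$ annihilates $[G,A]$ (as $\theta([g,a_0])=\theta(g)^{-1}\theta(g)=1$), while on $C_G(A)$ it acts as the $|A|$-power map, which is an automorphism of $G$ by coprimality. Hence any $x\in[G,A]\cap C_G(A)$ satisfies $x^{|A|}=\theta(x)=1$, so $x$ has order dividing $\gcd(|A|,|G|)=1$ and $x=1$; the product is therefore direct.

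Finally, (4) follows from the same decomposition together with the non-generator property of the Frattini subgroup. The hypothesis that $A$ centralizes $G/\frat(G)$ means precisely $[G,A]\subseteq\frat(G)$, and combining this with $G=[G,A]C_G(A)$ gives $G=\frat(G)C_G(A)$. Since the elements of $\frat(G)$ are non-generators, this forces $G=C_G(A)$, i.e. $A$ centralizes $G$. I should note that parts (2)--(4) are purely formal once (1) and the decomposition are in hand; the only genuinely substantive point, and the step I would treat most carefully, is the lifting of fixed points through coprime quotients in (1).
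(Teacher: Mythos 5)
The paper does not prove this lemma at all: it is quoted verbatim from Aschbacher's \emph{Finite Group Theory} (results (24.1), (24.5), (24.6)), so there is no in-paper argument to compare against. Your proof is correct and is essentially the standard textbook derivation: (1) by lifting fixed points through coprime quotients, then the decomposition $G=[G,A]C_G(A)$ as the engine for (2)--(4), with the norm map $\theta(g)=\prod_{a\in A}g^a$ giving the directness in (3) and the non-generator property of $\frat(G)$ giving (4). The only point worth making explicit in (1) is that Glauberman's lemma as usually stated requires $A$ or $N$ to be soluble; under the coprimality hypothesis this is automatic by Feit--Thompson (one of the two groups has odd order), or it can be sidestepped entirely by the inductive reduction to abelian $N$ that you already sketch, where the obstruction in $H^1(A,N)$ dies by an averaging argument. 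With that caveat noted, all four parts are sound.
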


\begin{lemma}\label{cor:easy}
	Let $G$ be a finite group,  $P$ a   $p$-subgroup and $Q$ a  $q$-subgroup of $G$ for two different primes $p$ and $q$. 
	 Assume that $\pr (P,Q)\ge \epsilon>0$.
	\begin{enumerate}
		\item If  $P$ normalizes $Q$ and $p > (2/\epsilon)^{6/\epsilon}$, then $[P,Q]=1$. 
		\item If $p > (2/\epsilon)^{6/\epsilon}$, then $Q$ has a normal subgroup $Q_0$ such that $|Q:Q_0| \le \lfloor 2/\epsilon \rfloor!$ and $[P,Q_0]=1$. 
	\end{enumerate}
\end{lemma}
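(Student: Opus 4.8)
The plan is to exploit the symmetry $\pr(P,Q)=\pr(Q,P)$ and to apply Lemma~\ref{lem:H_0} with the roles reversed, taking $H=Q$ and $K=P$. This produces a subset $X\subseteq Q$ such that, with $H_0:=\langle X\rangle$, we have $|Q:H_0|\le 2/\epsilon-1$ and $|P:C_P(x)|\le (2/\epsilon)^{6/\epsilon}$ for every $x\in H_0$. The crucial observation is that $C_P(x)$ is a subgroup of the $p$-group $P$, so $|P:C_P(x)|$ is a power of $p$; since $p>(2/\epsilon)^{6/\epsilon}$, this index is a $p$-power strictly smaller than $p$, hence equal to $1$. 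Therefore every $x\in H_0$ is centralized by all of $P$, that is $H_0\le C_Q(P)$, and consequently
\[
|Q:C_Q(P)|\le |Q:H_0|\le 2/\epsilon-1 < p .
\]
This inequality is the common core of both statements. (One should record that $(2/\epsilon)^{6/\epsilon}\ge 2/\epsilon$, so the strict inequality $2/\epsilon-1<p$ is indeed available.)

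For part (2) I would pass from $C_Q(P)$ to its normal core $Q_0:=\bigcap_{g\in Q}C_Q(P)^g$. Then $Q_0\trianglelefteq Q$, and $[P,Q_0]=1$ since $Q_0\le C_Q(P)$. The standard embedding $Q/Q_0\hookrightarrow \Sym(|Q:C_Q(P)|)$ coming from the action of $Q$ on the cosets of $C_Q(P)$ shows that $|Q:Q_0|$ divides $|Q:C_Q(P)|!$; as $|Q:C_Q(P)|\le 2/\epsilon-1$ is an integer, it is at most $\lfloor 2/\epsilon\rfloor$, and therefore $|Q:Q_0|\le \lfloor 2/\epsilon\rfloor!$, as required.

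For part (1) I can use the extra hypothesis that $P$ normalizes $Q$ to upgrade ``large centralizer'' to ``full centralizer'' via coprime action. Suppose for contradiction that $[P,Q]\ne 1$. Since $P$ acts coprimely on $Q$, Lemma~\ref{coprime}(4) shows that $P$ acts nontrivially on the elementary abelian section $V:=Q/\frat(Q)$, while Lemma~\ref{coprime}(1) gives $C_V(P)=C_Q(P)\frat(Q)/\frat(Q)$, so that $|V:C_V(P)|$ divides $|Q:C_Q(P)|$ and in particular $|V:C_V(P)|<p$. On the other hand, the image of $P$ in $\GL(V)=\GL_n(q)$ is a nontrivial $p$-group and hence contains an element $g$ of order $p$. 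Being of order coprime to $q$, $g$ is diagonalizable over $\overline{\mathbb{F}_q}$ with eigenvalues that are $p$-th roots of unity, not all equal to $1$; thus the characteristic polynomial of $g$ is divisible by the minimal polynomial over $\mathbb{F}_q$ of a primitive $p$-th root of unity, which has degree $d=\mathrm{ord}_p(q)$. Hence $g-1$ has rank at least $d$, giving $|V:C_V(g)|\ge q^{d}$. Since $q^{d}\equiv 1\pmod p$ and $q^{d}>1$, we obtain $q^{d}\ge p+1>p$. As $C_V(P)\le C_V(g)$, this yields $|V:C_V(P)|\ge q^{d}>p$, contradicting $|V:C_V(P)|<p$. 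Therefore $[P,Q]=1$.

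The main obstacle is the eigenvalue argument in part (1): showing that a nontrivial coprime $p$-action on $V$ forces $|V:C_V(P)|\ge q^{\mathrm{ord}_p(q)}$, which then exceeds $p$ because $q^{\mathrm{ord}_p(q)}\equiv 1\pmod p$. Everything else is routine bookkeeping with $p$-powers, the passage to $Q/\frat(Q)$, and the normal core, so the only points requiring care are the collapse of the factorial bound to $\lfloor 2/\epsilon\rfloor!$ and the verification that the relevant indices are genuinely smaller than $p$.
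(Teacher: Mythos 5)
Your proposal is correct. For part (2) it is essentially the paper's own argument: both apply Lemma \ref{lem:H_0} with the roles of $P$ and $Q$ reversed, observe that $|P:C_P(x)|$ is a $p$-power bounded by $(2/\epsilon)^{6/\epsilon}<p$ and hence trivial, and then pass to a normal core to get the factorial index bound. For part (1), however, you take a genuinely different route. The paper applies Lemma \ref{lem:H_0} in the original direction to get $|Q:C_Q(x)|\le(2/\epsilon)^{6/\epsilon}$ for \emph{every} $x\in P$ (using $p>2/\epsilon-1$ to force $P_0=P$), and then, for each cyclic $A=\langle x\rangle$, notes that the $p$-group $A$ acts on the $q$-group $[\,Q/\frat(Q),A\,]$ of order less than $p$, so the action is trivial, whence $[\,Q/\frat(Q),A\,]=[\,Q/\frat(Q),A,A\,]=1$ by coprimality and $[Q,A]=1$ by Lemma \ref{coprime}(4). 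You instead work with the whole of $P$ at once: from $|Q:C_Q(P)|<p$ you deduce $|V:C_V(P)|<p$ for $V=Q/\frat(Q)$, and rule out a nontrivial action by the eigenvalue argument showing that an order-$p$ linear map on an $\F_q$-space with a nontrivial eigenvalue satisfies $|V:C_V(g)|\ge q^{\mathrm{ord}_p(q)}\ge p+1$. Both arguments are sound; the paper's is slightly more elementary (pure orbit/commutator counting, no linear algebra), while yours has the mild advantage of producing the stronger intermediate statement $|Q:C_Q(P)|\le 2/\epsilon-1$ that serves both parts at once. A small remark: from $|Q:C_Q(P)|<p$ one can also finish part (1) by counting $P$-orbits on $Q$ (the nontrivial orbits have size divisible by $p$, forcing $|Q:C_Q(P)|\equiv 1 \pmod p$), which avoids the cyclotomic considerations entirely.
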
 

\begin{proof} 
 Set $l= (2/\epsilon)^{6/\epsilon}$. 
	By Lemma \ref{lem:H_0}, 
	$P$ has a subgroup $P_0=\langle X\rangle$ of index at most  $2/\epsilon-1$ such that   $|Q: C_Q(x)| \le 2/\epsilon $ for every $x \in X$. Since $p>l > 2/\epsilon-1$, we deduce that $P=P_0$ and so $|Q: C_Q(x)| \le l $ for every $x \in P$. 
	
	Assume now that  $P$ normalizes $Q$ and let $x$ be an element of $P$. 
	Consider the quotient group $\bar Q= Q/\frat(Q)$ and set $A=\langle x \rangle$. Then $A$ acts coprimely on the abelian group $\bar Q$.	Since $A$ is a $p$-group  acting on $[\bar Q,A]$ and $p > l\ge |[\bar Q,A]|$,  we deduce that the action is trivial. So $[\bar Q,A,A]=[\bar Q,A]=1$. Therefore it follows from Lemma \ref{coprime} that $[Q,A]=1$ as well.
	Since this holds for every element $x\in P$, we conclude that $[P,Q]=1$. This proves (1).
	
	To prove (2),  let again $p > l= (2/\epsilon)^{6/\epsilon}$ and note that $\pr(Q,P)=\pr (P,Q)\ge \ep$. So, by Lemma \ref{lem:H_0}, 
	$Q$ has a subgroup $H_0$ of index at most  $2/\epsilon$ such that   $|P: C_P(y)| \le l$ for every $y \in H_0$.  Since  $p > l$, it follows that $|P: C_P(y)| =1$  for every $y \in H_0$, whence  	$[P, H_0]=1$. Observe that $Q_0=\cap_{x\in Q}H_0^x$ 
	is a normal subgroup of index at most $\lfloor 2/\epsilon \rfloor!$ in $Q$ with the desired properties. 
\end{proof}

Let $\prp^*(G) \geq\ep$. Since any two Sylow $p$-subgroups of $G$ are conjugate, it follows that for any $p,q\in \pi(G)$ with $q\ne p$ and any Sylow $p$-subgroup $P$ of $G$ there exists a Sylow $q$-subgroup $Q$ of $G$ such that $\pr(P,Q) \ge \epsilon$.
 
The following example shows that a group $G$ may satisfy the assumption that $\prp^*(G)\geq\ep$ and yet have a  Sylow $p$-subgroup $P$ and a Sylow $q$-subgroup $Q$, for distinct primes $p,q\in \pi(G)$, such that $\pr (P,Q)<\epsilon$. Thus the assumption that $\prp^*(G)\geq\ep$ is weaker than the assumption that $\pr (P,Q) \ge \epsilon$ for every pair of Sylow subgroups $P$ and $Q $ of $G$ corresponding to different primes.

\begin{example} {\rm For a given pair $(p,q)$, let $\Omega_{p,q}(G)$ be the set of the numbers ${\rm{Pr}}(P,Q)$ where $P$ and $Q$ are arbitrary Sylow $p$-subgroup and Sylow $q$-subgroup of $G$. Consider the symmetric group $G=\Sym_5$. 
  We can compute  $\pr (P,Q)$ (see Remark \ref{nopq}) and we find 
$$\Omega_{2,3}(G)=\left\{\frac{5}{12},\frac{1}{2}\right\},
 \Omega_{2,5}(G)=\left\{\frac{3}{10}\right\}, 
\Omega_{3,5}(G)=\left\{\frac{7}{15}\right\}.
$$ 
Although $|\Omega_{2,3}(G)|\neq 1$, the group $G$ is not the example we are looking for: since $\frac{3}{10} < \frac{5}{12} < \frac{7}{15} < \frac{1}{2},$ 
 we have $\prp^*(G)=\frac{3}{10}$ and so $\pr (P,Q) \ge \frac{3}{10}$ for every Sylow $2$-subgroup $P$ and every Sylow $3$-subgroup $Q$.  

Let $X=\Sym_5 \times \Sym_3^t$, where $\Sym_3^t$ denotes the direct product of $t$ copies of $\Sym_3$.  Note that  $\Omega_{2,3}( \Sym_3)=\left\{\frac{2}{3}\right\}$.
 It follows from Lemma 2.2 (3), that 
$$\Omega_{2,3}(X)=\left\{\frac{5}{12} \left(\frac{2}{3}\right)^t,\frac{1}{2}\left(\frac{2}{3}\right)^t\right\}, \
\Omega_{2,5}(X)=\left\{\frac{3}{10}\right\}, \ 
\Omega_{3,5}(X)=\left\{\frac{7}{15}\right\}.
$$ 
In particular, if $\frac{1}{2} \left(\frac{2}{3}\right)^t\leq \frac{3}{10}$, that is, if $t\geq 2$, then 
$ \prp^*(X)= \frac{1}{2} \left(\frac{2}{3}\right)^t$, 
 but there exist a Sylow 2-subgroup $P$ and a Sylow 3-subgroup $Q$ of $X$ with $\pr(P,Q)=\frac{5}{12} \left(\frac{2}{3}\right)^t<\prp^*(X).$}
\end{example}

The following lemma is a straightforward consequence of Lemma  \ref{lem:trivial}. It will be used without explicit mention.

\begin{lemma}\label{lem:trivial*} 
 Let $G$ be a finite group such that $\prp^*_G( \pi_1, \pi_2) \geq\ep$, for some sets of primes  $\pi_1, \pi_2$. 
If $N$ is a normal subgroup of $G$, then $\prp^*_{G/N} ( \pi_1, \pi_2) \geq\ep$ and $\prp^*_N( \pi_1, \pi_2) \geq\ep$. 
\end{lemma}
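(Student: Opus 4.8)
The plan is to verify the defining property of $\prp^*$ prime-pair by prime-pair, reducing everything to the two monotonicity statements in Lemma~\ref{lem:trivial}. Fix distinct primes $p\in\pi_1$ and $q\in\pi_2$. By the hypothesis $\prp^*_G(\pi_1,\pi_2)\ge\ep$ there exist a Sylow $p$-subgroup $P$ and a Sylow $q$-subgroup $Q$ of $G$ with $\pr(P,Q)\ge\ep$; I would use this single witnessing pair $(P,Q)$ to produce witnessing Sylow subgroups in both $G/N$ and $N$.

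For the quotient, first I would recall the standard fact that $PN/N$ is a Sylow $p$-subgroup of $G/N$ and $QN/N$ is a Sylow $q$-subgroup of $G/N$. Then Lemma~\ref{lem:trivial}(1) gives
\[
\pr(PN/N,QN/N)\ge\pr(P,Q)\ge\ep,
\]
so the pair $(PN/N,QN/N)$ witnesses the required inequality for the primes $p,q$ in $G/N$. Since $p,q$ were arbitrary distinct primes from $\pi_1,\pi_2$, this shows $\prp^*_{G/N}(\pi_1,\pi_2)\ge\ep$.

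For the normal subgroup, I would use that $P\cap N$ and $Q\cap N$ are Sylow $p$- and $q$-subgroups of $N$, respectively (again a standard property of the intersection of a Sylow subgroup with a normal subgroup). Applying Lemma~\ref{lem:trivial}(2) with $H_0=P\cap N\le P=H$ and $K=Q$ yields $\pr(P\cap N,Q)\ge\pr(P,Q)\ge\ep$; applying it once more, now to the subgroup $Q\cap N\le Q$ and using the symmetry $\pr(X,Y)=\pr(Y,X)$, gives
\[
\pr(P\cap N,Q\cap N)=\pr(Q\cap N,P\cap N)\ge\pr(Q,P\cap N)=\pr(P\cap N,Q)\ge\ep.
\]
Hence $(P\cap N,Q\cap N)$ witnesses the inequality for $p,q$ in $N$, and therefore $\prp^*_N(\pi_1,\pi_2)\ge\ep$.

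I do not expect a genuine obstacle here: the only points worth checking are the two elementary Sylow facts (images in a quotient and intersections with a normal subgroup are again Sylow) and the bookkeeping that the same witnessing pair $(P,Q)$ can be pushed to both $G/N$ and $N$. The closest thing to a subtlety is the convention adopted earlier that a Sylow $r$-subgroup is taken to be trivial when $r$ does not divide $|G|$; if $p$ or $q$ fails to divide $|G|$ (and hence the order of $G/N$ or of $N$), the relevant Sylow subgroup is trivial and $\pr$ equals $1\ge\ep$, so the inequality holds vacuously. Thus the lemma follows directly from Lemma~\ref{lem:trivial}, as claimed.
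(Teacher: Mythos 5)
Your proof is correct and follows essentially the same route as the paper, which simply cites Lemma~\ref{lem:trivial} together with the standard facts that $PN/N$ and $P\cap N$ are Sylow subgroups of $G/N$ and $N$; you have merely written out the two applications of the monotonicity statements (and the double use of part (2) with symmetry for the normal-subgroup case) explicitly. Your remark about the trivial-Sylow convention when $p$ or $q$ fails to divide $|N|$ is a reasonable extra check and causes no difficulty.
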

\begin{proof} The lemma follows from Lemma \ref{lem:trivial} and the fact that if $P$ is a Sylow subgroup  of $G$, then $PN/N$ and $P\cap N$ are Sylow subgroups of  $G/N$ and $N$, respectively. 
\end{proof}

\section{Criteria for nilpotency and solubility} 

In this section we establish Theorems \ref{main3}, \ref{main-odd}, \ref{main-2-2'} and \ref{main-2-q}, as well as some related results. Throughout, we use the fact that groups of odd order are soluble \cite{fetho} without explicit references.
 
Note that for the symmetric group $\Sym_3$ we have $\prp^* (\Sym_3)=2/3$. The following proposition shows that when  $\prp^*(G)>2/3$ the group is necessarily nilpotent.

\begin{proposition}\label{cor:nilpotent} Let $G$ be a  finite  group such that $\prp^*(G)>2/3$. Then $G$ is nilpotent. 
\end{proposition}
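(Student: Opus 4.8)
The plan is to prove that $\prp^*(G) > 2/3$ forces $G$ to be nilpotent by showing that every pair of Sylow subgroups belonging to distinct primes commutes elementwise. Recall that a finite group is nilpotent precisely when all of its Sylow subgroups are normal, equivalently when any Sylow $p$-subgroup commutes with any Sylow $q$-subgroup for distinct primes $p,q$. So it suffices to show that under the hypothesis $\prp^*(G)>2/3$ we have $[P,Q]=1$ for all such pairs.

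The key observation is the numerical one supplied by Lemma~\ref{lem:easy}(2): if $[P,Q]\neq 1$ for a Sylow $p$-subgroup $P$ and a Sylow $q$-subgroup $Q$, then
\[
\pr(P,Q) \le \frac{p+q-1}{pq} = \frac{1}{p}+\frac{1}{q}-\frac{1}{pq}.
\]
First I would argue that if $G$ is not nilpotent then there exist distinct primes $p,q\in\pi(G)$ and Sylow subgroups with $[P,Q]\neq 1$. The natural route is induction on $|G|$: by Lemma~\ref{lem:trivial*}, the hypothesis $\prp^*(G)>2/3$ passes to quotients, so if every proper quotient were nilpotent I could reduce to the case where $G$ has a unique minimal normal subgroup or otherwise control the Fitting structure. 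The cleanest formulation is simply that non-nilpotency of $G$ guarantees some pair of Sylow subgroups for distinct primes fails to commute.

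Having located such a pair, the heart of the argument is purely arithmetic. For distinct primes the smallest possible values of $p$ and $q$ are $2$ and $3$, and the function $\frac{p+q-1}{pq}$ is decreasing in each variable (this is exactly Remark~\ref{xy}). Hence its maximum over distinct primes is attained at $\{p,q\}=\{2,3\}$, giving $\frac{2+3-1}{6}=\frac{4}{6}=\frac{2}{3}$. Therefore whenever $[P,Q]\neq 1$ we obtain $\pr(P,Q)\le 2/3$. But the hypothesis furnishes, for each such pair of primes, Sylow subgroups with commuting probability at least $\prp^*(G)>2/3$, and since all Sylow $p$-subgroups are conjugate the quantity $\pr(P,Q)$ depends only on the pair of primes up to the conjugacy remark preceding the Example. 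This yields the contradiction $\prp^*(G)\le 2/3 < \prp^*(G)$.

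The main obstacle I anticipate is the bookkeeping between ``there exists a good pair of Sylow subgroups'' (the definition of $\prp^*$) and ``a specific fixed pair fails to commute.'' Because $\prp^*$ only asserts the existence of one commuting-friendly pair for each $\{p,q\}$, I must ensure that the non-commuting pair I extract from non-nilpotency is comparable. The resolution is that for a fixed pair of primes the value $\pr(P,Q)$ is independent of the choice of Sylow subgroups up to conjugacy only in special cases; more robustly, I use that if some Sylow $p$-subgroup fails to normalize some Sylow $q$-subgroup then, by conjugacy, \emph{every} choice realizing $\prp^*$ also exhibits non-commuting behaviour between the relevant primes, so Lemma~\ref{lem:easy}(2) applies to the witnessing pair and caps $\prp^*(G)$ at $2/3$. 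Making this comparison airtight—perhaps by passing to the normal closure or using that $G/F(G)$ inherits the hypothesis—is the delicate point; the arithmetic itself is immediate once the correct pair is in hand.
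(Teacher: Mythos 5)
Your arithmetic core---Lemma \ref{lem:easy}(2) together with Remark \ref{xy}, showing that any non-commuting pair of Sylow subgroups for distinct primes satisfies $\pr(P,Q)\le (2+3-1)/6=2/3$---is exactly the computation the paper uses, and it is correct. The surrounding logic, however, has a genuine gap, and you have located it yourself without closing it. Since $\prp^*(G)$ only guarantees, for each pair of primes, the existence of \emph{one} pair of Sylow subgroups with large commuting probability, a single non-commuting pair $(P_0,Q_0)$ does not cap $\prp^*(G)$. Your proposed repair---that non-commuting behaviour between two fixed primes transfers ``by conjugacy'' to every pair, in particular to the witnessing pair---is false in general. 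For example, in $\PSL(2,29)$ the Sylow $3$- and Sylow $5$-subgroups lie in self-centralizing cyclic subgroups of order $15$; two such Sylow subgroups commute exactly when they lie in the same cyclic subgroup, so some pairs commute (with $\pr(P,Q)=1$) while others do not (with $\pr(P,Q)=7/15$). Conjugacy only gives $\pr(P^g,Q^g)=\pr(P,Q)$, and the witnessing pair need not be a simultaneous conjugate of your bad pair; the paper's own Example, where $\Omega_{2,3}(\Sym_5)=\{5/12,1/2\}$, already shows that $\pr(P,Q)$ is not determined by the pair of primes.

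The fix---and this is the paper's proof---is to abandon the goal of showing that \emph{all} pairs commute. Fix a prime $p$ and a Sylow $p$-subgroup $P$. Since all Sylow $p$-subgroups are conjugate, for each $q\ne p$ the hypothesis yields \emph{some} Sylow $q$-subgroup $Q$ with $\pr(P,Q)>2/3$; your arithmetic then forces $[P,Q]=1$, i.e.\ $Q\le C_G(P)$. Hence $PC_G(P)$ contains a full Sylow subgroup of $G$ for every prime, so $G=PC_G(P)\le N_G(P)$ and $P$ is normal in $G$. As $p$ was arbitrary, $G$ is nilpotent. This one-sided use of Sylow conjugacy (fix $P$, accept whichever $Q$ the hypothesis provides) is what makes the argument airtight; no induction on $|G|$, passage to $G/F(G)$, or comparison between different pairs is needed.
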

\begin{proof} 
 By hypothesis, for every two primes $q \neq p$ there exists  a Sylow $p$-subgroup $P$ and  a Sylow  $q$-subgroup $Q$ of $G$ such that 
  $\pr (P,Q) > 2/3$. As all Sylow $q$-subgroups are conjugate, here we can assume that $P$ is fixed.  
  If $[P,Q] \neq1$, then by  Lemma \ref{lem:easy} (2) and Remark \ref{xy} 
 we have 
\[ \pr(P,Q) \le \frac{p+q-1}{pq} \le \frac{2+3-1}{2 \cdot 3}=\frac 2{3},  \]
 a contradiction. Therefore $[P,Q]=1$ for every $q \neq p$. 
 Then the index of $C_G(P)$ in $G$ is a $p$-number and thus $G=PC_G(P)$. 
 
 This proves that every Sylow $p$-subgroup of $G$ is normal and thus $G$ is nilpotent. 
\end{proof}

With  more technical arguments we can sharpen the previous result. 

\begin{proposition}\label{nice} Let $G$ be a finite group and $p$ a prime divisor of $|G|$. Suppose that 
  $\prp^*_G (p, p')> \frac{p+q-1}{pq}$, where $q$ is the smallest prime in $\pi(G)\setminus\{p\}$. Then
$G={\rm{O}}_p(G) \times {\rm{O}}_{p^\prime}(G)$.
\end{proposition}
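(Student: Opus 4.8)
The plan is to prove that one fixed Sylow $p$-subgroup $P$ is normal and is centralized by a normal $p'$-complement; the direct decomposition then drops out immediately. The first ingredient is a monotonicity observation: by Remark \ref{xy} the quantity $\tfrac{p+r-1}{pr}$ is non-increasing in $r$, so $\tfrac{p+r-1}{pr}\le\tfrac{p+q-1}{pq}$ for every prime $r\ge q$. Since $q$ is the smallest prime in $\pi(G)\setminus\{p\}$, this holds for \emph{every} prime $r\in\pi(G)\setminus\{p\}$.

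First I would fix a single Sylow $p$-subgroup $P$. Because all Sylow $r$-subgroups are conjugate, the hypothesis $\prp^*_G(p,p')>\tfrac{p+q-1}{pq}$ provides, for each prime $r\in\pi(G)\setminus\{p\}$, a Sylow $r$-subgroup $R$ of $G$ with $\pr(P,R)>\tfrac{p+q-1}{pq}\ge\tfrac{p+r-1}{pr}$. The contrapositive of Lemma \ref{lem:easy}(2) then forces $[P,R]=1$, i.e. $R\le C_G(P)$. Hence $C_G(P)$ contains a full Sylow $r$-subgroup of $G$ for every $r\neq p$, so $|G|_{p'}$ divides $|C_G(P)|$ and $|G:C_G(P)|$ is a power of $p$.

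Next I would pin down the structure of $K:=C_G(P)$. Any $p$-subgroup of $K$ commutes with $P$ and is therefore contained in $P\cap C_G(P)=Z(P)$, so $Z(P)$ is the central Sylow $p$-subgroup of $K$ and $|K|=|Z(P)|\cdot|G|_{p'}$. Counting orders in $PK$ gives $|PK|=|G|$, whence $G=P\,C_G(P)$; since both $P$ and $C_G(P)$ normalize $P$, this yields $P\trianglelefteq G$, so $P=\mathrm{O}_p(G)$. Finally, a group with central Sylow $p$-subgroup is $p$-nilpotent, so $K=Z(P)\times M$ with $M=\mathrm{O}_{p'}(K)$ of order $|G|_{p'}$. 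Then $G=PK=PM$ with $P\cap M=1$ and $[P,M]=1$ (because $M\le C_G(P)$), which makes both factors normal and gives $G=P\times M=\mathrm{O}_p(G)\times\mathrm{O}_{p'}(G)$.

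The point requiring the most care is the second paragraph: one must fix $P$ before invoking conjugacy, so that for \emph{each} $r$ the commuting Sylow $r$-subgroup lands in the same centralizer $C_G(P)$; selecting the Sylow subgroups independently would scatter them across different conjugates and wreck the order count. The remaining ingredients — the reduction of $\tfrac{p+r-1}{pr}$ to its worst case $r=q$, and the standard fact that a central Sylow $p$-subgroup forces a normal $p$-complement — are routine.
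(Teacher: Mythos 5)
Your argument is correct and follows essentially the same route as the paper's proof: fix one Sylow $p$-subgroup $P$, use conjugacy plus Remark \ref{xy} and Lemma \ref{lem:easy}(2) to place a full Sylow $r$-subgroup inside $C_G(P)$ for every $r\neq p$, conclude $G=PC_G(P)$, and extract a $p$-complement via Schur--Zassenhaus (your ``central Sylow subgroup forces a normal $p$-complement'' step is exactly the paper's appeal to Schur--Zassenhaus inside $C_G(P)$). The extra details you supply about $Z(P)$ being the Sylow $p$-subgroup of $C_G(P)$ and the order count are just a more explicit version of the paper's deduction that $G=PC_G(P)$.
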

\begin{proof} 
Let us fix a Sylow $p$-subgroup $P$ of $G$.    Let $r\neq p$ and let $R$ be a Sylow $r$-subgroup of $G$ such that 
\[ \pr(R,P) > \frac{p+q-1}{pq}.\] 
If $[R,P]\neq 1$, then by Lemma \ref{lem:easy} (2) and Remark \ref{xy} 
 we have 
\[ \pr(R,P) \le \frac{p+r-1}{pr}  \le \frac{p+q-1}{pq},   \]
       a contradiction. 

This implies
that $C_G(P)$ contains a Sylow $r$-subgroup of $G$ for every $r\neq p$.
We deduce that $G=PC_G(P)$. By the Schur-Zassenhaus theorem (see Theorem 3.8 in \cite{Isaacs}), $C_G(P)$
contains a $p$-complement $H$ and therefore $G= P\times H.$
     \end{proof} 
     
     Theorem \ref{main3} (1) is now straightforward.
\begin{proposition}\label{pq}
Let $p_1, p_2$ be the smallest prime divisors of $|G|$ and suppose that $\prp^*(G) > (p_1 + p_2 - 1)/(p_1p_2)$. Then G is nilpotent.
	\end{proposition}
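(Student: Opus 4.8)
The plan is to apply Proposition \ref{nice} separately to each prime $p \in \pi(G)$ and thereby force every Sylow subgroup of $G$ to be normal. First I would observe that, directly from the definition of $\prp^*$, imposing the commuting-probability condition on fewer pairs of primes can only relax the requirement: the pairs governing $\prp^*_G(p,p')$ (those of the form $(p,s)$ with $s$ a prime divisor of $|G|$ distinct from $p$) form a subset of the pairs governing $\prp^*(G)=\prp^*_G(\pi,\pi)$. Hence $\prp^*_G(p,p') \ge \prp^*(G) > (p_1+p_2-1)/(p_1p_2)$ for every $p \in \pi(G)$.

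The key point is then to compare, for a fixed $p$, the global threshold $(p_1+p_2-1)/(p_1p_2)$ with the prime-specific threshold $(p+q-1)/(pq)$ appearing in Proposition \ref{nice}, where $q$ is the smallest prime in $\pi(G)\setminus\{p\}$. I would argue by cases. If $p \in \{p_1,p_2\}$, then the smallest prime of $\pi(G)$ different from $p$ is the other of $p_1,p_2$, so $\{p,q\}=\{p_1,p_2\}$ and the two thresholds coincide. If instead $p > p_2$, then $q = p_1$, and since $p > p_2$, the monotonicity recorded in Remark \ref{xy} (the quantity $(x+y-1)/(xy)$ does not increase when a variable $\ge 1$ is increased) gives $(p+p_1-1)/(pp_1) \le (p_2+p_1-1)/(p_2p_1)$. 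In every case we obtain
\[
\frac{p+q-1}{pq} \;\le\; \frac{p_1+p_2-1}{p_1p_2} \;<\; \prp^*_G(p,p').
\]

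Consequently the hypothesis of Proposition \ref{nice} is satisfied for every prime $p \in \pi(G)$, yielding $G = {\rm O}_p(G) \times {\rm O}_{p'}(G)$. In particular ${\rm O}_p(G)$ is a normal Sylow $p$-subgroup of $G$. Since this holds for every prime divisor of $|G|$, the group $G$ is the internal direct product of its (normal) Sylow subgroups and is therefore nilpotent.

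The step I expect to demand the most attention is the case analysis showing that the prime-specific threshold never exceeds the global one; everything else is a direct invocation of Proposition \ref{nice} together with the definition of $\prp^*$. It is worth noting that the two smallest primes $p_1,p_2$ already realize the largest threshold, so the strictness of the global hypothesis is precisely what is needed to handle the extremal case $p \in \{p_1,p_2\}$, where the two thresholds agree.
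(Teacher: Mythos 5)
Your proof is correct and follows essentially the same route as the paper's: deduce $\prp^*_G(p,p')\ge \prp^*(G)$ from the definition, compare the prime-specific threshold $(p+q-1)/(pq)$ with the global one via the monotonicity in Remark \ref{xy}, and invoke Proposition \ref{nice} for every prime. Your case analysis is just a more explicit (and correctly oriented) version of the paper's one-line appeal to that remark, whose displayed inequality is in fact printed with the direction reversed; the inequality you state, $(p+q-1)/(pq)\le (p_1+p_2-1)/(p_1p_2)$, is the one actually needed.
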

\begin{proof}
Remark \ref{xy} implies that $(p + q - 1)/pq  \ge (p_1 + p_2 - 1)/p_1p_2$
  for every pair of distinct prime divisors $p$ and $q$ of $|G|.$   
Now let $p$ be a prime divisor of $G$. Then the assumptions of Proposition \ref{nice} are satisfied and so $G={\rm{O}}_{p}(G) \times {\rm{O}}_{p\prime}(G)$. This holds for every prime divisor of $|G|$, therefore the group $G$ is nilpotent.
\end{proof}

We remark that Proposition \ref{cor:nilpotent} is actually a corollary of Proposition \ref{pq}, since $(2+3-1)/6=2/3$. 
Furthermore,  Proposition \ref{cor:nilpotent} implies that if $G$ is a  finite  group  such that $\prp^*(G) >1/2$, then for $\pi=\{ p \in \pi(G) \mid p \ge 1/(2 \prp^*(G) -1) \}$ we have $G= O_\pi (G) \times O_{\pi'}(G)$  and moreover $O_\pi(G)$ is nilpotent. 

\begin{example} \label{A5} {\rm For the alternating group $\A_5$ we have $\prp^* (\A_5)=2/5$. 
 Indeed, if $P_p$ denotes a  Sylow $p$-subgroup of $\A_5$,  by  Remark \ref{nopq} we have 
 \[ \Pr(P_2,P_3)=1/2 
 \quad \Pr(P_2,P_5)=2/5 
  \quad \Pr(P_3,P_5)=7/15. 
   \]
	}
  \end{example}
	
 The following proposition establishes  Theorem \ref{main3} (2). 

\begin{proposition}\label{cor:soluble} Let $G$ be a  finite  group  such that $\prp^*(G)>2/5$. Then $G$ is  soluble. 
\end{proposition}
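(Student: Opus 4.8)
The plan is to argue by induction on $|G|$. Suppose the result fails and let $G$ be a non-soluble group with $\prp^*(G)>2/5$ of least order. Pick a minimal normal subgroup $M$ of $G$. By Lemma \ref{lem:trivial*} we have $\prp^*_{G/M}(\pi,\pi)\ge\prp^*(G)>2/5$ and $\prp^*_M(\pi,\pi)\ge\prp^*(G)>2/5$ for any set of primes $\pi\supseteq\pi(G)$. Since $G/M$ has smaller order, minimality forces $G/M$ to be soluble; as $G$ is not soluble, $M$ is not soluble either. Being a minimal normal subgroup, $M$ is characteristically simple, so $M=S_1\times\cdots\times S_k$ is a direct product of copies of a non-abelian simple group $S$.

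I would next reduce to the case $G=S$. If $M\ne G$, then $M$ is a non-soluble group of smaller order with $\prp^*(M)>2/5$, contradicting the minimality of $G$. Hence $M=G$, so that $G=S_1\times\cdots\times S_k$ is itself a minimal normal subgroup of $G$; but each factor $S_i$ is a normal subgroup of $G=S^k$, so minimality forces $k=1$. Therefore $G=S$ is a non-abelian simple group with $\prp^*(G)>2/5$, and it remains to show that this is impossible.

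Thus the task becomes: exhibit, in an arbitrary non-abelian simple group $G$, two distinct primes $p,q\in\pi(G)$ such that $\pr(P,Q)\le 2/5$ for every Sylow $p$-subgroup $P$ and every Sylow $q$-subgroup $Q$. I would take $p=2$: by the Feit--Thompson theorem $2\in\pi(G)$, and since $2$ is the smallest prime, a cyclic Sylow $2$-subgroup would yield a normal $2$-complement by Burnside's transfer theorem, contradicting simplicity; hence a Sylow $2$-subgroup $P$ is non-cyclic and $|P|\ge 4$. The goal is then to find an odd prime $q\ge 5$ that is non-adjacent to $2$ in the prime graph of $G$, i.e. such that $G$ has no element of order $2q$. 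For such a prime, Remark \ref{nopq} gives $\pr(P,Q)=(|P|+|Q|-1)/(|P|\,|Q|)$, and since $|P|\ge 4$, $|Q|\ge q\ge 5$, and the right-hand side is a decreasing function of $|P|$ and of $|Q|$, we get $\pr(P,Q)=\frac{1}{|P|}+\frac{1}{|Q|}-\frac{1}{|P|\,|Q|}\le\frac14+\frac15-\frac{1}{20}=\frac25$, the desired contradiction. The group $\A_5$, where this estimate holds with equality for $(p,q)=(2,5)$, shows the bound $2/5$ is sharp; moreover, since only pairs $(2,q)$ are used, a successful argument would in fact establish the stronger conclusion $\prp^*_G(2,2')\le 2/5$ for simple $G$, i.e. Theorem \ref{main-2-2'}.

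The hard part is the existence of such a prime $q$, to be established without the classification of finite simple groups. In prime-graph terms, $2$ and $q$ are non-adjacent precisely when $q$ divides $|G|$ but divides no involution centralizer $|C_G(t)|$, so I would look for an odd prime $q\ge 5$ whose Sylow normalizer avoids the involutions of $G$, drawing on CFSG-free structural input --- Feit--Thompson, Burnside's transfer theorem, and the analysis of involution centralizers and Sylow normalizers --- rather than a case-by-case inspection of the simple groups. In the degenerate situation where $2$ is adjacent to every odd prime, one must instead play off two odd primes or invoke the sharper estimate of Lemma \ref{lem:remark} with $n=|P:C_P(Q)|$ and $m=\min_{x}|Q:C_Q(x)|$ chosen large enough that $(n+m-1)/(nm)\le 2/5$; securing these indices uniformly across all non-abelian simple groups is the crux of the argument.
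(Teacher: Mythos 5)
Your reduction to a nonabelian finite simple group is correct and is exactly the paper's first step (minimal counterexample plus Lemma \ref{lem:trivial*}). The problem is that everything after that reduction is a plan rather than a proof, and the plan you sketch does not go through. You propose to find an odd prime $q\ge 5$ that is non-adjacent to $2$ in the prime graph, so that Remark \ref{nopq} applies with $|P|\ge 4$ and $|Q|\ge 5$. But such a prime need not exist: many nonabelian simple groups (large alternating groups, most groups of Lie type) contain elements of order $2q$ for every odd prime $q$ dividing the order, so the prime graph has $2$ adjacent to everything and Remark \ref{nopq} is never applicable. You acknowledge this "degenerate situation" and defer to Lemma \ref{lem:remark} with indices "chosen large enough," but securing those indices is precisely the content of the proposition, and you give no argument for it. Proving non-adjacency results in the prime graph without CFSG is also not a realistic fallback. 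So the core of the statement is unestablished.

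The paper's actual argument avoids the prime graph entirely and does not need trivial centralizers. It fixes a prime $q\ge 5$ in $\pi(G)$ (if none exists, $|G|=2^a3^b$ and Burnside gives solubility) and a Sylow $q$-subgroup $Q$, and applies Lemma \ref{lem:easy}(1) together with Remark \ref{xy}: for any $p\ne q$ and a Sylow $p$-subgroup $P$ with $\pr(P,Q)>2/5$, if $|P:C_P(Q)|\ge 4$ then $\pr(P,Q)\le (4+5-1)/(4\cdot 5)=2/5$, a contradiction. Hence $|P:C_P(Q)|\le 3$ for every $p\ne q$, which forces $|G:QC_G(Q)|$ to divide $6$; simplicity then embeds $G$ into $\Sym_6$, leaving only $\A_5$ and $\A_6$, both of which are excluded by direct computation. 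The key tool you are missing is that Lemma \ref{lem:remark}/\ref{lem:easy} converts the hypothesis $\pr(P,Q)>2/5$ into a uniform bound on $|P:C_P(Q)|$ for \emph{all} primes $p$ simultaneously, producing a proper subgroup of tiny index, rather than trying to exhibit a single favourable pair of primes.
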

\begin{proof} Let $G$ be a counterexample of minimal order. As the hypothesis is inherited by quotients and normal subgroups, $G$ is a nonabelian finite simple group. 
	
Let $q\geq 5$ be a prime divisor of $|G|$ and let $Q$ be a Sylow $q$-subgroup of $G$. 
 For every prime $p$ with $p\neq q$,  let $P$ be a  Sylow $p$-subgroup of $G$ such that $\pr(P,Q)>2/5$. 
If   $|P:C_P(Q)| \ge 4 $, then,  by Remark \ref{xy} and Lemma \ref{lem:easy},  we have 
		  \[  \pr (P,Q) \le \frac{4+5-1}{4 \cdot 5} =\frac {2}{5} , \]
 a contradiction.  
 So  $|P:C_P(Q)|  <4$ for every prime $p \neq q$. In particular, $C_G(Q)$ contains a Sylow $p$-subgroup for every prime $p >5$, a subgroup of index at most  $2$ in a Sylow $2$-subgroup and  a subgroup of index  at most  $3$ in a Sylow $3$-subgroup. We conclude that the index of the subgroup $H=QC_G(Q)$ divides $6$. 
 
 Since $G$ is simple, $H$ is a proper subgroup of $G$ and $G$ is isomorphic to a subgroup of $S_6,$ as it acts faithfully by permuting the cosets of $H$. The only possibilities are $G=\A_5$ and $G=\A_6.$ But this leads to a contradiction because
  $\prp^* (\A_5)=2/5$ (see Example \ref{A5}) and if
  $G=\A_6$, then, by  Remark \ref{nopq},  $\pr(P,Q)=(8+9-1)/(8 \cdot 9)=2/9 <2/5$ for any Sylow $2$-subgroup $P$ and any Sylow $3$-subgroup $Q$ of $G$.
 	\end{proof}
	
Applying less elementary arguments, we will now show that the solubility of $G$ can be detected by checking only some subsets of $\pi(G)$. 

In 1904  Burnside proved a theorem on the solubility of a group of order $p^\alpha q^\beta,$ which
became an important tool in the theory of finite groups. Burnside's theorem is based on the following
lemma on nonsimplicity, which is known as Burnside's $p^\alpha$-Lemma (see e.g. Theorem 3.9 in \cite{characters}). 

\begin{lemma}[Burnside's $p^\alpha$-Lemma] \label{burnsideLemma}
If the size of a conjugacy class of a finite group $G$  is a power of a prime number, then G is not a simple group.
\end{lemma}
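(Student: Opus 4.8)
The plan is to prove this by character-theoretic means, since the hypothesis concerns a conjugacy class whose size is a prime power and the conclusion is nonsimplicity. Suppose $G$ has a conjugacy class $x^G$ of size $p^a$ with $a \ge 1$ (so $x \ne 1$), and assume for contradiction that $G$ is simple. The key inequality I would aim to establish is that for some nontrivial irreducible character $\chi$ of $G$ whose degree is not divisible by $p$, the value $\chi(x)$ is nonzero. This forces a divisibility/algebraic-integer condition that leads to the contradiction.

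First I would use the column orthogonality relation for the conjugacy class of $x$ against the trivial class, namely $\sum_{\chi \in \mathrm{Irr}(G)} \chi(1)\chi(x) = 0$, where the sum runs over all irreducible characters. Separating off the trivial character, which contributes $1$, I would rewrite this as $1 + \sum_{\chi \ne 1} \chi(1)\chi(x) = 0$. Grouping the nontrivial characters by whether $p \mid \chi(1)$, the crucial step is to observe that if for every nontrivial $\chi$ with $p \nmid \chi(1)$ we had $\chi(x) = 0$, then dividing by $p$ would give $1/p$ expressed as an algebraic integer (a sum of terms $\frac{\chi(1)}{p}\chi(x)$, each an algebraic integer), which is impossible since $1/p \notin \Z$. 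Hence there exists a nontrivial irreducible $\chi$ with $p \nmid \chi(1)$ and $\chi(x) \ne 0$.

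The heart of the argument, and the step I expect to be the main obstacle, is the standard lemma that if $\gcd(\chi(1), |x^G|) = 1$ and $\chi(x) \ne 0$, then $x$ lies in the center of the character $\chi$, i.e.\ $|\chi(x)| = \chi(1)$. Here this applies because $|x^G| = p^a$ and $p \nmid \chi(1)$, so the two are coprime. The proof of this sublemma goes through the algebraic integer $\frac{|x^G|\,\chi(x)}{\chi(1)}$ together with a B\'ezout combination of $\frac{\chi(x)}{\chi(1)}$, concluding that $\frac{\chi(x)}{\chi(1)}$ is an algebraic integer of absolute value at most $1$ whose associated ratio, after an averaging/Galois argument over conjugate character values, has absolute value exactly $1$. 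Getting the averaging and the bound $|\chi(x)| \le \chi(1)$ to combine cleanly is the delicate point.

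Once $x$ lies in the center $Z(\chi) = \{g : |\chi(g)| = \chi(1)\}$ of a nontrivial character, I would finish as follows. The set $Z(\chi)$ is a normal subgroup of $G$ containing the nontrivial element $x$. Since $G$ is simple and $Z(\chi)$ is normal, either $Z(\chi) = 1$, contradicting $x \in Z(\chi)$ with $x \ne 1$, or $Z(\chi) = G$, which forces $\chi$ to be a faithful-free character with abelian image, so $\ker\chi$ is a proper normal subgroup; simplicity then forces $\ker\chi = 1$ and $G$ abelian, whence $G$ is cyclic of prime order and $x^G = \{x\}$ has size $1$, contradicting $a \ge 1$. Either way we reach a contradiction, so $G$ is not simple.
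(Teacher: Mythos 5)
Your proof is correct and is the standard character-theoretic argument: column orthogonality to produce a nontrivial $\chi$ with $p\nmid\chi(1)$ and $\chi(x)\neq 0$, the algebraic-integer/B\'ezout step forcing $|\chi(x)|=\chi(1)$, and then the normality of $Z(\chi)$ in a simple group. The paper offers no proof of its own, citing Theorem 3.9 of Isaacs' \emph{Character theory of finite groups}, whose proof is exactly this argument, so your approach coincides with the intended one.
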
 

We can now prove Theorem \ref{main-odd}, which we restate here.
\medskip

{\sc Theorem \ref{main-odd}}. {\it Let $G$ be a finite group. If $\prp^*_G (2', 2')>7/15$, then $G$ is soluble. }
\begin{proof}
	Let $G$ be a counterexample of minimal order. As the hypothesis is inherited by quotients and normal subgroups, $G$ is a nonabelian simple group. 
	
	Let $q \neq 2$ be the smallest odd prime divisor of $|G|$ and let  $Q$ be a Sylow $q$-subgroup of $G$. 
	 For every prime $p \neq 2, q$ let $P$ be a Sylow $p$-subgroup of $G$ such that 
	$\pr(P,Q) >7/15$. Since $p \ge 5$ and $q \ge 3$, if $[P, Q] \neq 1$, then  by Remark \ref{xy} and Lemma \ref{lem:easy}  we have that 
		  \[ \frac{7}{15} < \pr (P,Q) \le \frac{3+5-1}{3 \cdot 5} =\frac {7}{15} , \]
		  a contradiction. 
	Therefore   $C_G(Q)$ contains a Sylow $p$-subgroup of $G$ for every $p \neq 2, q$. In particular $|G:QC_G(Q)|$ is a 2-power. 
		But then for $1\neq z \in Z(Q)$ the index $|G:C_G(z)|$ is a $2$-power as well. 
	In view of Lemma \ref{burnsideLemma}, this is a contradiction. 
\end{proof} 

We now proceed to establish some results which are needed in the proofs of Theorems \ref{main-2-2'} and \ref{main-2-q}. 

\begin{proposition}\label{2-q}  
Let $G$ be a finite group.
 \begin{enumerate}
\item  If $p$ is a prime and $\prp^*_G(2,p')>1/2$, 
 then $G$ is soluble.
\item  If  $\prp^*_G(2, 3')>2/5$, 
 then $G$ is soluble.
\end{enumerate}
\end{proposition}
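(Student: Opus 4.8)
The plan is to argue by contradiction from a minimal counterexample. Suppose $G$ has least order subject to satisfying the hypothesis of~(1) (respectively~(2)) while failing to be soluble. Since the hypotheses $\prp^*_G(2,p')>1/2$ and $\prp^*_G(2,3')>2/5$ are inherited by quotients and by normal subgroups (Lemma~\ref{lem:trivial*}), minimality forces $G$ to be a nonabelian simple group. I would first dispose of the case $\pi(G)\subseteq\{2,p\}$ (in~(2), $p=3$): then $|G|=2^ap^b$ and $G$ is soluble by Burnside's $p^\alpha q^\beta$-theorem, a contradiction. Hence there is a prime $q\in\pi(G)$ with $q\ne 2,p$, and the whole strategy is to exhibit a nontrivial element whose conjugacy class has $p$-power size, so that Burnside's $p^\alpha$-Lemma (Lemma~\ref{burnsideLemma}) contradicts the simplicity of $G$.

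The first step is a centralizer bound. Fix a Sylow $2$-subgroup $P$. By conjugacy of Sylow subgroups, for each odd prime $q\ne p$ dividing $|G|$ there is a Sylow $q$-subgroup $Q_q$ with $\pr(P,Q_q)>\epsilon$, where $\epsilon=1/2$ in~(1) and $\epsilon=2/5$ in~(2). If $|P:C_P(Q_q)|\ge 4$, then Lemma~\ref{lem:easy}(1) gives $\pr(P,Q_q)\le (q+3)/(4q)$, which is at most $1/2$ for $q\ge 3$ and at most $2/5$ for $q\ge 5$; since $q\ge 3$ in~(1) and $q\ge 5$ in~(2), this contradicts the choice of $Q_q$. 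Thus $|P:C_P(Q_q)|\le 2$ for every such $q$, so each $C_P(Q_q)$ is normal of index at most $2$ in $P$ and therefore contains $\frat(P)$. Put $D=\bigcap_{q\ne 2,p}C_P(Q_q)$, a normal subgroup of $P$ with $\frat(P)\le D$. If $D\ne 1$, I would choose $1\ne z\in D\cap Z(P)$ (nonempty, as $D$ is a nontrivial normal subgroup of the $2$-group $P$). Then $P\le C_G(z)$ because $z\in Z(P)$, and $Q_q\le C_G(z)$ for all $q\ne 2,p$ because $z\in C_P(Q_q)$; hence $|G:C_G(z)|$ is divisible by no prime other than $p$, i.e.\ it is a power of $p$, and Burnside's $p^\alpha$-Lemma gives the contradiction. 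This disposes of every configuration with $\frat(P)\ne 1$.

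It remains to treat $D=1$, which forces $\frat(P)=1$, so that $P$ is elementary abelian. Here $P$ is abelian, so $G$-fusion in $P$ is controlled by $N=N_G(P)$, and since $G=G'$ the focal subgroup theorem yields $P=P\cap G'=[P,N]$. As $W=N/C_G(P)$ has odd order (a Sylow $2$-subgroup of $N$ lies in $C_G(P)\ge P$) acting coprimely on $P$, this means $C_P(W)=1$. I would still aim for an involution $t\in P$ with $|t^G|$ a $p$-power. Every $t\in P$ satisfies $P\le C_G(t)$, so $|t^G|$ is odd, and $q\nmid|t^G|$ precisely when some Sylow $q$-subgroup centralizes $t$; since $Q_q$ centralizes the whole hyperplane $C_P(Q_q)$ and fusion is controlled by $N$, this holds as soon as the $N$-orbit $t^G\cap P=t^N$ meets $C_P(Q_q)$. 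Thus it suffices to find one $N$-orbit of involutions meeting $C_P(Q_q)$ for every $q\ne 2,p$: for such $t$ the class size $|t^G|$ is prime to $2$ and to all $q\ne 2,p$, hence a $p$-power, and Burnside finishes the proof. When $P$ has rank $2$ this is immediate: $W$ permutes the three involutions of $P$ transitively (because $C_P(W)=1$), so there is a single class of involutions, and every subgroup of index at most $2$ contains one of them.

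The hard part will be exactly this last step for $P$ of rank at least $3$: the subspaces $C_P(Q_q)$ need not be $N$-invariant, and a priori the $W$-orbits of involutions could be distributed among them so that every orbit misses some $C_P(Q_q)$. To close the gap I would analyse the coprime $W$-module $P$ more carefully, using $C_P(W)=1$ together with the fact that each $C_P(Q_q)$ is a $Q_q$-centralized hyperplane, in order to show either that some $N$-orbit is unavoidable by all the hyperplanes $C_P(Q_q)$, or that the configuration $C_P(Q_q)=1$ is forced for some $q$ (which would give $|P:C_P(Q_q)|\ge 4$ and eliminate this case outright via the centralizer bound above).
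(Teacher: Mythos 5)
Your opening moves coincide with the paper's: the minimal counterexample reduces to a nonabelian simple group, the bound $|P:C_P(Q_q)|\le 2$ follows from Lemma \ref{lem:easy} and Remark \ref{xy}, and when $\frat(P)\ne 1$ (or, in your slightly more general setup, when $D=\bigcap_q C_P(Q_q)\ne 1$) a nontrivial element of $Z(P)$ lying in that intersection has conjugacy class of $p$-power size, so Burnside's $p^\alpha$-Lemma contradicts simplicity. All of that is sound and matches the paper.

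The genuine gap is exactly where you locate it: the case where $P$ is elementary abelian of rank at least $3$. Your plan --- control of fusion by $N_G(P)$, the focal subgroup theorem, $C_P(W)=1$, and then finding an $N$-orbit of involutions meeting every subgroup $C_P(Q_q)$ --- is not carried out, and it is unlikely to be completable by elementary means: already for $|P|=16$ there are odd-order groups $W$ acting on $P$ with $C_P(W)=1$ whose orbits on the $15$ involutions all have size $3$, and nothing in your setup prevents each such orbit from lying in the complement of one of the index-two subgroups $C_P(Q_q)$. The ingredient the paper uses to close this case is the Walter--Bender theorem (\cite{walter}, \cite[2.2]{bender}): a finite simple group with abelian Sylow $2$-subgroups has a single conjugacy class of involutions. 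Granting that, all involution centralizers have the same index $n$; since $C_P(Q_q)\ne 1$ contains an involution centralizing $Q_q$, no prime $q\ne 2,p$ divides $n$, and $2\nmid n$ because $P$ is abelian, so $n$ is a $p$-power and Burnside's $p^\alpha$-Lemma finishes the proof. Without importing this deep result (or an equivalent), your argument does not go through.
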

\begin{proof} 
	Let $G$ be a counterexample of minimal order to the first statement. As the hypothesis is inherited by quotients and normal subgroups, $G$ is a nonabelian simple group. 
	
	Let $P$ be a Sylow $2$-subgroup of $G$. 
 For every prime $q$ such that $q\neq 2, p$  let $Q$ be a Sylow $q$-subgroup of $G$ such that $\pr(P,Q)>1/2$. 
If   $|P:C_P(Q)| \ge 4 $, then  by Remark \ref{xy} and Lemma \ref{lem:easy}  we have that 
\begin{equation}\label{1/2}
		   \pr (P,Q) \le \frac{4+3-1}{4 \cdot 3} =\frac {1}{2} , 
\end{equation}   
a contradiction.  
 So  
 \[|P:C_P(Q)|  \le 2,\]
  and in particular $C_P(Q)$ contains the Frattini subgroup $\Phi=\frat(P)$ of $P$, for every prime $q \neq 2, p$. 
   Then $C_G(\Phi)$ contains a Sylow $q$-subgroup of $G$ for every $q  \neq 2, p$. 
   
   If $\Phi \neq 1$ and $1\neq z \in Z(P)\cap \Phi$, it follows that $|G:C_G(z)|$ is a $p$-power. Then, by Lemma \ref{burnsideLemma}, $G$ is not simple, a contradiction. 
    This proves that   $\Phi=1$ and $P$ is elementary abelian.  
   In particular, as $C_P(Q) \neq 1$ and  a Sylow $2$-subgroup of a nonabelian finite simple group has order at least $4$ (see for instance Corollary 5.14 in \cite{Isaacs}), for every $q\neq p$ there is an involution $z_q\in P$ centralizing a Sylow $q$-subgroup of $G$.
	
 Note that in a finite simple group with elementary abelian Sylow $2$-subgroups 
 	all involutions are conjugate (see  \cite{walter} or \cite[2.2]{bender}). Thus, their centralizers are conjugate and in particular have the same order. We know that the  involution $z_q$ centralizes a Sylow $q$-subgroup of $G$. In particular, $q$ does not divide the index of  $C_G(z_q)$ in $G$. This happens for every $q\neq p$. Taking into account that the involutions are conjugate, we deduce that the index of the centralizer of any involution is a power of $p$. In view of Lemma \ref{burnsideLemma} this is a contradiction.

The second statement follows by the same arguments using the fact that if $p=3$, then for every odd $q\neq p$ the inequality \eqref{1/2} takes the form
\begin{equation*}
		   \pr (P,Q) \le \frac{4+5-1}{4 \cdot 5} =\frac {2}{5}.
\end{equation*}   
\end{proof} 

One immediate corollary of Proposition \ref{2-q}  is  Theorem \ref{main-2-2'}. 
\medskip

{\sc Theorem \ref{main-2-2'}}. {\it Let $G$ be a finite group. 
  If $\prp^*_G(2, 2')>2/5$, then $G$ is soluble.}
  \begin{proof}
Note that if $\prp^*_G(2, 2')>2/5$, then in particular $\prp^*_G(2, 3')>2/5$, thus the result follows follows from Proposition \ref{2-q} (2). 
\end{proof}

We remark that  in $\A_5$, as mentioned above,
\[ \Pr(P_3,P_5)=7/15, 
 \quad \Pr(P_2,P_5)=2/5, 
  \quad \Pr(P_2,P_3)=1/2, 
   \]
 for  every Sylow 3-subgroup $P_3$, every Sylow $5$-subgroup $P_5$ and every Sylow $2$-subgroup $P_2.$ 
 Thus
 \begin{eqnarray*} 
 \prp^*_{\A_5} (2',2') &=&7/15,\\
 \prp^*_{\A_5} (2,2')\, &=&2/5,  \quad \prp^*_{\A_5} (2,5')=1/2, \quad \prp^*_{\A_5} (2,3')=2/5.
 \end{eqnarray*}
 Therefore the values in Theorems \ref{main-odd} and \ref{main-2-2'}, Proposition \ref{2-q} (2)  and Proposition \ref{2-q} (1) for $p=5$,  are best possible.

Using the classification of finite simple groups we can sharpen the bound in Proposition \ref{2-q} (1)  in the case where $p \neq 5$. 

We remark that Proposition \ref{7} is the only result in which the classification of finite simple groups is used.

\begin{proposition}\label{7}  Let $G$ be a finite group and $p\neq 2$ a prime. 
 \begin{enumerate}
\item  If $p \neq 5, 7$ and $\prp^*_G(2,p') >2/5$, then $G$ is soluble.
\item  If $\prp^*_G(2,7') >5/12$, then $G$ is soluble.
\end{enumerate}
\end{proposition}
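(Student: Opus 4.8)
The plan is to argue by minimal counterexample, so that $G$ becomes a nonabelian finite simple group, and then to invoke the classification to severely constrain the Sylow $2$-structure. As in Proposition \ref{2-q}, fix a Sylow $2$-subgroup $P$ of $G$. For each prime $q \neq 2, p$ choose a Sylow $q$-subgroup $Q$ with $\pr(P,Q)$ exceeding the stated bound. In case (1), where $p \neq 5, 7$, the threshold $2/5$ combined with Lemma \ref{lem:easy} and Remark \ref{xy} forces $|P : C_P(Q)| \le 2$ whenever $q \ge 5$ (since for $q \ge 5$ one has $(4+q-1)/(4q) \le 7/20 \le 2/5$, while $|P:C_P(Q)| \ge 4$ is excluded), and handling $q = 3$ separately. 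The upshot, exactly as in Proposition \ref{2-q}, is that $P$ must be elementary abelian and every relevant $q \neq p$ admits an involution centralizing a full Sylow $q$-subgroup, so that the index of every involution centralizer is a power of $p$ and Burnside's $p^\alpha$-Lemma (Lemma \ref{burnsideLemma}) yields a contradiction --- \emph{provided} the simple group has elementary abelian Sylow $2$-subgroups.

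The role of the classification is therefore to enumerate the nonabelian finite simple groups with elementary abelian Sylow $2$-subgroups. By the Walter--Bender theorem (already cited as \cite{walter},\cite{bender} in the proof of Proposition \ref{2-q}), these are precisely $\PSL_2(2^f)$ with $f \ge 2$, $\PSL_2(q)$ with $q \equiv 3, 5 \pmod 8$, and the Janko group $J_1$. For each such family I would compute, using Remark \ref{nopq} where possible, the actual maximal commuting probabilities $\pr(P,Q)$ for the odd Sylow subgroups $Q$, and check that the relevant value of $\prp^*_G(2,p')$ fails to exceed the claimed threshold unless $G$ is one of the exceptional small groups (notably $\A_5 \cong \PSL_2(4) \cong \PSL_2(5)$, which realizes $2/5$ and is why $p=5$ must be excluded in (1)). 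The point of separating case (2) with the larger bound $5/12$ is that $7$ divides the order of some of these groups, so the constraint $q \neq p = 7$ removes the prime that would otherwise give a small commuting probability, and the sharper numerical value $5/12$ is then needed to rule out the surviving candidate (for instance $\PSL_2(8)$ or $J_1$, where a computation of $\pr(P_2, P_q)$ for the available odd $q$ must be shown to stay at or below $5/12$).

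The main obstacle is the explicit case analysis over the Walter--Bender list: for the infinite families $\PSL_2(q)$ one must pin down the orders of the odd Sylow subgroups and their centralizer indices in $P$, distinguishing the subcases $q \equiv 3 \pmod 8$ versus $q \equiv 5 \pmod 8$ versus $q = 2^f$, and verify in each that some admissible prime $q' \neq p$ forces $\pr(P, P_{q'})$ below the threshold. Since $P$ is elementary abelian one can often fall back on Remark \ref{nopq} or Lemma \ref{lem:easy}(1) with $a = 1$, reducing each estimate to knowing whether a single involution can centralize the whole of $Q$; the arithmetic is routine once the dichotomy ``$|P:C_P(Q)| = 1$ or $\ge 2$'' is resolved from the subgroup structure. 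The finitely many sporadic-type exceptions ($J_1$, and the small $\PSL_2$ groups) are then checked directly, and one confirms that the borderline cases excluded by the hypotheses $p \neq 5, 7$ in (1) and by the bound $5/12$ in (2) are exactly the groups that would otherwise survive.
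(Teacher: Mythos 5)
Your overall strategy (minimal counterexample, reduce to a nonabelian simple group, control $|P:C_P(Q)|$ for odd primes $q\neq p$, then invoke a classification) matches the paper's opening moves, but the pivotal reduction in your second step fails, and it sends you to the wrong classification theorem. At the threshold $2/5$ the prime $q=3$ is \emph{not} controlled: Lemma \ref{lem:easy} only gives $\pr(P,Q)\le (4+3-1)/12=1/2$ when $|P:C_P(Q)|\ge 4$, and $1/2>2/5$, so nothing forces $|P:C_P(Q)|\le 2$ for $q=3$ (``handling $q=3$ separately'' is precisely the difficulty, and you never say how). Consequently, whether or not $\frat(P)$ is trivial, all one can conclude is that some involution $z$ satisfies $|G:C_G(z)|=3^ap^b$ --- a product of \emph{two} prime powers. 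Burnside's $p^\alpha$-Lemma does not apply to such an index, so there is no contradiction in the case $\frat(P)\neq 1$ and hence no reduction to elementary abelian Sylow $2$-subgroups. (In Proposition \ref{2-q} that reduction works only because the stronger hypothesis $>1/2$, or the exclusion $q\neq p=3$ in part (2) there, controls the prime $3$ as well.)

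Because of this, the Walter--Bender list is the wrong tool. The decisive witness is $\PSL(2,7)$: its Sylow $2$-subgroups are dihedral of order $8$ (not elementary abelian), the centralizer of an involution has index $21=3\cdot 7$, and $\prp^*_{\PSL(2,7)}(2,7')=5/12>2/5$. Your scheme for part (2) names $\PSL(2,8)$ and $J_1$ as the surviving candidates but can never see $\PSL(2,7)$, which is the actual extremal group forcing the bound $5/12$; any argument along your lines that succeeded at threshold $2/5$ for $p=7$ would prove a statement that $\PSL(2,7)$ refutes. The paper instead applies the Li--Li classification \cite{twop} of simple groups possessing a subgroup $J$ with $|G:J|$ a product of two prime powers (taking $J=C_G(z)$, so that $Z(J)\neq 1$), and then works through the tables of that paper, with Zsigmondy-prime arguments and a separate treatment of the defining characteristics $2$, $3$ and $p$, to isolate $\PSp(6,2)$, $\PSL(2,8)$ and $\PSL(2,q)$ with $(q\pm 1)/2$ a power of $3$, finally leaving only $\PSL(2,7)$. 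That case analysis is the real content of the proof and is absent from your proposal.
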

\begin{proof}
 We will first prove the following statement: 

\begin{itemize}
\item[$(**)$] If $p\neq 2,5$ and $G$ is a finite nonabelian simple group such that $\prp^*_G(2,p') >2/5$, then $p=7$ and  $G$ is  isomorphic to $\PSL(2,7)$.
\end{itemize}

	Let $\pi=\pi(G)\setminus \{2,3,p\}$ and let $P$ be a Sylow $2$-subgroup of $G$. As in Proposition \ref{2-q}, since 
	$(4+5-1)/20=2/5$, 
	it follows   from Remark \ref{xy} and Lemma \ref{lem:easy}  that  for every prime for $q\in\pi$ there exists a Sylow $q$-subgroup $Q$ of $G$  such that $|P:C_P(Q)|\leq 2.$ 
	
	Let $\Phi=\frat(P).$ If $\Phi=1,$ then the involutions of $G$ are conjugate (see  \cite{walter} or \cite[2.2]{bender}). Since
	for every $q\notin \{p,3\},$ the centralizer of a Sylow $q$-subgroup contains at least one involution, we deduce that whenever $z\in G$ is an involution, $|G:C_G(z)|=3^ap^b$ for suitable 
	 integers $a$ and $b$. 
	 If $\Phi\neq 1$, then $C_G(\Phi)$ contains a Sylow $q$-subgroup of $G$, for every $q \in \pi.$ For an involution $z \in Z(P)\cap \Phi,$ again  $|G:C_G(z)|=3^ap^b$, where $a$ and $b$ are suitable 
	  integers. Moreover, by Lemma \ref{burnsideLemma}, $p \neq 3$ and $a, b \neq 0$. 
	
	The pairs $(J,G)$, where $J$ is a subgroup of $G$ with $|G:J|$ a product of two prime-powers are listed in \cite{twop}. 
		Using the facts that $p \neq 2, 5$ and $Z(J)\neq 1$, it can be easily checked that none of the simple groups  $G$ listed in Tables 3.1 and 3.2 of \cite{twop}   contains an involution $z$ such that $|G:C_G(z)|=3^ap^b$.
		
	So we may assume that $G$ is a simple group of Lie type. Let $r$ be the characteristic of $G$, and let $R$ be a Sylow $r$-subgroup and $P$, as above, a Sylow $2$-subgroup of $G$. Since $C_P(R)=1$ (see for instance \cite[Corollary 3.1.4]{GLS}), 
	if $r\notin \{2,3,p\}$ using Lemma \ref{lem:easy}  and Remark \ref{xy} we get
	$$\pr(P,R)\leq  \frac{5+4-1}{5\cdot 4} 
	=\frac{2}{5}.$$ This contradicts the hypothesis.
	
	Similarly, if $r=3,$ then
	$$\pr(P,R)\leq \frac{|P|+2}{3|P|}.$$ Therefore $(|P|+2)/3|P|>2/5$ and this implies $|P|\leq 8.$ 
	So $16$ does not divide $|G|.$ Notice that in this case either $G= \PSL(2,3^n)$ or $G= {^2G_2}(3^{2n+1})$ (see e.g. \cite{gg} and \cite{walter}). 
	If  $G=\PSL(2,3^n)$,  then $G$ does not contain elements of order $6$ and
		for any Sylow $2$-subgroup $P$ and any Sylow $3$-subgroup $Q$, since $|P|\geq 4$ and $|Q|\geq 9,$ we have
		$$\pr(P,Q)=\frac{|P|+|Q|-1}{|P||Q|}\leq \frac{4+9-1}{36}=\frac{1}{3}< \frac{2}{5}.$$
	The group $G= {^2G_2}(3^{2n+1})$ has no subgroup of index $3^ap^b$ (see Table 5.1 of \cite{twop}). 
	
	Therefore we are reduced to the case where $G$ is a simple group of Lie type in characteristic $2$ or $p$. 

	Summarizing, we have to examine Tables 4.1, 4.2, 4.3, 4.4, 4.5, 5.1 in \cite{twop}, looking
	for pairs $(G,J)$, where:
	\begin{itemize}
\item	 $J=C_G(z)$ for some involution $z\in G$ (in particular $Z(J) \neq 1$), 
\item $|G:J|=3^ap^b$, with $a, b \geq 1$, $p\neq 5$,
\item  the characteristic of $G$ is $2$ or $p.$
\end{itemize}
	We also use the well known result by Zsigmondy \cite{z}, which says that given two integers $q \geq 2$ and $n \geq 3$, with $(q,n)\neq (2,6)$ there exists a prime $u$ (a Zsigmondy prime) such that $u$ divides $q^n-1$ but $u$ does not divide $q^i-1$ for $i<n.$
 So, if $(q^n-1)/(q-1)$ is a power of $3$, then $n=2$, since $3$ must be a Zsigmondy prime and it divides $q^2-1=(q-1)(q+1)$.	

 We give just one example of the arguments we used. In Table 4.4 we find that $\Omega_{2m+1}(q)$ has a subgroup $K=Z_2 \times \Omega_{2m}^+(q)$ of index $q^m(q^m+1)/2$ and $q^m=2 s^c-1$, for a prime $s$. 
 Here $q=p^b$ is odd and so $(q^m+1)/2=3^a$. As $3$ divides $q^2-1$ and $m \ge 3$, there exists a   Zsigmondy prime divisor $u$ of $q^{2m}-1$ with $u \neq 3$. 
   Since $u$ does not divide $q^m-1$, we have that $u \neq 3$ divides $q^m+1$, a contradiction. 
	
	 It turns out that a group $G$  in  Tables 4.1, 4.2, 4.3, 4.4, 4.5, 5.1 in \cite{twop}, may satisfy the above condition only if $G= \PSp(6,2)$ or 
	  $G= \PSL(2,q)$ with $q=8$ or $q=p$ and $(p-1)/2=3^a$ or $(p+1)/2=3^a$.
	  
	  We exclude $G= \PSp(6,2)$,  since in that case a direct calculation (e.g. with GAP \cite{GAP4}) shows that $\pr(P,Q)\leq  5/288,$ 
	   for every Sylow $2$-subgroup $P$ and every Sylow $3$-subgroup $Q$ of $G$.

	   Also, when $G=\PSL(2,8)$, then $\pr(P,Q)=2/9<2/5,$ 	   for every Sylow $2$-subgroup $P$ and every Sylow $3$-subgroup $Q$ of $G$.
	
	Finally suppose $G=\PSL(2,p),$ with $(p-1)/2=3^a$  or $(p+1)/2=3^a$. 
	
			Let $\delta \in \{-1,1\}$ be such that  $(p+\delta)/2$ is a power of $3$.
		  Then a Sylow $3$-subgroup of $G$ is cyclic of order $(p+\delta)/2$ and it is self-centralizing (see e.g. \cite[II \S8]{huppert}),
	 so  $G$ does not contain elements of order $6$. Since $(4+9-1)/36=1/3 <2/5$, we conclude that the order of a Sylow $3$-subgroup is at most $3$. 
	  Therefore $p=7$ or $p=5$. 
	 Since $p \neq 5$, we are left with the  case $p=7$. Note that in $G=\PSL(2,7)$ we have $\pr(P,Q)=5/12>2/5$
		for every Sylow $2$-subgroup $P$ and every Sylow $3$-subgroup $Q$, whence $\prp^*_G (2,q) >2/5$
		for  any prime $q\ne 7.$

This concludes the proof of statement $(**)$; i.e.  if $p\neq 2,5$ and $G$ is a finite nonabelian simple group such that $\prp^*_G (2,q) >2/5$ for  any prime $q\in\pi(G)\setminus\{p\}$, then $p=7$ and  $G$ is  isomorphic to $\PSL(2,7)$.

We now complete the proof of the proposition.

\noindent (1) Let $G$ be a counterexample of minimal order. As the hypothesis is inherited by quotients and normal subgroups, $G$ is a nonabelian simple group. It follows from   statement   $(**)$ that $p=7$, contrary to the assumption that $p\neq7$.
	
\noindent (2) Let $G$ be a counterexample of minimal order. Again $G$ is a nonabelian simple group. As $5/12>2/5$, we deduce from  statement  $(**)$ that $G=\PSL(2,7)$.  But in this case  $\pr
(P,Q)=5/12$	for every Sylow $2$-subgroup $P$ and every Sylow $3$-subgroup $Q.$ This final contradiction proves the result.
\end{proof}
 
Now Theorem \ref{main-2-q} follows easily from Propositions \ref{2-q} and \ref{7}.
\medskip

{\sc Theorem \ref{main-2-q}}. {\it Let $G$ be a finite group. 
 \begin{enumerate}
 \item If $\prp^*_{G}(2,5')>1/2$, then $G$ is soluble.
 \item If $\prp^*_G(2, 7')>5/12$, then $G$ is soluble.
 \item    If $\prp^*_G (2, p') >2/5$ for a prime $p\neq2,5,7$, then $G$ is soluble.
\end{enumerate}}

\begin{proof}
\begin{enumerate}
 \item follows from Proposition \ref{2-q} (1).
\item follows from Proposition \ref{7} (2).
\item  follows from Proposition \ref{7} (1).
\end{enumerate}
\end{proof} 

 We remark that the bounds in Theorem \ref{main-2-q} are best possible, as the examples of the groups $\A_5$ and $\PSL(2,7)$ described above show. Indeed,  $\prp^*_{\A_5}(2, 5')=1/2$, $\prp^*_{\PSL(2,7)}(2, 7')=5/12$ and $\prp^*_{\A_5}(2, p')=2/5$ for $p \neq 5, 7$. 
 	
Now we deal with the problem of detecting $p$-solubility of $G$ for a fixed prime $p \neq 2$. 

\begin{proposition} \label{3sol} Let $G$ be a finite group such that $\prp^*_G(3,3')(G)>7/15,$ then the Sylow $3$-subgroups of $G$ are contained in the soluble radical $R(G)$. In particular, $G$ is $3$-soluble.
\end{proposition}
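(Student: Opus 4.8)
The plan is to argue by contradiction, taking $G$ to be a counterexample of minimal order: thus $\prp^*_G(3,3')>7/15$ while the Sylow $3$-subgroup $P$ is not contained in $R(G)$. First I would reduce to the case $R(G)=1$. If $R(G)\neq 1$, then $\bar G=G/R(G)$ still satisfies the hypothesis by Lemma \ref{lem:trivial*}, has $R(\bar G)=1$, and its Sylow $3$-subgroup $PR(G)/R(G)$ is nontrivial (because $P\not\le R(G)$) and hence not contained in $R(\bar G)=1$; since $|\bar G|<|G|$ this contradicts minimality. So $R(G)=1$, and in particular $Z(G)=1$ and $3$ divides $|G|$.

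Next I would extract the arithmetic consequences of the hypothesis, fixing one Sylow $3$-subgroup $P$ (legitimate, since all Sylow $3$-subgroups are conjugate). For each prime $q\geq 5$ dividing $|G|$ choose a Sylow $q$-subgroup $Q$ with $\pr(P,Q)>7/15$; since $(q+2)/(3q)\le 7/15$ for $q\ge 5$, Lemma \ref{lem:easy}(2) and Remark \ref{xy} force $[P,Q]=1$. For $q=2$ choose a Sylow $2$-subgroup $Q$ with $\pr(P,Q)>7/15$; since $(8+2)/(8\cdot 3)=5/12<7/15$, Lemma \ref{lem:easy} and Remark \ref{xy} (with the $2$-subgroup in the role of $P$) give $|Q:C_Q(P)|\le 4$. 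Now pick $z\in Z(P)\setminus\{1\}$. Then $C_G(z)$ contains $P$ together with each centralized Sylow $q$-subgroup ($q\ge 5$), so $|z^G|$ is a power of $2$; moreover $C_G(z)\supseteq C_Q(P)$, so the $2$-part of $|z^G|$ is at most $4$. Hence $1<|z^G|\le 4$, the left inequality because $z\notin Z(G)=1$.

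The decisive step is the analysis of the socle. Since $R(G)=1$, $N=\soc(G)=S_1\times\cdots\times S_k$ is a direct product of nonabelian simple groups, and $C_G(N)=1$ (a nontrivial $C_G(N)$ would contain a minimal normal subgroup lying in $N$ and centralized by $N$, hence abelian, which is impossible). Because $|N:C_N(z)|$ divides $|z^G|\le 4$, the permutation action of $z$ on the factors $S_i$ must be trivial: a nontrivial $\langle z\rangle$-orbit of factors has size at least $3$ (being a power of $3$), and its diagonal centralizer forces a contribution of at least $|S_i|^2\ge 60^2$ to the index $|N:C_N(z)|$, a contradiction. Hence $z$ normalizes each $S_i$ and $C_N(z)=\prod_i C_{S_i}(z)$. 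As $z\notin C_G(N)=1$, the action of $z$ on some factor $S=S_i$ is nontrivial, so $C_S(z)$ is a proper subgroup with $|S:C_S(z)|\le 4$. Letting $S$ act on the (at most $4$) cosets of $C_S(z)$ gives an embedding of the simple group $S$ into $\Sym_4$, whence $|S|\le 24$, contradicting $|S|\ge 60$. This contradiction completes the proof, and the $3$-solubility of $G$ follows because $R(G)$ is soluble and $G/R(G)$ has order prime to $3$.

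I expect the main obstacle to be securing the sharp bound $|z^G|\le 4$ rather than merely ``$|z^G|$ is a power of $2$''. A bare $2$-power bound would only yield nonsimplicity through Burnside's Lemma \ref{burnsideLemma}, leaving one to confront simple groups of order prime to $3$, whose identification requires the classification; the quantitative bound $|z^G|\le 4$ is exactly what lets the final coset-action argument cap $|S|$ by $24$ and close the proof without invoking the classification. Some care is also needed at the socle step to exclude permuted components and to confirm that $z$ acts nontrivially on at least one factor.
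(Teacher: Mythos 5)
Your argument is correct, and it diverges from the paper's proof at the decisive step. The paper reduces to a unique nonabelian minimal normal subgroup, uses only the pairs $(3,q)$ with $q\ge 5$ to conclude that $|G:C_G(z)|$ is a $2$-power for $1\neq z\in Z(P)$, and then invokes Kazarin's strengthening of Burnside's $p^\alpha$-lemma (the main result of \cite{kaz}) to get that $\langle z^G\rangle$ is soluble, which immediately contradicts the nonabelianness of the minimal normal subgroup. You instead also exploit the pair $(3,2)$ — which the paper's proof never touches — to sharpen ``$|z^G|$ is a $2$-power'' to $|z^G|\le 4$, and then replace Kazarin's theorem by an elementary socle analysis: excluding permuted components via the $|S|^{m-1}$ index count and embedding a noncentralized simple component into $\Sym_4$. (All the quantitative steps check out: $(q+2)/(3q)\le 7/15$ for $q\ge 5$ with equality at $q=5$; $(8+3-1)/(8\cdot 3)=5/12<7/15$ forces $|Q:C_Q(P)|\le 4$; and $|N:C_N(z)|$ divides $|G:C_G(z)|$.) What your route buys is self-containedness — no appeal to \cite{kaz} — at the cost of a longer endgame and of needing the extra input from the prime $2$; the paper's route is shorter because Kazarin's theorem absorbs the case of an arbitrary $2$-power class size. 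Your closing remark correctly identifies this trade-off: without the sharp bound $|z^G|\le 4$ one is forced into exactly the nonsimplicity-only conclusion that the paper resolves by citing \cite{kaz}.
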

\begin{proof} 
 Let $G$ be a counterexample of minimal order  and let $P$ be a Sylow $3$-subgroup of $G$. 

  Assume $G$ has two minimal normal subgroups $N_1$ and $N_2$. By minimality $PN_i/N_i$ is contained in the soluble radical $R_i/N_i$ of $G/N_i$ for $i=1,2$. Let $R=R_1 \cap R_2$. Note that $P \le R$ and $R$ is normal in $G$. Since $R$ is isomorphic to a subgroup of $RN_1/N_1 \times RN_2/N_2$ we deduce that $R$ is soluble. Hence $P \le R \le R(G)$, a contradiction. So $G$ has a unique minimal  normal subgroup $N$, and by minimality $N$ is not abelian. 

	By hypotheses, if 
	 $q\neq 3$ is a prime, then there exists   a Sylow $q$-subgroup $Q$ of $G$ such that 
	$\pr(P,Q) > 7/15$. 
	Assume that $q\ge 5$. If $[P,Q]\neq 1,$ 
	 then  from Remark \ref{xy} and Lemma \ref{lem:easy}   we get 
		$$\pr(P,Q)\leq \frac{3+5-1}{3 \cdot 5}= \frac{7}{15},$$
 a contradiction. Therefore $[P,Q]=1$ for every $q \ge 5$. 

This implies that $|G:PC_G(P)|$ is a $2$-power.  
In particular, $|G:C_G(z)|$ is a 2-power for every $z\in Z(P).$ It follows from the main result in \cite{kaz} that $\langle z^G\rangle$ is a soluble normal subgroup of $G,$ contradicting the fact that the unique minimal normal subgroup $N$ of $G$ is nonabelian.
\end{proof}

\begin{proposition}\label{psol} Let $G$ be a finite group and $p\geq 5$ a prime. If $\prp^*_G(p,p')>2/5,$ then the Sylow $p$-subgroups of $G$ are contained in the  soluble radical $R(G)$. In particular, $G$ is $p$-soluble.
\end{proposition}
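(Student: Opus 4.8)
The plan is to mimic the proof of Proposition \ref{3sol}: fix a Sylow $p$-subgroup $P$ and force the index $|G:C_G(z)|$ of the centralizer of a suitable element $z \in Z(P)$ to be a prime power, so that the main result of \cite{kaz} applies. First I would take a counterexample $G$ of minimal order and argue, exactly as in Proposition \ref{3sol}, that $G$ has a unique minimal normal subgroup $N$, which is nonabelian; in particular $C_G(N)=1$ (if $C_G(N)\ne 1$ it would contain $N$, contradicting $N\cap C_G(N)=Z(N)=1$), and hence $Z(G)=1$.

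Next I would separate three regimes for a prime $q\in\pi(G)\setminus\{p\}$ and a Sylow $q$-subgroup $Q$ with $\pr(P,Q)>2/5$, using Lemma \ref{lem:easy} and Remark \ref{xy}. If $q\ge 5$, then $[P,Q]=1$, since $[P,Q]\ne 1$ would give $\pr(P,Q)\le (p+q-1)/(pq)<2/5$; hence $C_G(P)$ contains a full Sylow $q$-subgroup. If $q=3$, then applying Lemma \ref{lem:easy} directly gives $|P:C_P(Q)|\le p$ (so $\frat(P)\le C_P(Q)$), while applying it with the roles of $P$ and $Q$ interchanged gives $|Q:C_Q(P)|\le 3$. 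If $q=2$, the same interchanged computation gives $|Q:C_Q(P)|\le 2$ for every $p\ge 5$ (at $p=5$ the threshold is attained, so strict inequality is essential). Since $C_Q(P)\le C_G(P)\le C_G(z)$ for all $z\in Z(P)$, these bounds show that for every $z\in Z(P)$ the index $|G:C_G(z)|$ is a $\{2,3\}$-number whose $2$-part is at most $2$ and whose $3$-part is at most $3$; that is, $|G:C_G(z)|\in\{1,2,3,6\}$.

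The core of the argument is to produce $z\in Z(P)\setminus\{1\}$ of prime-power index. Killing the prime $3$ amounts to finding $z\in C_P(Q)=P\cap C_G(Q)$ for the chosen Sylow $3$-subgroup $Q$. If $\frat(P)\ne 1$ I would take $1\ne z\in Z(P)\cap\frat(P)\le C_P(Q)$; if $P$ is elementary abelian but $C_P(Q)\ne 1$ I would take $1\ne z\in C_P(Q)$. In either case $3\nmid|G:C_G(z)|$, so $|G:C_G(z)|\in\{1,2\}$; as $Z(G)=1$ forces the index to be nontrivial, it equals $2$, and \cite{kaz} shows $\langle z^G\rangle$ is soluble. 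Being a nontrivial normal subgroup, $\langle z^G\rangle$ contains $N$, so $N$ is soluble, a contradiction. The only case left uncovered is $P$ cyclic of order $p$ with $C_P(Q)=1$, i.e. $[P,Q]\ne 1$; the $q=3$ bound forces $[P,Q]=1$ as soon as $p\ge 11$, so this residual situation occurs only for $p\in\{5,7\}$. If there $[P,Q_2]=1$ for the relevant Sylow $2$-subgroup, I would instead take $z$ a generator of $P$, killing the prime $2$, and finish as above.

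The hard part, and the only step with no analogue in Proposition \ref{3sol}, is the doubly exceptional case in which $P$ is cyclic of order $p\in\{5,7\}$ and both $[P,Q_2]\ne 1$ and $[P,Q_3]\ne 1$. Here the index bounds force $|G:C_G(g)|=6$ for a generator $g$ of $P$. Since $C_G(N)=1$, the kernel of the conjugation action of $G$ on the six conjugates of $g$ is trivial (a nontrivial kernel would contain $N\le C_G(g)$, forcing $g\in C_G(N)=1$), so $G$ embeds in $\Sym_6$. If $p=7$ this is absurd, as $7\nmid|\Sym_6|$. If $p=5$, then $g$ is a $5$-cycle in $\Sym_6$, whose centralizer $C_{\Sym_6}(g)$ is cyclic of order $5$; hence $C_G(g)=\langle g\rangle$ and $|G|=30$, so $G$ is soluble, contradicting the nonsolubility of $N$. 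I expect this CFSG-free disposal of the conjugacy class of size $6$ to be the main obstacle, since everywhere else the prime $2$ (uncontrolled by the hypothesis for $p\ge 5$) and the prime $3$ can be handled uniformly by the centralizer bookkeeping above.
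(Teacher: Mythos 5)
Your proof is correct, and it shares the paper's central mechanism: after reducing to a group with a unique nonabelian minimal normal subgroup $N$ (so $C_G(N)=1$), both arguments use Lemma \ref{lem:easy} and Remark \ref{xy} to show that $|G:C_G(z)|$ divides $6$ for suitable $z\in Z(P)$, and both dispose of the critical case via the faithful action on the six cosets of $C_G(z)$. Where you diverge is in the organization of the endgame. The paper takes any $z\in Z(P)$ of order $p$, immediately embeds $G$ into $\Sym_6$ (trivial core), deduces $p=5=|P|$, splits on whether $z\in N$, and rules out $N\in\{\A_5,\A_6\}$ by checking that $|N:C_N(P)|$ does not divide $6$. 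You instead import the strategy of Proposition \ref{3sol}: you first try to choose $z$ (in $Z(P)\cap\frat(P)$ or in $C_P(Q_3)$) so that $|G:C_G(z)|$ becomes a prime power, invoke Kazarin's theorem \cite{kaz} to get a soluble normal subgroup containing $N$, and fall back on the $\Sym_6$ embedding only in the residual case $|P|=p\in\{5,7\}$ with $|G:C_G(g)|=6$, which you then kill by the sharper count $|C_{\Sym_6}(g)|=5$, hence $|G|=30$. Both endgames are valid; the paper's is shorter and avoids \cite{kaz} entirely in this proposition, while yours localizes the $\Sym_6$ argument to a single subcase and replaces the $\A_5/\A_6$ check by an order count. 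One phrasing to tighten: $[P,Q_2]\neq 1$ for the particular Sylow $2$-subgroup $Q_2$ supplied by the hypothesis does not by itself force $2$ to divide $|G:C_G(g)|$ (some other Sylow $2$-subgroup could centralize $P$); this is harmless, since the indices $1,2,3$ are already covered by your $Z(G)=1$ and Kazarin steps, so only the value $6$ genuinely needs the $\Sym_6$ argument, but the word ``force'' should be replaced by ``leave only the possibility''.
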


\begin{proof} 
  Let $G$ be a counterexample of minimal order. As in the proof of Proposition \ref{3sol}, it follows that $G$ has a unique minimal normal subgroup $N$, which is nonabelian.  In particular, $C_G(N)$ is trivial.

Let $P$ be a Sylow $p$-subgroup of $G$. 
 As $(4+5-1)/20=2/5$, it follows 
 from Remark \ref{xy} and Lemma \ref{lem:easy}   
 that $|G:PC_G(P)|$ divides $6$. In particular if $z$ is an element in $Z(P)$ of order $p$, then $|G:C_G(z)|$ divides $6$. 

Then $G$  permutes the cosets of $C_G(z)$ and the kernel of the action is the normal core $K$ of $C_G(z)$ in $G$. If $K$ contains $N$, then $1\neq z \in C_G(N)$, a contradiction. Therefore, $K$ is trivial and $G$ is isomorphic to a subgroup of $S_6$. In particular, $|G|$ divides $6!$ and so $p=5=|P|$ and $P=\langle z \rangle$. 

If $z \notin N$, then $N$ is  isomorphic to  a subgroup of  $S_6$ whose order is not divisible by $5$. It follows that $N$ is soluble,  a contradiction. 

Thus, $z \in N$ and, by minimality, $N=G$ is a simple nonabelian subgroup of $S_6$, that is, $N \in \{\A_5, \A_6\}$. In both cases, the index of $P=C_N(P)$ does not divide $6$, giving the final contradiction. 
\end{proof}

\section{Theorems \ref{main1} and \ref{main2}} 

This section furnishes proofs of Theorems \ref{main1} and \ref{main2}. We start with a general comment about simple groups.
\begin{lemma}\label{lem:simple}
 Let $S$ be a nonabelian finite simple group such that $\prp^* (S) \ge \epsilon >0$.
 Then $|S|$ is $\ep$-bounded. 
\end{lemma}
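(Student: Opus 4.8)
The plan is to reduce the bound on $|S|$ to a statement about the prime graph of $S$, the decisive inequality being Remark \ref{nopq}. Throughout, call a prime $p$ \emph{tame} if $|S|_p\le 2/\epsilon$ and \emph{wild} otherwise, and write $L$ for the set of wild primes. As preliminary reductions, since a nonabelian simple group is nonsoluble, the Feit--Thompson theorem gives that $|S|$ is even, and Burnside's $p^\alpha q^\beta$ theorem gives $|\pi(S)|\ge 3$. The key observation I would record first is the following \emph{forbidden configuration}: if two primes $p,q$ are non-adjacent in the prime graph of $S$ (that is, $S$ has no element of order $pq$), then by Remark \ref{nopq} \emph{every} pair of Sylow subgroups satisfies $\pr(P,Q)=(|S|_p+|S|_q-1)/(|S|_p|S|_q)$, so the hypothesis $\prp^*(S)\ge\epsilon$ forces $(|S|_p+|S|_q-1)/(|S|_p|S|_q)\ge\epsilon$; were both $|S|_p,|S|_q>2/\epsilon$ this quantity would be strictly below $1/|S|_p+1/|S|_q<\epsilon$, a contradiction. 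Hence any non-adjacent pair contains a tame prime.

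The immediate consequence is that \emph{the wild primes $L$ form a clique} in the prime graph, while every tame prime $p$ satisfies $p\le |S|_p\le 2/\epsilon$, so there are at most $\pi(2/\epsilon)$ tame primes, each contributing a factor at most $2/\epsilon$ to $|S|$. Therefore
\[
|S|\;=\;\Big(\textstyle\prod_{p\in L}|S|_p\Big)\Big(\textstyle\prod_{p\notin L}|S|_p\Big)\;\le\;\Big(\textstyle\prod_{p\in L}|S|_p\Big)\,(2/\epsilon)^{\pi(2/\epsilon)},
\]
so the whole problem collapses to showing that the number of wild primes and each wild Sylow order are $\epsilon$-bounded; equivalently that $\prod_{p\in L}|S|_p$ is $\epsilon$-bounded. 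Note that for any pair of wild primes (or more generally whenever $1/p+1/q<\epsilon$) Lemma \ref{lem:easy}(2) also forces the witnessing Sylow subgroups to commute, so the clique structure is accompanied by genuine commuting pairs, which I would exploit together with coprime action (Lemma \ref{coprime}) and Burnside's $p^\alpha$-Lemma \ref{burnsideLemma} in the style of Propositions \ref{cor:soluble} and \ref{3sol}.

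To bound $\prod_{p\in L}|S|_p$ I would argue by contradiction: assuming $|S|$ large, produce a \emph{non-adjacent} pair of primes both of whose Sylow subgroups have order $>2/\epsilon$, contradicting the forbidden configuration. The natural engine is Zsigmondy's theorem \cite{z}, already used in the paper: primitive prime divisors of numbers $r^n-1$ attached to $|S|$ are large relative to $n$ and tend to live in ``independent'' parts of $S$, producing primes with no common element order; and an elementary smoothness estimate shows that if all prime powers exactly dividing a large integer are at most $2/\epsilon$ then that integer is $\epsilon$-bounded, so a large wild Sylow necessarily drags in a second large non-adjacent Sylow. Combined with the clique property, this should pin down both $|L|$ and each $|S|_p$ for $p\in L$; as a safety net one can also invoke the (classification-free) Brauer--Fowler bound $|S|\le(|C_S(z)|^2)!$ for an involution $z$ to convert control of a single $2$-local datum into a bound on $|S|$.

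The decisive obstacle is exactly this last step: extracting, \emph{uniformly and without the classification of finite simple groups}, a non-adjacent pair of primes with large Sylow subgroups from a simple group of large order. The trouble is that a single pair of primes constrains only the \emph{smaller} of the two Sylow orders, so one cannot bound anything pair-by-pair; one must feed the clique structure of $L$ into Zsigmondy-type number theory to guarantee such a forbidden pair appears, and keeping this argument uniform across alternating, Lie-type, and sporadic groups—rather than case-splitting as the classification would permit—is where the real work lies.
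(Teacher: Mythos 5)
There is a genuine gap, and you have named it yourself: the entire argument funnels into the claim that a simple group of large order must contain a \emph{non-adjacent} pair of primes both of whose Sylow subgroups have order exceeding $2/\epsilon$, and this claim is never proved. Your preliminary reductions (wild primes form a clique in the prime graph; tame primes contribute an $\epsilon$-bounded factor) are correct, but they only convert the problem into a question about prime graphs of arbitrary finite simple groups, which is strictly harder than what you started with: the adjacency structure of prime graphs of simple groups is itself a deep, classification-dependent subject, and there is no uniform Zsigmondy-type argument that produces a forbidden pair across alternating, Lie-type and sporadic groups without case analysis. Moreover, as you observe, each single pair of primes only constrains the \emph{smaller} of the two Sylow orders, so no amount of pairwise bookkeeping closes the loop. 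The Brauer--Fowler ``safety net'' does not help either, since it bounds $|S|$ in terms of the \emph{order} of an involution centralizer, whereas nothing in your setup controls that order.

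The paper's proof avoids the prime graph entirely and is much shorter. It splits on the size of the largest prime $p\in\pi(S)$. If $p>(2-\epsilon)/\epsilon$, then for each other prime $p_j$ the inequality of Lemma \ref{lem:easy} applied to a witnessing pair gives $|Q_j:C_{Q_j}(P)|\le (p-1)/(\epsilon p-1)\le 2/\epsilon$; in particular $[P,Q_j]=1$ whenever $p_j>2/\epsilon$, so a nontrivial $z\in Z(P)$ satisfies $|S:C_S(z)|\le (2/\epsilon)^{2/\epsilon}$, and simplicity yields $|S|\le(|S:C_S(z)|)!$ via the faithful action on the cosets of $C_S(z)$. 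If instead $p\le(2-\epsilon)/\epsilon$, then $|\pi(S)|$ is $\epsilon$-bounded, and applying Lemma \ref{lem:H_0} to the Sylow subgroup $R$ of largest order produces a bounded-index subgroup $H=\cap_j H_j\le R$ all of whose elements have $\epsilon$-bounded conjugacy classes in each $Q_j$; either $H=1$ (so $|R|$, hence $|S|$, is bounded) or a nontrivial $z\in Z(H)$ again has $\epsilon$-bounded class, and the same simplicity argument finishes. The lesson is that the hypothesis should be cashed in for a single element with small conjugacy class, after which simplicity does all the work; routing the information through element orders and the prime graph discards exactly the quantitative control you need.
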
 
\begin{proof}  
Set $l= (2-\epsilon)/\epsilon$. Let $p$ be the largest prime divisor of $|S|$ and  $P$ a Sylow $p$-subgroup of $S$. 

 Assume first that $p > l$. 
 Let $|S| =p_1^{a_1} \cdots p_t^{a_t}$ where $p_i$ are distinct primes and $p_1=p$. 
  For each prime $p_j$ with $j \neq 1$ let $Q_j$ be a  Sylow $p_j$-subgroup  such that  $\pr (P,Q_j) \ge \epsilon.$
  
 Choose $j$ such that $[P, Q_j] \neq 1$ and let $p_j^{b_j} = |Q_j : C_{Q_j}(P)|$. By  Lemma \ref{lem:easy} we have that 
\[   \epsilon \le \pr (Q_j , P) \le \frac{p_j^{b_j} +p-1}{p_j^{b_j} p} \] 
  and this implies 
    \begin{equation}\label{p_jb} 
   p_j^{b_j}  \le \frac{p-1}{\ep p-1}= \frac 1{\ep} \left( 1 + \frac{1-\ep}{\ep p -1} \right). 
     \end{equation}
  Since $p > l= (2-\epsilon)/\epsilon$, we have $\ep p -1 > 1-\ep$ and therefore  by \eqref{p_jb} 
   \[ p_j^{b_j}  \le \frac 2{\ep}.\] 
 It follows that $[P, Q_j] = 1$ for every $p_j > 2/\ep$. 
 Therefore, for a nontrivial element $1 \neq z \in Z(P)$  the index of $C_S(z)$  in $S$ is bounded by 
   \[ \prod_{p_j < 2/\ep} p_j^{b_j} \le \prod_{p_j <2/\ep }  \frac 2{\ep} \le \left(\frac 2{\ep}\right)^{2/\ep}. \]
 As $S$ is a simple group, the normal core of $C_S(z)$ is trivial, thus $|S| \le (|S:C_S(z)|)!$ is $\ep$-bounded. 
 
 Let us now consider the case where $p \le l= (2-\epsilon)/\epsilon$. It is a well-known consequence of the classification of finite simple groups that, given a set of primes $\pi$, there are only finitely many finite nonabelian simple groups $T$ such that $\pi(T)=\pi$.
Thus, if $p \le l$, then $|S|$ is $\epsilon$-bounded. 

An alternative proof, which is independent of the classification, can be obtained as follows.

  Let again $|S| =p_1^{a_1} \cdots p_t^{a_t}$ where $p_i$ are distinct primes, and suppose that
  \[ p_t^{a_t} > p_j^{a_j},\]
  for every $j \neq t$. Since $t \le p \le l$ and $|S| \le (p_t^{a_t} )^l$, it is sufficient to bound the order $p_t^{a_t} $ of a Sylow $p_t$-subgroup $R$ to conclude that the order 
  of $S$ is $\ep$-bounded. 
  
    For each prime $p_j$ with $j \neq t$ let $Q_j$ be a  Sylow $p_j$-subgroup  such that  $\pr (R,Q_j) \ge \epsilon.$ 
    By Lemma \ref{lem:H_0}, for every $j \neq t$ there exists a subgroup $H_j$ of $R$ such that 
     \begin{enumerate}
\item $|R:H_j| \le 2/\epsilon-1$; 
\item $|Q_j: C_{Q_j}(x)| \le (2/\epsilon)^{6/\epsilon}$  for every $x \in H_j$.
\end{enumerate}
As $t \le l$, the intersection $H= \cap_{j \neq t} H_j$ has $\ep$-bounded index in $R$. If $H=1$, then $R$ has   $\ep$-bounded-order, as desired. 
 Otherwise $H\neq 1$ and we can find a non trivial element $1 \neq z \in Z(H)$. 
 Since $|Q_j: C_{Q_j}(z)| \le (2/\epsilon)^{6/\epsilon}$  for every $j \neq t$ we have that  
   \[ |S:C_S(z)| \le |R:H| \prod_{j \neq t} |Q_j: C_{Q_j}(z)| \le  |R:H| ( (2/\epsilon)^{6/\epsilon})^l. \]
  Therefore $|S:C_S(z)| $ is $\ep$-bounded. We conclude again that  $|S|$  is   $\ep$-bounded.
\end{proof}

\begin{lemma}\label{lem:nonab-power}
 Let $G=H^t$ be a direct product of $t$ copies of a non-nilpotent finite  group $H$  and assume that $\prp^* (G) \ge \epsilon >0$.
  Then $t$ is $\ep$-bounded. 
\end{lemma}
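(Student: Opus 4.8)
The plan is to reduce the problem to two facts: a multiplicativity formula for $\prp^*$ over direct powers, and a uniform gap $\prp^*(H)\le 2/3$ coming from the non-nilpotency of $H$. For the first, note that a Sylow $p$-subgroup of $G=H^t$ is precisely a product $P_1\times\cdots\times P_t$ with each $P_i$ a Sylow $p$-subgroup of the $i$th factor, and similarly for a Sylow $q$-subgroup; as the factors range independently, Lemma \ref{lem:trivial}(3) gives $\pr(P,Q)=\prod_i\pr(P_i,Q_i)$. Maximizing each factor separately shows that the largest value of $\pr(P,Q)$ over Sylow $p$- and $q$-subgroups of $G$ equals $M_{p,q}(H)^t$, where $M_{p,q}(H)$ is the corresponding maximum computed inside a single copy of $H$. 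Taking the minimum over all pairs of distinct primes $p,q\in\pi(H)=\pi(G)$ yields
\[ \prp^*(G)=\prp^*(H)^t. \]

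Next I would show that $\prp^*(H)\le 2/3$. Since $H$ is non-nilpotent we have $|\pi(H)|\ge 2$, and the key claim is that there is a pair of distinct primes $p,q$ such that no Sylow $p$-subgroup of $H$ commutes elementwise with any Sylow $q$-subgroup. Granting this, every such pair satisfies $[P,Q]\ne 1$, so Lemma \ref{lem:easy}(2) together with Remark \ref{xy} gives $\pr(P,Q)\le (p+q-1)/(pq)\le 2/3$ for all of them; hence $M_{p,q}(H)\le 2/3$ and therefore $\prp^*(H)\le 2/3$. Combined with the displayed identity and the hypothesis $\prp^*(G)\ge\epsilon$, this gives $(2/3)^t\ge\prp^*(H)^t=\prp^*(G)\ge\epsilon$, so $t\le \log(1/\epsilon)/\log(3/2)$ is $\epsilon$-bounded.

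The heart of the matter, and the step I expect to be the main obstacle, is the claim that a non-nilpotent $H$ admits such a ``bad'' pair of primes; equivalently, that $\prp^*(H)=1$ forces $H$ to be nilpotent. I would prove this by taking a counterexample $H$ of minimal order. By Lemma \ref{lem:trivial*}, every proper quotient and every proper normal subgroup of $H$ again satisfies $\prp^*=1$, hence is nilpotent by minimality; in particular $H$ has a unique minimal normal subgroup $N$ (otherwise $H$ embeds into a product of nilpotent proper quotients and is nilpotent). If $N$ is nonabelian then $H$ is characteristically simple with nonabelian factors, and minimality forces $H$ to be simple, whence Proposition \ref{cor:soluble} gives $\prp^*(H)\le 2/5<1$, a contradiction. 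If $N$ is an elementary abelian $p$-group, then $N$ lies in every Sylow $p$-subgroup, and for each prime $q\ne p$ a commuting pair of Sylow subgroups (which exists since $\prp^*(H)=1$) forces a full Sylow $q$-subgroup into $C_H(N)$. Analyzing $C_H(N)$, either it is all of $H$, so $N$ is central and $H/Z(H)$, being a quotient of the nilpotent group $H/N$, is nilpotent; or it is a proper, hence nilpotent, normal subgroup furnishing a normal nilpotent Hall $p'$-subgroup whose Sylow $q$-subgroups are normal in $H$, after which a commuting pair together with the conjugacy of Sylow $p$-subgroups shows that a Sylow $p$-subgroup centralizes every such $Q$. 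In every case all Sylow subgroups of $H$ turn out to be normal, so $H$ is nilpotent, contradicting the choice of $H$. This contradiction establishes the claim and completes the proof.
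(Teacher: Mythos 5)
Your proof is correct, and its skeleton --- multiplicativity of $\pr$ over the direct factors via Lemma \ref{lem:trivial} (3), combined with a uniform gap of $2/3$ in each copy of $H$ coming from non-nilpotency --- is exactly the paper's. The one substantive difference is where the gap comes from. The claim you single out as the heart of the matter (a non-nilpotent $H$ admits primes $p\ne q$ such that no Sylow $p$-subgroup centralizes any Sylow $q$-subgroup) is precisely the contrapositive, in the extreme case $\prp^*(H)=1$, of Proposition \ref{cor:nilpotent}, which is already proved in Section 3; the paper simply cites it, and in the sharper form that non-nilpotency forces $\prp^*(H)\le 2/3$ outright, i.e.\ a pair $p,q$ with $\pr(P,Q)\le 2/3$ for \emph{all} Sylow $p$- and $q$-subgroups, with no need to first produce a non-commuting pair and then invoke Lemma \ref{lem:easy} (2). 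Your minimal-counterexample derivation of that claim is sound --- in the nonabelian-socle case one should make explicit that a \emph{proper} minimal normal subgroup would be nilpotent by minimality, hence abelian, so $N=H$ and uniqueness of the minimal normal subgroup forces simplicity --- but it re-proves a weaker form of a result you could have quoted. A second, harmless difference: you establish the exact identity $\prp^*(H^t)=\prp^*(H)^t$, whereas the paper only needs the single inequality $\ep\le\pr(\hat P,\hat Q)=\prod_i\pr(P_i,Q_i)\le(2/3)^t$ for the particular pair of Sylow subgroups supplied by the hypothesis. Both routes yield the same bound $t\le\log(1/\ep)/\log(3/2)$.
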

\begin{proof}  
Write $G=H_1\times\dots\times H_t$, where $H_i$ is isomorphic to $H$. 

 By Proposition \ref{cor:nilpotent} 
 there exist two primes $p \neq q$  in $\pi(H)$ such that, for every Sylow $p$-subgroup $P$  and every 
 Sylow $q$-subgroup  $Q$ of $H$, we have $\pr (P,Q) \le 2/3.$ 

Choose a Sylow $p$-subgroup $\hat P$  and a Sylow  $q$-subgroup 
$\hat Q$ of $G$. Write 
$\hat P=P_1\times\dots\times P_t$  and $\hat Q=Q_1\times\dots\times Q_t$, where $P_i, Q_i$ are Sylow subgroups of  $H_i$. 
Thus $\pr (P_i,Q_i) \le 2/3.$ 

In view of Lemma \ref{lem:trivial} (3)  we have 
\[ \ep \le \pr (\hat P, \hat Q) = \prod_{1\le i \le t} \pr (P_i,Q_i) \le (2/3)^t. \] 
So $t$ is $\ep$-bounded and the lemma follows. 
\end{proof}

\begin{corollary}\label{cor:simple-product}
 Let $G$ be a direct product of  nonabelian finite simple groups such that $\prp^* (G) \ge \epsilon >0$.
  Then $|G|$ is $\ep$-bounded. 
\end{corollary}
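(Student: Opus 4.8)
The plan is to reduce the statement to the two preceding lemmas by sorting the simple direct factors of $G$ according to their isomorphism type. Write $G = S_1 \times \cdots \times S_k$ with each $S_i$ a nonabelian finite simple group, and collect the factors into isomorphism classes, so that $G \cong T_1^{t_1} \times \cdots \times T_m^{t_m}$, where $T_1, \ldots, T_m$ are pairwise non-isomorphic nonabelian simple groups and $t_j$ counts the factors isomorphic to $T_j$. Each $T_j$ and each $T_j^{t_j}$ is a direct factor of $G$, hence a quotient of $G$; so Lemma \ref{lem:trivial*} yields $\prp^*(T_j) \ge \ep$ and $\prp^*(T_j^{t_j}) \ge \ep$ for every $j$. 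This descent of the hypothesis is the only routine point to check, and it follows from the remark that a direct factor is at once a subgroup and a quotient.

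With the hypothesis available on each piece, I would first bound the individual orders and the multiplicities. Applying Lemma \ref{lem:simple} to each $T_j$ shows that $|T_j|$ is $\ep$-bounded, say $|T_j| \le C(\ep)$. Since a nonabelian simple group is never nilpotent, Lemma \ref{lem:nonab-power} applied to $T_j^{t_j}$ shows that every exponent $t_j$ is $\ep$-bounded.

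The remaining, and main, point is to bound the number $m$ of distinct isomorphism types, since a priori there is no control over how many different simple groups occur. Here I would leverage the order bound just obtained: because the $T_j$ are pairwise non-isomorphic and each satisfies $|T_j| \le C(\ep)$, the number $m$ cannot exceed the number of isomorphism classes of simple groups of order at most $C(\ep)$. As there are only finitely many groups of order at most any fixed bound, this count is finite and $\ep$-bounded (no appeal to the classification is needed for this elementary finiteness). Combining the three bounds gives
\[ |G| = \prod_{j=1}^m |T_j|^{t_j} \le C(\ep)^{\,m\,\max_j t_j}, \]
which is $\ep$-bounded, as required. I expect no genuine obstacle beyond organizing these three estimates in the right order, the crucial idea being to group identical factors so that Lemma \ref{lem:nonab-power} can be invoked with a single pair of primes shared across the copies.
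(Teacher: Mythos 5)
Your proof is correct and follows essentially the same route as the paper's: both group the factors by isomorphism type, descend the hypothesis to each $T_j^{t_j}$ and $T_j$ as quotients, invoke Lemma \ref{lem:nonab-power} for the multiplicities and Lemma \ref{lem:simple} for the orders, and then bound the number of isomorphism types by the elementary finiteness of the number of groups of a given order. No substantive difference.
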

\begin{proof}   
Write $G=S_1^{t_i}\times\dots\times S_r^{t_r}$, where each $S_i$ is a nonabelian finite simple group. 
 Each $S_i^{t_i}$ is a homomorphic image of $G$, whence $\prp^* (S_i^{t_i})\geq\epsilon$. 
 It follows from Lemma \ref{lem:nonab-power} that each $t_i$ is $\ep$-bounded. 
 Moreover,  as $S_i$ is a homomorphic image of $S_i^{t_i}$, we observe that $\prp^* (S_i)\geq\epsilon$.  Therefore, by Lemma \ref{lem:simple} we have that $S_i$ has $\ep$-bounded order. 
 In particular, since there are at most $(n!)^{\log n} $ finite groups of order $n$, 
 there are only  $\ep$-boundedly many possible choices for $S_i$. 
  So, $r$ is $\ep$-bounded. 
  We conclude that  $G= S_1^{t_i}\times\dots\times S_r^{t_r}$ has $\ep$-bounded order. 
\end{proof}

The next useful result is an immediate consequence of Lemma 2.3 in \cite{ijac16}. 
 
\begin{lemma} \cite[Lemma 2.6]{DMS-coprimecomm}  \label{zero} Assume that $G=QH$ is a finite group with a normal  nilpotent subgroup $Q$ and a subgroup $H$ 
such that   $(|Q|,|H|)=1$ and  $|Q:C_Q(x)|\leq m$ for all $x\in H$. Then the order of $[Q,H]$ is $m$-bounded.
\end{lemma}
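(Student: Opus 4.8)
The plan is to treat $H$ as a coprime group of operators acting on $Q$ by conjugation (the assumption $(|Q|,|H|)=1$ is exactly what makes the action coprime) and to whittle the problem down to elementary data in stages. First I would replace $Q$ by $T=[Q,H]$. By Lemma \ref{coprime}(2) we have $[T,H]=[Q,H,H]=[Q,H]=T$, and since $C_T(x)=T\cap C_Q(x)$ we get $|T:C_T(x)|\le |Q:C_Q(x)|\le m$ for every $x\in H$. Thus it suffices to prove that a nilpotent group $Q$ satisfying $Q=[Q,H]$ and $|Q:C_Q(x)|\le m$ for all $x\in H$ has $m$-bounded order. As $Q$ is nilpotent it is the direct product of its Sylow subgroups $Q_p$, each of which is $H$-invariant, and $Q=[Q,H]$ forces $Q_p=[Q_p,H]$ for every prime $p$. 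If $Q_p\neq 1$ then some $x\in H$ has $[Q_p,x]\neq 1$, so $p$ divides $|Q_p:C_{Q_p}(x)|\le m$ and hence $p\le m$; moreover the coprime telescoping identity $|Q:C_Q(x)|=\prod_p |Q_p:C_{Q_p}(x)|$ shows that only boundedly many of the $Q_p$ can be nontrivial. It is therefore enough to bound $|Q_p|$ for a single prime $p\le m$, i.e. to treat the case where $Q$ is a $p$-group with $Q=[Q,H]$.

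Next I would pass to the Frattini quotient $V=Q/\frat(Q)$, an $\mathbb F_pH$-module on which the action is coprime and hence completely reducible. Here $[V,H]=V$ and, by coprimeness, $C_V(H)=0$; moreover $|[V,x]|=|V:C_V(x)|\le |Q:C_Q(x)|\le m$, so $\dim_{\mathbb F_p}[V,x]\le \log_2 m$ for every $x\in H$. The key linear fact is that a completely reducible module $V$ with $C_V(H)=0$ in which $\dim[V,x]\le c$ for every $x$ has $\dim V$ bounded in terms of $c$. This can be seen by an elementary averaging argument: summing the ranks $\dim[V,x]=\operatorname{rank}(x-1)$ over $x\in H$ and using $C_V(H)=0$ shows that the average of $\dim[V,x]$ is at least a fixed positive fraction of $\dim V$, while by hypothesis each term is at most $c$. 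Consequently $Q/\frat(Q)$ has $m$-bounded order, i.e. $Q$ is generated by an $m$-bounded number $d$ of elements.

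It remains to bound the nilpotency class of $Q$, since a $d$-generated $p$-group of bounded class with $p\le m$ has $m$-bounded order. I expect this to be the main obstacle: bounding the number of generators does not by itself bound $|Q|$, and $H$ may centralize entire lower layers $\gamma_i(Q)/\gamma_{i+1}(Q)$ even though it has no fixed points on the top layer. The way to handle it is to pass to the associated graded Lie ring $L=\bigoplus_i \gamma_i(Q)/\gamma_{i+1}(Q)$, which is generated by $L_1=Q/\frat(Q)$ and carries a coprime $H$-action; the telescoping identity again gives $\prod_i |L_i:C_{L_i}(x)|=|Q:C_Q(x)|\le m$, so every $x\in H$ acts nontrivially on at most $\log_2 m$ of the layers $L_i$. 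Analysing the eigenvalues of $x$ on $L_1$ and on the length-$i$ bracket monomials spanning $L_i$, together with $C_{L_1}(H)=0$, forces the high-degree layers to vanish and thereby bounds the class. This final step is exactly the content of Lemma 2.3 of \cite{ijac16}, of which the present lemma is the stated immediate consequence; combining it with the generator bound above yields that $[Q,H]$ has $m$-bounded order.
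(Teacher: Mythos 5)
First, a point of comparison: the paper does not actually prove this lemma. It is quoted verbatim from Lemma 2.6 of \cite{DMS-coprimecomm}, and the only internal commentary is the sentence preceding it, saying that it is an immediate consequence of Lemma 2.3 of \cite{ijac16}. So your proposal cannot be matched against an in-paper argument; it can only be judged on whether it is a proof. Your reductions are correct and are the standard opening moves: replacing $Q$ by $T=[Q,H]$ via $[Q,H,H]=[Q,H]$; splitting the nilpotent group into $H$-invariant Sylow subgroups and noting that every prime involved is at most $m$ and that there are at most $m$ of them; and bounding $\dim_{\mathbb{F}_p} Q/\frat(Q)$ using complete reducibility, $C_V(H)=0$ and averaging $\dim[V,x]$ over $H$. (One caveat on the averaging: $\frac{1}{|H|}\sum_{x}\mathrm{tr}(x)$ computed over $\mathbb{F}_p$ only gives $\dim C_V(H)$ modulo $p$; you need the Brauer character of a coprime lift to get the honest inequality. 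This is routine but should be said.)

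The genuine gap is the final step, which is where the entire content of the lemma lives. A bound on $d(Q)$ does not bound $|Q|$, and your proposed route --- bounding the nilpotency class by ``analysing the eigenvalues'' of each $x$ on the bracket monomials of the associated graded Lie ring --- is not an argument as written: $H$ need not be cyclic or even abelian, so the eigenvalue data of different elements of $H$ do not combine in any controlled way; the boundedly many layers on which a fixed $x$ acts nontrivially can sit anywhere along the lower central series; and nothing you write excludes a $2$-generated $p$-group of enormous class on almost all of whose layers every individual element of $H$ acts trivially. You then declare this step to be ``exactly the content'' of Lemma 2.3 of \cite{ijac16} --- at which point you are doing precisely what the paper does, namely citing the external result, having reproved only its easy preprocessing. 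If a self-contained proof is wanted, the missing ingredient is of Neumann--Wiegold type rather than Lie-theoretic: for a fixed $x$ the set $K=\{[q,x] : q\in Q\}$ has exactly $|Q:C_Q(x)|\le m$ elements and generates $[Q,x]$, and the identity $[q,x]^g=[qg,x][g,x]^{-1}$ shows every element of $K$ has at most $m^2$ conjugates in $Q$; this is what actually lets one bound $|[Q,x]|$ and then assemble $[Q,H]$. The Frattini-quotient dimension count alone will not get you there.
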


We will need a technical lemma, which provides a key result for the proofs of Theorem \ref{main1} and  Theorem \ref{main2}. 
\begin{lemma}\label{lem:keysol}
Let $G$ be a finite soluble group such that $\prp^* (G) \ge \epsilon >0$. 
 If  $p> (2/\epsilon)^{6/\epsilon}$, then every Sylow $p$-subgroup of $G$ is contained in $F(G)$.
\end{lemma}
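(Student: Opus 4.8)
The plan is to argue by induction on $|G|$. If $p \nmid |G|$ the Sylow $p$-subgroup is trivial and the claim is immediate, so I assume $p$ divides $|G|$ and fix a Sylow $p$-subgroup $P$; write $K = O_{p'}(G)$. The single place where the commuting-probability hypothesis is used is the following claim: \emph{$P$ centralizes $K$}. To prove it, note that $K = \prod_{r \neq p} O_r(G)$, so it suffices to show $[P, O_r(G)] = 1$ for each prime $r \neq p$. Since Sylow $p$-subgroups are conjugate, for our fixed $P$ and each such $r$ there is a Sylow $r$-subgroup $Q_r$ with $\pr(P, Q_r) \ge \epsilon$; because $O_r(G)$ lies in every Sylow $r$-subgroup, Lemma \ref{lem:trivial}(2) gives $\pr(P, O_r(G)) \ge \pr(P, Q_r) \ge \epsilon$. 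As $P$ normalizes the normal subgroup $O_r(G)$ and $p > (2/\epsilon)^{6/\epsilon}$, Lemma \ref{cor:easy}(1) yields $[P, O_r(G)] = 1$, whence $[P, K] = 1$.

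Next I would split into two cases according to whether $K$ is trivial. Suppose first that $K \neq 1$. Passing to $\bar G = G/K$, we have $\prp^*(\bar G) \ge \epsilon$ and $|\bar G| < |G|$, so by induction the Sylow $p$-subgroup $\bar P = PK/K$ of $\bar G$ lies in $F(\bar G)$; being a Sylow $p$-subgroup of the nilpotent group $F(\bar G)$, it equals $O_p(\bar G)$ and is therefore normal in $\bar G$. Hence $PK \trianglelefteq G$. Because $[P,K] = 1$ and $P \cap K = 1$, we have $PK = P \times K$, so $P$ is precisely the set of $p$-elements of $PK$ and is thus characteristic in $PK$; consequently $P \trianglelefteq G$ and $P = O_p(G) \le F(G)$.

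In the remaining case $K = O_{p'}(G) = 1$, we have $F(G) = O_p(G) =: V$, and since $G \neq 1$ with $p \mid |G|$ this forces $V \neq 1$. Applying the induction hypothesis to $\bar G = G/V$ (again $\prp^*(\bar G) \ge \epsilon$ and $|\bar G| < |G|$) shows that $\bar P = P/V \le F(G/V)$, so $P/V = O_p(G/V)$ is normal in $G/V$; lifting through the quotient gives $P \trianglelefteq G$, and as $P$ is a $p$-group this forces $P \le O_p(G) = V \le P$, i.e.\ $P = V = F(G)$. In either case $P \le F(G)$, which closes the induction, since all Sylow $p$-subgroups are conjugate and $F(G)$ is normal.

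The hard part will be the centralizing claim $[P, O_{p'}(G)] = 1$: everything afterwards is formal induction together with standard facts about the Fitting subgroup, but this step is exactly where the hypotheses $\prp^*(G)\ge\epsilon$ and $p > (2/\epsilon)^{6/\epsilon}$ must be converted, via Lemma \ref{cor:easy}(1), into genuine structural information. The delicate point is that one only controls $\pr(P,Q_r)$ for a single Sylow $r$-subgroup $Q_r$, so the passage from $Q_r$ to the normal subgroup $O_r(G)$ through Lemma \ref{lem:trivial}(2) — using that $O_r(G)$ lies in \emph{every} Sylow $r$-subgroup — is what makes the coprime-action lemma applicable. I expect no other obstacle, since the recursion either terminates at a $p$-group (where $F(G)=G$) or eventually reaches the branch $K\neq 1$ where this centralizing claim does the work.
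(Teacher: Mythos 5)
Your overall strategy---induction on $|G|$ with a case split on a normal $p'$-subgroup---is genuinely different from the paper's direct argument, but as written it contains a false step. You set $K=O_{p'}(G)$ and assert $K=\prod_{r\neq p}O_r(G)$. This identity fails in general: $O_{p'}(G)$ is the largest normal $p'$-subgroup and need not be nilpotent, whereas $\prod_{r\neq p}O_r(G)$ is exactly the $p'$-part of $F(G)$. For instance, in $G=\Sym_4\times C_5$ with $p=5$ one has $O_{5'}(G)=\Sym_4$, while $\prod_{r\neq 5}O_r(G)=O_2(G)$ has order $4$. Consequently, proving $[P,O_r(G)]=1$ for every $r\neq p$---which you do correctly, and exactly as the paper does, via Lemma \ref{lem:trivial}(2) and Lemma \ref{cor:easy}(1)---only shows that $P$ centralizes the $p'$-part of $F(G)$, not all of $O_{p'}(G)$. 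So the assertion $[P,K]=1$, on which your case $K\neq 1$ rests (to get $PK=P\times K$ and hence $P$ characteristic in the normal subgroup $PK$), is not established.

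The gap is repairable. One fix is to take $K_0=\prod_{r\neq p}O_r(G)$ in place of $O_{p'}(G)$ throughout: your argument for $[P,K_0]=1$ is then valid; in the case $K_0\neq 1$ you pass to $G/K_0$ exactly as before (and $K_0\neq 1$ whenever $O_{p'}(G)\neq 1$, since $F(O_{p'}(G))=O_{p'}(G)\cap F(G)\le K_0$); in the case $K_0=1$ one has $F(G)=O_p(G)$ and your second branch applies verbatim. Another fix is to keep $K=O_{p'}(G)$ and add a coprime-action step upgrading $[P,F(K)]=1$ to $[P,K]=1$, using $C_K(F(K))\le F(K)$ for the soluble group $K$. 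The paper sidesteps all of this: it reduces at once to $O_p(G)=1$ (the preimage of $O_p(G/O_p(G))$ being a normal $p$-subgroup), at which point $F(G)$ itself equals $\prod_{r\neq p}O_r(G)$; it shows $P$ centralizes each factor exactly as you do, and concludes immediately from $C_G(F(G))\le F(G)$. Your identification of Lemma \ref{cor:easy}(1) applied to the subgroups $O_r(G)$ as the crux is exactly right; the surrounding induction is unnecessary and, in its present form, breaks at the point indicated.
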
 
\begin{proof} 
Let $p$ be a prime such that $p> (2/\epsilon)^{6/\epsilon}$, and let $P$ be a Sylow $p$-subgroup of $G$. We need to show that $P\leq F(G)$. Suppose that this is false. Without loss of generality we may assume that $O_p(G)=1,$ so that $F(G)$ is a $p'$-subgroup. 

Write $F(G)=S_1\times\dots\times S_k,$ where $S_i$ is a Sylow $p_i$-supgroup of $F(G)$. Note that $S_i$ is normal in $G$ and it is contained in the Sylow $p_i$-subgroup  $Q_i$ of $G$ having the property that $\pr (P,  Q_i) \ge \epsilon.$ It  follows from Lemma \ref{lem:trivial} (2) that $\pr(P,S_i) \ge \epsilon$. 
In view of Corollary \ref{cor:easy} (1) we deduce that $[P, S_i]=1$.  Consequently, $P$ centralizes $F(G)$. A well-known property of finite soluble groups is that the centralizer of $F(G)$ is contained in $F(G)$. Thus, we have a contradiction, which proves the result.
\end{proof}

We will now prove Theorem \ref{main1} in the case of soluble groups.

\begin{proposition}\label{soluble}
Let $G$ be a finite soluble group  such that $\prp^* (G) \ge \epsilon >0$. 
 Then $F_2(G)$ has $\epsilon$-bounded index in $G$.
\end{proposition}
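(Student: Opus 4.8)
The goal is to bound $|G:F_2(G)|$ in terms of $\epsilon$ for a finite soluble group $G$ with $\prp^*(G)\ge\epsilon$. The plan is to reduce to studying the action of $G$ on $\bar G = G/F(G)$, where the key obstruction is that primes larger than $l=(2/\epsilon)^{6/\epsilon}$ are already harmless by Lemma \ref{lem:keysol}, so only the ``small'' primes can contribute to $F_2(G)$ being large. First I would set $\bar G = G/F(G)$ and recall that, by solubility, $C_G(F(G))\le F(G)$, so $\bar G$ acts faithfully on $F(G)$, and $F(\bar G)=F_2(G)/F(G)$. The aim is to show $|\bar G:F(\bar G)|$ is $\epsilon$-bounded together with $|F(\bar G)|$ being $\epsilon$-bounded, which gives the result since $|G:F_2(G)|=|\bar G:F(\bar G)|$.

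The central step I expect to carry out is a case split by prime size. By Lemma \ref{lem:keysol}, every Sylow $p$-subgroup with $p>l$ lies in $F(G)$, hence is trivial in $\bar G$; therefore $\pi(\bar G)\subseteq\{p : p\le l\}$ is a finite $\epsilon$-bounded set of primes. This already bounds the number of distinct primes dividing $|\bar G|$. Next, for each prime $p\le l$ dividing $|\bar G|$, I would control a Sylow $p$-subgroup $\bar P$ of $\bar G$ using the hypothesis together with Lemma \ref{lem:trivial*} (so that $\prp^*(\bar G)\ge\epsilon$) and Lemma \ref{lem:H_0}: for each other prime $q$, passing to the action on the relevant Sylow $q$-subgroup of $F(\bar G)$ (or of $F(G)$ lifted appropriately), one obtains a subgroup of $\bar P$ of $\epsilon$-bounded index centralizing, up to $\epsilon$-bounded index, each such Sylow factor. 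Combining over the $\epsilon$-boundedly many primes $q$, I get that a subgroup $\bar P_0\le\bar P$ of $\epsilon$-bounded index centralizes $F(\bar G)$ up to $\epsilon$-bounded displacement.

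The main obstacle, and the heart of the argument, is converting ``$\bar P$ almost centralizes $F(\bar G)$'' into a genuine index bound on $F_2(G)$ in $G$. Here I would invoke Lemma \ref{zero}: after arranging a coprime configuration $F(\bar G)\rtimes \bar P_0$ with $|F(\bar G):C_{F(\bar G)}(x)|$ bounded by an $\epsilon$-bounded constant $m$ for all $x$ in a generating set, Lemma \ref{zero} forces $[F(\bar G),\bar P_0]$ to have $\epsilon$-bounded order. Since $\bar G$ acts faithfully on $F(\bar G)$ and $C_{\bar G}(F(\bar G))\le F(\bar G)$, faithfulness of the action of $\bar P_0$ on $F(\bar G)$ combined with the bounded commutator forces $\bar P_0$ itself to have $\epsilon$-bounded order. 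Running this over each of the $\epsilon$-boundedly many primes $p\le l$ bounds $|\bar G|=|G:F(G)|$ restricted modulo $F(G)$, i.e.\ bounds $|\bar G|$, which in turn bounds $|G:F_2(G)|$ since $F_2(G)/F(G)=F(\bar G)\le\bar G$. The delicate point will be bookkeeping the coprimality needed for Lemma \ref{zero}: since $\bar P_0$ is a $p$-group and $F(\bar G)$ need not be a $p'$-group, I would first project to the $p'$-part $O_{p'}(F(\bar G))$ (handling the $p$-part of $F(\bar G)$ separately, as it is normalized by $\bar P$ and joins $\bar P$ inside a Sylow structure), and apply Lemma \ref{zero} to that coprime piece, iterating over primes to assemble the final $\epsilon$-bounded order of $\bar G$.
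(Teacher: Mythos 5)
Your reduction via Lemma \ref{lem:keysol} to the case where $\pi(\bar G)$ is an $\epsilon$-bounded set of small primes is exactly right, and Lemmas \ref{lem:H_0} and \ref{zero} together with coprime action are indeed the tools the paper uses. But the endgame has a genuine gap: you claim that, once $[F(\bar G),\bar P_0]$ has bounded order, ``faithfulness of the action of $\bar P_0$ on $F(\bar G)$'' forces $|\bar P_0|$, and hence $|\bar G|=|G:F(G)|$, to be $\epsilon$-bounded. That conclusion is false: the example in Section 4, just after Proposition \ref{soluble}, has $\prp^*(G)\ge 1/2$ while $|G:F(G)|=2^s$ is arbitrarily large (there $\bar G=G/F(G)$ is elementary abelian of order $2^s$, so $F(\bar G)=\bar G$ and $\bar P_0$ acts trivially, not faithfully, on $F(\bar G)$). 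The inclusion $C_{\bar G}(F(\bar G))\le F(\bar G)$ only says that whatever centralizes $F(\bar G)$ lies \emph{inside} $F(\bar G)$; since $O_p(\bar G)\le \bar P_0\cap F(\bar G)$ can be large, you cannot conclude that $\bar P_0$ is small, only that it is close to lying in $F(\bar G)$ --- which is the statement that actually needs proving. Your parenthetical treatment of the $p$-part of $F(\bar G)$ (``it joins $\bar P$ inside a Sylow structure'') does not resolve this.

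The paper's fix is precisely to kill the $p$-part before running the centralizer argument: after replacing $G$ by $G/F(G)$ (so all primes are small), for each of the $\epsilon$-boundedly many primes $p$ one passes to $G/O_p(G)$, where $O_p$ is trivial and hence $F$ is a $p'$-group. Then the coprimality needed for Lemma \ref{zero} is automatic, and --- this is the step you are missing --- $C_P(F(G))\le P\cap F(G)=1$, so the argument genuinely bounds $|P/O_p(G)|$ rather than $|P|$. Multiplying over the boundedly many primes bounds $|G:F(G)|$ for the renamed group, i.e.\ $|G:F_2(G)|$ for the original one. If you reorganize your argument to bound $|\bar P:\bar P\cap F(\bar G)|$ (equivalently, to work in $\bar G/O_p(\bar G)$) instead of $|\bar P_0|$, it becomes the paper's proof.
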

\begin{proof}
By Lemma \ref{lem:keysol}, if  $P$ is a Sylow $p$-subgroup of $G$ for a prime $p> (2/\epsilon)^{6/\epsilon},$  then $P$ is contained in $F(G)$. 
 Passing to the quotient over $F(G)$, we assume that the prime divisors of $|G|$ are smaller than $(2/\epsilon)^{6/\epsilon}$. In particular, the cardinality of $\pi(G)$ is  $\ep$-bounded. We will  show that under this assumption $F(G)$ has $\epsilon$-bounded index in $G$ (see also Remark \ref{oss} hereafter). This will imply the desired result. 

Let $p$ be a prime divisor of $|G|$ and let $P$ be a Sylow $p$-subgroup of $G$.

First assume that $O_p(G)=1$, in which case $F(G)$ is a $p'$-subgroup. 
Write
\[F(G)=S_1\times\dots\times S_k,\]
where $S_i$ is a Sylow $p_i$-subgroup of $F(G)$
 and $k$ is $\ep$-bounded. 
For each $p_i$, there exists a Sylow $p_i$-subgroup $Q_i$ of $G$ such that $\pr(P,Q_i) \ge \epsilon$. As $S_i$ is normal in $G$, it follows that $S_i\le Q_i$, therefore $\pr(P,S_i) \ge \epsilon$ by Lemma \ref{lem:trivial} (2). 
It follows from Lemma \ref{lem:H_0} (2) that there exists a subgroup $P_i$ of $P$ 
such that $|P:P_i| \le 2/\epsilon$ and $|S_i: C_{S_i}(x)| \le (2/\epsilon)^{6/\epsilon}$  for every $x \in P_i$. 
By Lemma \ref{zero},  
 the commutator subgroup
 $[S_i,P_i]$ has $\epsilon$-bounded order. 
Thus $C_i=C_{P_i}([S_i,P_i])$ has  $\epsilon$-bounded index in $P_i$. Moreover 
\[[S_i,C_i,C_i]\le[S_i,P_i,C_i]=1.\]
 As $C_i$ acts coprimely on $S_i$, it follows that $C_i$ centralizes $S_i$. Thus $|P_i:C_{P_i}(S_i)|$ is $\epsilon$-bounded, and therefore $|P:C_{P}(S_i)|$  is  $\epsilon$-bounded.
  Since  $k$ is $\ep$-bounded, we deduce that  $|P:C_{P}(F(G))|$  is  $\epsilon$-bounded. As $C_G(F(G))\le F(G)$ and $F(G)$ is a $p'$-group, it follows that $C_P(F(G))=1$. Thus, $P$ has $\epsilon$-bounded order.
  
Now we drop the assumption that $O_p(G)=1$ and apply the previous argument to $G/O_p(G)$. We deduce that the order of $P/O_p(G)$ is $\epsilon$-bounded. Since there are only $\epsilon$-boundedly many primes dividing $|G|$, it follows that the order of $G/F(G)$ is $\epsilon$-bounded. This concludes the proof.
\end{proof} 

\begin{remark}\label{oss}  The above proof shows that if  under the hypotheses of Proposition \ref{soluble} the order of $G$ is divisible by only $s$ primes, then $F(G)$ has $(\ep, s)$-bounded index in $G.$
\end{remark}

The following example shows that in Proposition \ref{soluble} the index of $F(G)$
can be arbitrarily large.

\begin{example}{\rm
Let $C_1, \dots, C_s$ be cyclic groups of different odd prime orders, and
let $H$ be the direct product of the groups $C_i.$ Let $A$ be the elementary
abelian group of order $2^s$ and choose a basis $a_1, \dots, a_s$ of $A.$ Let $G$ be 
the semidirect product of $H$ by $A,$ where for any $i, j$ the involution $a_i$
inverts the elements of
 $C_i$ and centralizes $C_j.$ Obviously, here the action of $A$ on $H$
is well-defined. It is easy to check that whenever $P$ and $Q$ are Sylow
subgroups of $G$ of coprime orders we have $\pr(P, Q) \ge 1/2.$ On the other
hand, $F(G)=H$ and $|G : H| = 2^s,$ which can be arbitrarily large.}\end{example}

Recall that the generalized Fitting subgroup $F^*(G)$ of a finite group
$G$ is the product of the Fitting subgroup $F(G)$ and all subnormal quasisimple subgroups; here a group is quasisimple if it is perfect and its
quotient by the centre is a nonabelian simple group. 

Now we are ready to prove Theorem \ref{main1}.

\begin{proof}[Proof of Theorem \ref{main1}]
Let $G$ be a finite group
such that $\prp^* (G) \ge \epsilon >0$. 
 We will show that the index of $F_2(G)$ in $G$ is $\ep$-bounded.

For the soluble radical $R=R(G)$ we have $\prp^* (R) \geq\epsilon$. Therefore,  by Proposition \ref{soluble}, the index of $F_2(R)$ in $R$ is $\ep$-bounded. Since $F_2(R) \le F_2(G)$, it is sufficient to prove that the order of $G/R$ is $\ep$-bounded. Note that  $\prp^*(G/R)\geq\epsilon$. We now assume that the soluble radical  $R$ of $G$ is trivial and our aim is to show that the order of $G$ is $\ep$-bounded. 
 
 As under our hypotheses the generalized Fitting subgroup $F^*(G)$ coincides with the socle of $G$, 
  we can apply Corollary \ref{cor:simple-product} to deduce that the order of  $F^*(G)$ is $\ep$-bounded. 
  Thus the index of $C_G(F^*(G))$ in $G$ is $\ep$-bounded, too. 
    Since $C_G(F^*(G)) \le F^*(G)$ (see, for instance, \cite[Theorem 9.8]{Isaacs}),  
		it follows that the order of $G$ is $\ep$-bounded. 
\end{proof}

We conclude with a proof of Theorem \ref{main2}.

\begin{proof}[Proof of Theorem \ref{main2}]
Let $G$ be a finite soluble group, and let  $\epsilon>0$ be a real number such that for any prime $p\in\pi(G)$ there is a Sylow $p$-subgroup $P$ and a Hall $p'$-subgroup $H$ of $G$ such that $\pr(P,H) \ge \epsilon$. We will show that the index of $F(G)$ in $G$ is $\ep$-bounded.

 We can assume that $G$ is not of prime power order. As any Hall $p'$-subgroup of $G$ contains a Sylow $q$-subgroup, it follows that $\prp^*(G) \geq\epsilon$.

Let $P$ be a Sylow $p$-subgroup of $G$. We wish to show that $P\cap F(G)$ has $\ep$-bounded index in $P$. Without loss of generality we may assume that  $O_p(G)=1$, in which case
 $F(G)$ is a $p'$-subgroup contained in any Hall $p'$-subgroup of $G$. It follows from Lemma \ref{lem:trivial} (2) that $\pr(P,F(G)) \ge \epsilon.$
By Lemma \ref{lem:H_0}  there exists a subgroup $P_0$ of $P$ 
such that $|P:P_0| \le 2/\epsilon$ and $|F(G): C_{F(G)}(x)| \le (2/\epsilon)^{6/\epsilon}$  for every $x \in P_0$. 
By Lemma \ref{zero},   the commutator subgroup
 $[F(G),P_0]$ has $\epsilon$-bounded order. 
Thus $C=C_{P_0}([F(G),P_0])$ has  $\epsilon$-bounded index in $P$. Moreover 
\[[F(G),C,C]\le[F(G),P_0,C]=1.\]
 As $C$ acts coprimely on $F(G)$, it follows that $C$ centralizes $F(G)$. Thus $|P:C_{P}(F(G))|$ is $\epsilon$-bounded. As $C_G(F(G))\le F(G)$ and $F(G)$ is a $p'$-group, it follows that $C_P(F(G))=1$ and so $P$ has $\epsilon$-bounded order. Thus, we have shown that for any $p\in\pi(G)$ and any Sylow $p$-subgroup $P$ the index of $P\cap F(G)$ in $P$ is $\ep$-bounded, say at most $m.$ 
In particular, $P \le F(G)$ for all primes $p>m$. 
  Since $m$ does not depend on $p$ and is $\ep$-bounded, it follows that the order of $G/F(G)$ is $\ep$-bounded, as required. 
\end{proof}

\end{document}